\documentclass[11pt,reqno]{amsart}
\usepackage[T1]{fontenc}
\usepackage{lmodern}
\usepackage[margin=1in]{geometry}
\usepackage{mathtools,amssymb,amsthm}
\usepackage{enumitem,needspace}
\usepackage[expansion=false]{microtype}
\usepackage{cite}
\usepackage[hidelinks]{hyperref}
\hypersetup{pdftitle={Convex Bernstein limits and sharp recovery on expanding intervals},
 pdfauthor={Matthew J. Colbrook and Catherine Drysdale}}
\numberwithin{equation}{section}
\newtheorem{theorem}{Theorem}[section]
\newtheorem{lemma}[theorem]{Lemma}
\newtheorem{proposition}[theorem]{Proposition}
\newtheorem{corollary}[theorem]{Corollary}
\theoremstyle{definition}
\newtheorem{example}[theorem]{Example}
\theoremstyle{remark}
\newtheorem{remark}[theorem]{Remark}
\newcommand{\R}{\mathbb R}
\newcommand{\C}{\mathbb C}

\newcommand{\Pn}{\mathcal P_n}
\newcommand{\B}{\mathcal B}
\newcommand{\Bc}{\mathcal B^{(+2)}}
\newcommand{\Ec}{E^{(+2)}}
\newcommand{\Ac}{A^{(+2)}}
\newcommand{\betac}{\beta^{(+2)}}
\newcommand{\norm}[2]{\left\|#1\right\|_{#2}}

\newcommand{\arcosh}{\operatorname{arcosh}}
\newcommand{\arsinh}{\operatorname{arsinh}}
\newcommand{\Real}{\operatorname{Re}}
\newcommand{\Imag}{\operatorname{Im}}

\newcommand{\dd}{\,\mathrm d}
\setlist[enumerate]{leftmargin=2.4em,itemsep=3pt,topsep=4pt}

\title[Convex Bernstein limits and sharp recovery]{Convex Bernstein limits and\\sharp recovery on expanding intervals}
\author[Matthew J. Colbrook]{Matthew J. Colbrook}
\address[Matthew J. Colbrook]{Department of Applied Mathematics and
Theoretical Physics, University of Cambridge, United Kingdom}
\email{m.colbrook@damtp.cam.ac.uk}
\author[Catherine Drysdale]{Catherine Drysdale}
\address[Catherine Drysdale]{School of Mathematical Sciences, Lancaster University,
United Kingdom}
\email{c.drysdale@lancaster.ac.uk}
\thanks{Corresponding author: Matthew J. Colbrook.}
\date{}
\subjclass[2020]{Primary 41A10, 41A29; Secondary 30D15, 41A17, 41A30}
\keywords{Convex approximation, Bernstein constants, entire functions of exponential type, expanding intervals, subexponential growth}

\begin{document}
\begin{abstract}
We determine when approximation by convex polynomials on expanding intervals converges to best approximation on the real line by convex entire functions of exponential type at most $\sigma>0$. For intervals of radius $(n-\gamma_n)/\sigma$, where $0\le\gamma_n<n$ and $\gamma_n=o(n)$, one margin sequence gives this limit for every
convex function of polynomial growth if and only if $\gamma_n/[n^{1/3}(\log n)^{2/3}]\to\infty$. When this condition fails, a fixed uniformly strictly convex entire function has approximation errors with unbounded limsup. We apply the limit theorem to prove existence of the convex Bernstein constant for $|x|^\lambda$ for every $\lambda\ge1$, and identify it with an attained entire-function minimum. We also give a sufficient margin
for growth bounded by $\exp(a|x|^\alpha)$, $0<\alpha<1$. For arbitrary subexponential growth on the real line, each target with finite entire-function error admits a suitable margin, whereas no single margin is sufficient for the whole class. Further results concern attainment, right continuity with respect to the exponential type,
weighted growth conditions, and the endpoints of fixed-fraction approximation intervals.
\end{abstract}
\maketitle

\section{Introduction and main results}\label{sec:intro}

Bernstein's limit theorem relates polynomial approximation of power
functions on a fixed interval to approximation on $\R$ by entire
functions of exponential type
\cite{Bernstein1914,Bernstein1938,Lubinsky2007}. We study the
corresponding limit problem under the convexity constraint. The main
question is how closely the approximation interval can approach the
critical radius $n/\sigma$ while preserving this relation for a whole
class of target functions.

For each integer $n\ge0$, let $\Pn$ denote the real algebraic
polynomials of degree at most $n$.
For a compact interval $I$ and a real continuous function $f$ on $I$, define
\begin{align}
 E_n(f;I)&=\inf_{p\in\Pn}\norm{f-p}{I},\label{eq:En}\\
 \Ec_n(f;I)&=\inf_{\substack{p\in\Pn\\p''\ge0\text{ on }I}}
                         \norm{f-p}{I},\label{eq:Econv}
\end{align}
where $\norm{\cdot}{I}$ is the uniform norm. The superscript $(+2)$ denotes
convexity. For $\sigma\ge0$, let $\B_\sigma$ consist of the entire functions
that are real on $\R$ and have exponential type at most $\sigma$: for every
$\varepsilon>0$,
\[
 |F(z)|\le C_\varepsilon e^{(\sigma+\varepsilon)|z|},\qquad z\in\C.
\]
The convex subclass is
$\Bc_\sigma=\{F\in\B_\sigma:F''\ge0\text{ on }\R\}$, and its best
whole-line approximation error is
\begin{equation}\label{eq:Ac}
 \Ac_\sigma(f)=\inf_{F\in\Bc_\sigma}\norm{f-F}{\R}\in[0,\infty].
\end{equation}
The unconstrained error $A_\sigma(f)$ is defined by replacing
$\Bc_\sigma$ with $\B_\sigma$. A finite value in \eqref{eq:Ac} does not
require $f$ itself to be bounded. In a polynomial-growth bound we always
take $C>0$ and $M\ge0$; stretched-exponential envelopes have $C,a>0$.

Uniform approximation of a convex function does not control the sign
of the approximating polynomial's second derivative. Nevertheless,
convex approximation preserves every positive algebraic rate available
without the constraint \cite{KLS2005,KLS2006,KLPS2011}. This comparison
bounds the normalized errors for power functions, but the existence of
their limit requires a further argument.

Gal asked whether $n^\lambda\Ec_n(|x|^\lambda;[-1,1])$ has a finite limit
for every $\lambda\ge1$, and raised a more general question about convex
approximation on intervals approaching $[-n/\sigma,n/\sigma]$
\cite{Gal2016}.
His note traces the first question to \cite[p.~325]{Gal2009}. The second
uses intervals $[-n(1-\lambda_n)/\sigma,n(1-\lambda_n)/\sigma]$ and refers
to \cite[p.~3]{Ganzburg2008} for conditions on $(\lambda_n)$.
For $\lambda=1$, Foucart and Powers
\cite[Sect.~4]{FoucartPowers2017} state the existence of the limit and
study it numerically. Theorem~\ref{thm:bernstein} proves existence for
every $\lambda\ge1$ and identifies the limit with an attained
entire-function minimum.

The central question of this paper is the recovery identity
\begin{equation}\label{eq:recovery}
 \lim_{n\to\infty}\Ec_n(f;[-R_n,R_n])=\Ac_\sigma(f),
 \qquad R_n=\frac{n-\gamma_n}{\sigma}.
\end{equation}
Unless stated otherwise, $\sigma>0$ is fixed and
\begin{equation}\label{eq:margin}
 0\le\gamma_n<n,\qquad \gamma_n=o(n).
\end{equation}
All logarithms are natural. Finitely many initial indices have no bearing
on the assertions below. Set
\begin{equation}\label{eq:Gn}
 G_n=n^{1/3}(\log n)^{2/3},\qquad c_0=\frac{2\sqrt2}{3}.
\end{equation}

\begin{theorem}[Sharp universal polynomial-growth recovery]\label{thm:main}
Under \eqref{eq:margin}, the following conditions are equivalent.
\begin{enumerate}[label=\textup{(\roman*)}]
\item $\gamma_n/G_n\to\infty$.
\item For every $F\in\Bc_\sigma$ satisfying
$|F(x)|\le C(1+|x|)^M$ on $\R$ for some $C>0$ and $M\ge0$,
$\Ec_n(F;[-R_n,R_n])\to0$.
\item Identity \eqref{eq:recovery} holds, in the extended sense, for every
continuous convex $f$ satisfying $|f(x)|\le C(1+|x|)^M$ on $\R$ for some
finite $C,M$.
\end{enumerate}
If \textup{(i)} fails, there is a single even, uniformly strictly convex
$F\in\Bc_\sigma$ of polynomial real-axis growth such that
$\Ac_\sigma(F)=0$ and $\limsup_n\Ec_n(F;[-R_n,R_n])=\infty$.
\end{theorem}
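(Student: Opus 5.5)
The argument runs through the cycle (i)$\Rightarrow$(ii)$\Rightarrow$(iii)$\Rightarrow$(i). The negative direction is proved by contraposition, and the same construction yields the single counterexample $F$ described in the final assertion. The threshold $G_n=n^{1/3}(\log n)^{2/3}$ should come from two facts. First, the Chebyshev-type extremal growth of a degree-$n$ polynomial just outside $[-R_n,R_n]$ is governed by the Airy/transition regime near the turning point $x\approx n/\sigma$, whose width is of order $n^{1/3}$. Second, a polynomial factor $(1+|x|)^M$ costs $M\log n$ in the exponent. Balancing $\gamma_n^{3/2}/n^{1/2}$, the decay exponent of an entire function of type $\sigma$ truncated to degree $n$ on $|x|\le (n-\gamma_n)/\sigma$, against $\log n$ gives $\gamma_n\asymp n^{1/3}(\log n)^{2/3}$, with constant $c_0=2\sqrt2/3$ coming from $\int_0^{t}\sqrt{2s}\,ds$.

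\emph{(i)$\Rightarrow$(ii).} Given $F\in\Bc_\sigma$ with polynomial growth, I would first damp it slightly. Replace $F$ by $F_\delta(x)=F((1-\delta)x)+\eta x^2$-type corrections, or convolve $F$ with a nonnegative kernel of small type. This moves it into $\Bc_{\sigma(1-\delta)}$ while keeping $F''\ge0$; if needed, add a tiny multiple of a strictly convex polynomial so that $F''$ has a positive margin on $[-R_n,R_n]$. Next approximate $F_\delta$ by polynomials. Multiply by a sinc-type factor $\bigl(\sin(\epsilon x/N)/(\epsilon x/N)\bigr)^{N}$ to obtain real-axis decay, then take the partial sums of the de la Vallée Poussin / Taylor-type expansion of degree $n$. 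The point is to estimate the error in $C^2([-R_n,R_n])$, not merely in the uniform norm. Once the second-derivative error is below the convexity margin, the approximant is convex on the interval. The uniform error, and the correction needed to restore convexity, must be shown to be $O(n^{M+2}\exp(-c\gamma_n^{3/2}n^{-1/2}))$. This tends to $0$ exactly when $\gamma_n/G_n\to\infty$. Finally let $\delta,\eta\to0$. The main obstacle is this sharp uniform estimate near the endpoints $\pm R_n$. I would obtain it from a saddle-point/Airy asymptotic for the Chebyshev or Hermite-type kernel on the interval $[-n/\sigma,n/\sigma]$, rather than from crude Bernstein-ellipse bounds, which lose the $n^{1/3}$ scale.

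\emph{(ii)$\Rightarrow$(iii).} The inequality $\liminf\Ec_n\ge\Ac_\sigma(f)$ is a compactness argument. Take near-optimal convex polynomials $p_n$ on $[-R_n,R_n]$ with bounded errors. Markov–Bernstein inequalities on the growing intervals give local uniform bounds, and a normal-families argument (degree $n$ on radius $\sim n/\sigma$ forces type $\le\sigma$ in the limit) extracts a limit $F\in\Bc_\sigma$ with $\|f-F\|_\R\le\liminf\Ec_n$. Convexity passes to the limit. For the inequality $\limsup\le\Ac_\sigma(f)$, pick $F\in\Bc_\sigma$ with $\|f-F\|_\R\le\Ac_\sigma(f)+\varepsilon$. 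This $F$ inherits polynomial growth from $f$. Then apply (ii) to $F$ and use the triangle inequality. If $\Ac_\sigma(f)=\infty$, only the lower bound is needed.

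\emph{(iii)$\Rightarrow$(i) and the counterexample.} Suppose $\gamma_{n_k}\le K G_{n_k}$ along a subsequence. I would build an even $F=\sum_j c_j\Phi_j$, where each $\Phi_j\in\Bc_\sigma$ is a convex entire function of type $\sigma$ concentrated near $|x|\approx n_{k_j}/\sigma$, for instance a shifted squared sinc plus a small $x^2$, so that $F''\ge c>0$ uniformly. The lower bound comes from a duality/extremal argument. A convex polynomial $p$ of degree $n$ with $|F-p|\le E$ on $[-R_n,R_n]$ must have $|p|$ at most polynomially large there. On the other hand, $F$ matches a type-$\sigma$ oscillation at a scale where the Airy-regime extremal inequality forces $E\gtrsim n^{-M}\exp(c\gamma_n^{3/2}n^{-1/2})\cdot(\text{amplitude})$. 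Choosing the amplitudes $c_j$ to decay only polynomially makes $E_{n_{k_j}}\to\infty$, while $F$ keeps polynomial growth and $\Ac_\sigma(F)=0$ because $F\in\Bc_\sigma$. This contradicts (iii) applied to $f=F$. This lower-bound step, which requires a matching sharp constant $c_0$, is the second delicate point. I would handle it through the same Airy asymptotics used in the first step, applied to the dual functional.
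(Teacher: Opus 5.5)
\textbf{Verdict: the proposal has genuine gaps in both hard directions.} Your (ii)$\Rightarrow$(iii) step essentially matches the paper: compactness for the liminf, the triangle inequality for the limsup. One detail needs repair. Markov--Bernstein bounds cannot give local boundedness in $\C$ here, because $\|p_n\|_{[-R_n,R_n]}$ grows like $n^M$. The paper instead applies the maximum principle to $p_n(z)/\bigl((z\pm i)^s\Phi_{R_n}(z)^n\bigr)$ in each half-plane.

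\emph{The gap in (i)$\Rightarrow$(ii).} Your damping step fails. For fixed $\delta$, $F((1-\delta)x)$ is far from $F$ on an interval of radius $\sim n/\sigma$. For $F(x)=x^2+\cos x$ the difference is about $2\delta x^2$, of size $\delta n^2$. So $\delta$ cannot be sent to $0$ at the end, and a $\delta_n$ small enough to control this gains nothing in the critical radius. Similarly, a sinc factor of fixed total type $\epsilon$ is far from $1$ on $[-R_n,R_n]$ and raises the type. That moves the critical radius by an amount of order $n$, whereas the margin is $o(n)$.

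The missing ingredient is a growth bound for $F''$ off the real axis with exponential coefficient exactly $\sigma$ and only polynomial loss: $|F''(z)|\le C(1+|z|)^Me^{\sigma|\Imag z|}$. The paper gets it from the Phragm\'en--Lindel\"of half-plane estimate applied to $F(z)\bigl(\sin(bz)/(bz)\bigr)^k$, with $b\asymp(1+|z|)^{-1}$ chosen separately at each point $z$. The extra type $kb$ then costs only a constant factor there.

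With this bound the Bernstein ellipse is not crude. Take $\mathcal E(R_n,t_n)$ with $\cosh t_n=(n-2)/(n-\gamma_n)$. On it, $|z|\le(n-2)/\sigma$ and $\sigma|\Imag z|\le\sqrt{(n-2)^2-(n-\gamma_n)^2}$. Truncate the Chebyshev series of $F''$ at degree $n-2$, add its uniform error, and integrate twice. This gives $\Ec_n(F;[-R_n,R_n])\le Cn^{M+2}(e^{t_n}-1)^{-1}e^{-D_n}$ with $D_n\sim c_0\gamma_n^{3/2}/\sqrt n$. So your claim that ellipse bounds lose the $n^{1/3}$ scale is wrong. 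The Airy-kernel analysis you propose instead is never carried out, so as written the upper bound is not established.

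\emph{The gap in the necessity direction.} Your displayed inequality has the wrong sign: larger margins must decrease the error, not increase it. The bump construction also targets the wrong mechanism. The transition you invoke is $R\approx m$ for the Bessel function $J_m(R)$, a relation between radius and degree, not a spatial location. Near $|x|\approx n/\sigma$, degree-$n$ polynomials on $[-R_n,R_n]$ have their greatest resolution. A squared sinc of type $\sigma$ has a triangular spectrum vanishing at $\pm\sigma$. Nothing in your sketch therefore yields a lower bound.

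The obstruction is a global edge-frequency oscillation with polynomially growing amplitude. For $\sigma=1$, the paper tests $x^{2k}\cos x$ against $g\mapsto\frac1{2\pi}\int_{-\pi}^{\pi}g(R\sin\theta)e^{im\theta}\dd\theta$, with $m$ even and $m>n$. This functional has norm at most one, annihilates $\Pn$, and returns $\pm R^{2k}J_m^{(2k)}(R)$. Shifting the contour to the saddle gives $J_m^{(q)}(R)\sim(\sinh t)^qe^{-\Phi}/\sqrt{2\pi S}$. At $R=n-BG_n$, which is at most $R_n$ along a subsequence when (i) fails, the error is at least of order $S^{2k-1/2}e^{-\Phi}$. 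Here $S\asymp n^{2/3}(\log n)^{1/3}$ and $e^{-\Phi}=n^{-c_0B^{3/2}+o(1)}$.

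No matching sharp constant is needed. Since the growth order in (ii) is unrestricted, one takes $k$ with $(4k-1)/3>c_0B^{3/2}$. A single fixed target then suffices, with no series over a subsequence. Convexity comes from adding $(2k+1)(1+x^2)^{k+1}$, which gives $F''\ge(4k+1)(1+x^2)^k$. This term leaves unconstrained degree-$n$ errors unchanged for $n\ge2k+2$, and $\Ec_n\ge E_n$ finishes the argument.
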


The same margin sequence must work for \emph{all finite growth orders}:
the exponent $M$ is fixed for each target and may vary across the class.
This quantifier determines the scale in Theorem~\ref{thm:main}.
In terms of the relative margin $\delta_n=\gamma_n/n$, the condition is
\[
 \delta_n\to0,\qquad
 \frac{n\delta_n^{3/2}}{\log n}\to\infty.
\]

Example~\ref{ex:quartic} gives a counterexample of quartic growth when
$\gamma_n=G_n$.

For power functions, homogeneity converts recovery on expanding
intervals into a limit on $[-1,1]$. Together with the algebraic-rate
bound for convex approximation, this gives the following result.

\begin{theorem}[Convex Bernstein constants]\label{thm:bernstein}
For every $\lambda\ge1$,
\begin{equation}\label{eq:bernstein}
 \betac_\lambda:=\lim_{n\to\infty}
 n^\lambda\Ec_n(|x|^\lambda;[-1,1])
 =\min_{F\in\Bc_1}\norm{|x|^\lambda-F}{\R}<\infty.
\end{equation}
An extremizer may be chosen even. The constant is zero precisely when
$\lambda$ is a positive even integer, and is positive otherwise.
\end{theorem}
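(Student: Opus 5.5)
Fix $\lambda\ge1$ and put $f(x)=|x|^\lambda$, which is continuous, convex and of polynomial growth. The plan is to deduce Theorem~\ref{thm:bernstein} from Theorem~\ref{thm:main} using homogeneity, with an interpolation argument in the radius.

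\emph{Step 1: homogeneity.} If $p\in\Pn$ and $q(x)=t^{\lambda}p(x/t)$, then $q\in\Pn$, and convexity on $[-1,1]$ is equivalent to convexity on $[-t,t]$. Hence
\[
 \Ec_n(f;[-t,t])=t^\lambda\,\Ec_n(f;[-1,1]).
\]
Taking $t=n$ gives $n^\lambda\Ec_n(f;[-1,1])=\Ec_n(f;[-n,n])$. Theorem~\ref{thm:main}, however, only treats $R_n=(n-\gamma_n)/\sigma$ with $\gamma_n/G_n\to\infty$. So take $\sigma=1$ and, for instance, $\gamma_n=n^{1/2}$. Then
\[
 \Ec_n(f;[-n,n])=\Bigl(\frac{n}{n-\gamma_n}\Bigr)^{\lambda}\Ec_n(f;[-R_n,R_n]),
\]
and the prefactor tends to $1$. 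Theorem~\ref{thm:main}(iii) gives $\Ec_n(f;[-R_n,R_n])\to\Ac_1(f)$ in the extended sense. Therefore $n^\lambda\Ec_n(f;[-1,1])\to\Ac_1(f)$, provided $\Ac_1(f)<\infty$. This step is essentially exact; homogeneity absorbs the margin.

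\emph{Step 2: finiteness.} The known algebraic-rate results \cite{KLS2005,KLS2006,KLPS2011} show that convex approximation keeps the unconstrained rate. For $|x|^\lambda$ the unconstrained rate is $E_n=O(n^{-\lambda})$ by Bernstein. Hence $\sup_n n^\lambda\Ec_n(f;[-1,1])<\infty$. Combined with the limit from Step 1, this gives $\Ac_1(f)<\infty$. Alternatively, one could construct an explicit convex $F\in\Bc_1$ with $|x|^\lambda-F$ bounded, such as a convolution of $|x|^\lambda$ with a nonnegative kernel of type $1$. The rate argument is simpler.

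\emph{Step 3: attainment and evenness.} Take a minimizing sequence $F_k\in\Bc_1$ with $\norm{f-F_k}{\R}\to\Ac_1(f)$. These functions are bounded on $\R$ by $|x|^\lambda+C$. By the Phragmén--Lindelöf bound for functions of exponential type with polynomial growth on $\R$, they are locally uniformly bounded on $\C$. Montel's theorem then gives a locally uniform limit $F$. This limit is still of type at most $1$, still convex on $\R$, and satisfies $\norm{f-F}{\R}\le\Ac_1(f)$, so the infimum is a minimum. If $F$ is a minimizer, then $F(-x)$ is also one, and by convexity of the constraint set and of the norm the average $\tfrac12(F(x)+F(-x))$ is an even minimizer.

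\emph{Step 4: when the constant vanishes.} If $\lambda=2m$ is a positive even integer, then $x^{2m}\in\Bc_1$ and the constant is zero. Conversely, suppose the constant is zero. The attained minimum gives $F\in\B_1$ with $F=|x|^\lambda$ on $\R$. Then $|x|^\lambda$ would be real-analytic at $0$, which forces $\lambda$ to be a nonnegative even integer, and $\lambda\ge1$ excludes $0$.

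I expect the main obstacle to be Step 3, specifically the compactness argument: one needs uniform control off the real axis for a family bounded by $|x|^\lambda+C$ on $\R$. I would handle it by applying the Phragmén--Lindelöf bound to $F_k(z)/(z+i)^{\lceil\lambda\rceil+1}$ in the upper and lower half-planes.
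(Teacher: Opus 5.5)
Your proposal is correct and follows essentially the paper's proof: the Bernstein/Kopotun--Leviatan--Shevchuk rate bound, Theorem~\ref{thm:main} at $\sigma=1$ with $\gamma_n=\sqrt n$ plus homogeneity, attainment by a half-plane compactness argument, symmetrization for an even extremizer, and analyticity at the origin for the vanishing case. The differences are minor: you get $\Ac_1(|x|^\lambda)<\infty$ from the extended-sense form of Theorem~\ref{thm:main}(iii), whose infinite case rests on the same Lemma~\ref{lem:compactness} that the paper applies directly to the rescaled near-minimizers, and you prove attainment directly via $F_k/(z\pm i)^s$ instead of citing Proposition~\ref{prop:attainment}.
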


The size of the required margin also depends on the growth allowed
along the real axis. We next consider a fixed stretched-exponential
envelope and then the full class of subexponential growth:
\begin{align}
 |f(x)|&\le C e^{a|x|^\alpha},\qquad 0<\alpha<1,
                         \label{eq:stretch-growth}\\
 \text{for every }\varepsilon>0,
 \quad |f(x)|&\le C_\varepsilon e^{\varepsilon|x|}.
                         \label{eq:subexp-growth}
\end{align}
Condition \eqref{eq:subexp-growth} is equivalent to
$\log^+|f(x)|=o(|x|)$ as $|x|\to\infty$. Both conditions concern
the restriction of the target to $\R$; its entire approximants may have
positive exponential type.

\Needspace{8\baselineskip}
\begin{theorem}[Stretched-exponential recovery]\label{thm:stretch}
Let $0<\alpha<1$, and suppose that \eqref{eq:margin} holds and
\begin{equation}\label{eq:stretch-margin}
 \frac{\gamma_n}{n^{(1+\alpha)/(3-\alpha)}}\longrightarrow\infty.
\end{equation}
Then \eqref{eq:recovery} holds, in the extended sense, for every continuous
convex $f$ satisfying \eqref{eq:stretch-growth}.
\end{theorem}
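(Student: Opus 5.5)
We split \eqref{eq:recovery} into a lower and an upper inequality, as for Theorem~\ref{thm:main}; only the upper inequality uses the margin \eqref{eq:stretch-margin}.

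\textbf{Lower bound.} We show $\liminf_n\Ec_n(f;[-R_n,R_n])\ge\Ac_\sigma(f)$ by compactness. Take convex $p_n\in\Pn$ with $\norm{f-p_n}{[-R_n,R_n]}\le L+o(1)$, where $L$ is the liminf (if $L=\infty$ there is nothing to prove). Then:
\begin{enumerate}
\item Since $|f(x)|\le Ce^{a|x|^\alpha}$, the $p_n$ are locally uniformly bounded on compact subsets of $\R$.
\item Bernstein-type (Markov--Nikolskii) estimates on $[-R_n,R_n]$, with $n/R_n\to\sigma$, give growth bounds on compact subsets of $\C$ of the form $e^{(\sigma+\varepsilon)|z|}$ times a subexponential factor.
\item Hence a subsequence converges locally uniformly to an entire $F$ of type at most $\sigma$.
\item Convexity passes to the limit, so $F\in\Bc_\sigma$, and $\norm{f-F}{\R}\le L$.
\end{enumerate}
This is the same argument as in the polynomial-growth case. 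Subexponential growth of $f$ suffices, because the Phragmén--Lindelöf type control only needs $\log^+|f|=o(|x|)$.

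\textbf{Upper bound.} Assume $\Ac_\sigma(f)<\infty$ and fix $F\in\Bc_\sigma$ with $\norm{f-F}{\R}\le\Ac_\sigma(f)+\varepsilon$. Then $F$ inherits the envelope: $|F(x)|\le C'e^{a|x|^\alpha}$. It suffices to show $\Ec_n(F;[-R_n,R_n])\to0$. We do this in three steps.
\begin{enumerate}
\item \emph{Type reduction.} Replace $F$ by a convex entire function of type strictly less than $\sigma$, namely $(1-\eta)$ dilation: $F_\eta(x)=F((1-\eta)x)$. Since $f-F$ is bounded and $F$ is convex, $\norm{F-F_\eta}{}$ is not small on $\R$ globally. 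So instead multiply by a convex-preserving damping, or use the convolution $F*K_\tau$ with a nonnegative kernel whose Fourier transform has support in $[-\tau,\tau]$. Convolution with a nonnegative kernel preserves convexity and type increases only by $\tau$. The paper's type-right-continuity result presumably allows $\sigma+\tau$ to be traded back by working with $\sigma'=\sigma-\tau$ inside; we use the earlier reduction exactly as in the proof of Theorem~\ref{thm:main}.
\item \emph{Truncated Taylor/Chebyshev approximation.} Approximate the type-$<\sigma$ function by polynomials of degree $n$ on $[-R_n,R_n]$. Near the critical radius, the error of degree-$n$ approximation to $e^{i\sigma' x}$-type functions on an interval of radius $(n-\gamma)/\sigma$ behaves like $\exp(-c\,\gamma^{3/2}/n^{1/2})$; this is the Airy-type transition underlying $c_0=2\sqrt2/3$ and $G_n$. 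Multiplying by the real-axis envelope evaluated at $R_n\asymp n$ gives an error bounded by
\[
\exp\bigl(aR_n^\alpha-c_0\gamma_n^{3/2}n^{-1/2}(1+o(1))\bigr).
\]
This tends to zero exactly when $\gamma_n^{3/2}n^{-1/2}\gg n^\alpha$, i.e.\ $\gamma_n\gg n^{(1+2\alpha)/3}$.
\item \emph{Restoring convexity.} The approximant's second derivative may be slightly negative. We fix this by adding a small multiple of a convex polynomial, e.g.\ $\kappa(x/R_n)^2$, which costs $\kappa$ in sup norm. This requires the second-derivative error to be $o(R_n^{-2})$.
\end{enumerate}

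\textbf{Main obstacle.} The hardest step is the sharp error estimate in step 2 of the upper bound, uniformly in the weighted envelope. The margin \eqref{eq:stretch-margin}, with exponent $(1+\alpha)/(3-\alpha)$, is weaker than the $(1+2\alpha)/3$ above. This suggests the envelope should be used more efficiently: split the interval at $|x|\sim\rho$ and balance the local growth $e^{a\rho^\alpha}$ against a position-dependent decay. Near the endpoints, where $|x|$ is close to $R_n$, the relevant quantity is the local margin $n-\sigma|x|$, not $\gamma_n$ alone. Optimizing $a\rho^\alpha$ against $(n-\sigma\rho)^{3/2}n^{-1/2}$, and adding the cost of the derivative control for the convexity correction, should produce $n^{(1+\alpha)/(3-\alpha)}$. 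I would first try a weighted polynomial approximation (a Freud-type weight $e^{-a|x|^\alpha}$) and read off the threshold from this balance.
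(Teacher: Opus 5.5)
The genuine gap is in the upper bound: your argument does not reach the margin \eqref{eq:stretch-margin}. The estimate in Step~2 (real-axis envelope at the endpoint times $e^{-c_0\gamma_n^{3/2}/\sqrt n}$) gives recovery only when $\gamma_n^{3/2}/\sqrt n\gg n^\alpha$, i.e.\ $\gamma_n\gg n^{(1+2\alpha)/3}$. This is strictly stronger than the hypothesis, because $(1+2\alpha)(3-\alpha)-3(1+\alpha)=2\alpha(1-\alpha)>0$. Your proposed remedy does not close the gap. The function $\rho\mapsto a\rho^\alpha-c(n-\sigma\rho)^{3/2}n^{-1/2}$ is increasing on $[0,R_n]$, so the balance sits at $\rho=R_n$ and returns the same threshold; the endpoints are not where the loss occurs.

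The missing idea is the shape of the Bernstein ellipse. Work with $F''$ on $\mathcal E(R_n,t_n)$, $t_n=\arcosh\bigl((n-2)/(n-\gamma_n)\bigr)$, whose semiaxes are $U=(n-2)/\sigma$ and $V=R_n\sinh t_n\asymp\sqrt{n\gamma_n}$. Use the type-preserving bound $|F''(x+iy)|\le C\exp(\sigma|y|+d|x|^\alpha+d|y|^\alpha)$ from Lemma~\ref{lem:exactgrowth}(ii). On the ellipse $\sigma|y|\le\sigma V-\sigma Vx^2/(2U^2)$, so horizontal growth is paid for by the loss of height:
\[
 \sup_{u\ge0}\Bigl(du^\alpha-\frac{\sigma V}{2U^2}u^2\Bigr)
 \asymp\Bigl(\frac{U^2}{V}\Bigr)^{\alpha/(2-\alpha)}
 \asymp\Bigl(\frac{n^3}{\gamma_n}\Bigr)^{\alpha/[2(2-\alpha)]}.
\]
The supremum is attained at $|x|\asymp(U^2/V)^{1/(2-\alpha)}=o(n)$, which is of order $n^{2/(3-\alpha)}$ at the threshold. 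This term and $dV^\alpha\asymp(n\gamma_n)^{\alpha/2}$ are $o(\gamma_n^{3/2}/\sqrt n)$ exactly when $\gamma_n/n^{(1+\alpha)/(3-\alpha)}\to\infty$. The main exponent is $(n-2)t_n-\sigma V=D_n\sim c_0\gamma_n^{3/2}/\sqrt n$, which also absorbs the $O(\log n)$ prefactors. This is Proposition~\ref{prop:stretch-upper}; the triangle inequality over finite-error competitors then gives the limsup bound.

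Several other steps also need repair.

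\textbf{Step~1.} Drop it. It is multiplication, not convolution, that adds $\tau$ to the type. Your plan also needs left continuity, whereas right continuity (Proposition~\ref{prop:right-continuity}) concerns $\tau\downarrow\sigma$, not $\tau\uparrow\sigma$. More decisively, no fixed reduction of type at small uniform cost can exist in general. It would give $\Ac_{\sigma-}(f)=\Ac_\sigma(f)$, hence by Theorem~\ref{thm:critical} recovery with $\gamma_n=0$. But the functions $F_k$ in the proof of Theorem~\ref{thm:main} satisfy $\Ac_1(F_k)=0$ and $\Ac_{1-}(F_k)=\infty$. That proof performs no such reduction; it keeps the coefficient $\sigma$ of $|\Imag z|$ throughout.

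\textbf{Step~3.} It presupposes a bound on $(F-p)''$ that Step~2 does not supply. Instead, truncate the Chebyshev series of $F''$ at degree $n-2$ and integrate twice as in Lemma~\ref{lem:repair}, losing only a factor $O(n^2)$.

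\textbf{Lower bound.} Bernstein--Walsh with $\norm{p_n}{[-R_n,R_n]}\lesssim e^{aR_n^\alpha}$ does not give local boundedness off the real axis. You must divide by $D_+(z)\Phi_{R_n}(z)^n$, with the zero-type outer factor $D_+(z)=\exp\bigl(d(1-iz)^\alpha\bigr)$, as in Lemma~\ref{lem:compactness}. Your claim that $\log^+|f|=o(|x|)$ suffices is unjustified, since such a factor needs a convergent logarithmic integral. For $\alpha<1$ that integral converges, so this does not affect the present theorem.
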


\begin{theorem}[Subexponential dichotomy]\label{thm:dichotomy}
Fix $\sigma>0$.
\begin{enumerate}[label=\textup{(\roman*)}]
\item Every continuous convex $f$ satisfying \eqref{eq:subexp-growth} and
$\Ac_\sigma(f)<\infty$ admits a sequence \eqref{eq:margin} for which
\eqref{eq:recovery} holds. The sequence can be chosen with
$\gamma_n\to\infty$ and $\gamma_n/n$ nonincreasing to zero, but it may
depend on $f$ and $\sigma$.
\item For every prescribed sequence \eqref{eq:margin}, there is a single
even real entire function $F$ of type exactly $\sigma$, satisfying
\eqref{eq:subexp-growth} and $F''\ge1$ on $\R$, such that
\[
 \Ac_\sigma(F)=0,\qquad
 \limsup_{n\to\infty}\Ec_n(F;[-R_n,R_n])=\infty.
\]
\end{enumerate}
\end{theorem}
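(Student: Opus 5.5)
We have to prove Theorem~\ref{thm:dichotomy}. Part (i) uses a diagonal argument and part (ii) uses a gluing construction. The common basic estimate is a quantitative \emph{transfer lemma}, which is the same mechanism as in the proofs of Theorems~\ref{thm:main} and \ref{thm:stretch}, and which we take as available. It says the following. Let $F\in\Bc_{\sigma'}$ with $\sigma'<\sigma$, or with type $\sigma$ after dilation by $(1-\eta)$. Then $F$ can be approximated on $[-R,R]$ by convex polynomials of degree $n$. The error is bounded by a term of the form $\sup_{|x|\le R+\rho}|F(x)|\cdot\exp(-c\,\gamma^{3/2}/n^{1/2})$, plus a convexity correction of the same size. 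The correction comes from adding a small multiple of $x^2$ (uniform strict convexity of the dilated or regularised target absorbs this). The key input is this: the de la Vallée Poussin/Bernstein-type truncation of an entire function of type $\sigma$ converges on $[-(n-\gamma)/\sigma,(n-\gamma)/\sigma]$ with a Bessel/Airy-type loss $\exp(-c_0\gamma^{3/2}/n^{1/2})$, and it controls second derivatives in the same way.

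\textbf{Part (i).} Fix $f$ with $A:=\Ac_\sigma(f)<\infty$ and $\varepsilon_k\downarrow0$.
\begin{enumerate}
\item Pick $F_k\in\Bc_\sigma$ with $\|f-F_k\|_\R<A+\varepsilon_k$.
\item Replace $F_k$ by $F_k(\cdot(1-\eta_k))+\eta_k x^2$-type corrections. Since $f$ is subexponential, $F_k$ is as well on $\R$, up to a bounded error. A Phragmén--Lindelöf argument shows that the dilation changes the sup-error only by $o(1)$ on $\R$. I expect this step to need care, because $F_k$ is not bounded. I would work with $F_k-f$ being bounded and use the modulus of continuity of convex $f$, instead of adding $x^2$, which destroys boundedness. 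As an alternative, I would add a convex bounded-error perturbation such as $\eta\sqrt{1+x^2}$-type entire substitutes of type $\le\eta$.
\item For each $k$, the growth $\log^+|F_k(x)|=o(|x|)$ on $\R$ and type $<\sigma$ give some $N_k$ and some margin $\gamma^{(k)}_n=\theta_k n$ such that $\Ec_n(F_k;[-R_n,R_n])<\varepsilon_k$ for $n\ge N_k$.
\item Choose $N_k$ increasing and define $\gamma_n$ piecewise as a decreasing fraction $\theta_k n$ on $[N_k,N_{k+1})$. Adjust so that $\gamma_n/n$ is nonincreasing and $\gamma_n\to\infty$, which is possible since $\theta_k n\ge\theta_k N_k\to\infty$ if $N_k$ grows fast.
\item Then $\limsup\Ec_n(f)\le A$. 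The matching $\liminf\ge A$ is the general lower bound: a normal-families argument extracts a convex entire limit of type $\le\sigma$ from near-optimal polynomials on expanding intervals with $\gamma_n=o(n)$. I take this lower bound as already established in the paper for Theorem~\ref{thm:main}(iii).
\end{enumerate}

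\textbf{Part (ii).} Given $\gamma_n=o(n)$, build $F(x)=x^2+\sum_j c_j\,\Phi(\sigma_j x)$, where the $\Phi$ are even entire functions. Each $\Phi$ is convex on $\R$, for instance $\Phi(x)=\cosh$-like functions of type $1$ damped to be subexponential on $\R$, such as $\left(\frac{\sin(x/m)}{x/m}\right)^{2m}$ integrated twice. We take types $\sigma_j\uparrow\sigma$ and $c_j$ rapidly decreasing, so that $F$ has type exactly $\sigma$, $F''\ge 1$, and $F$ is subexponential on $\R$. Then $\Ac_\sigma(F)=0$ holds trivially. For the lower bound we use the necessity mechanism of Theorem~\ref{thm:main}. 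A convex degree-$n$ polynomial near $F$ on $[-R_n,R_n]$ forces, by a Bernstein/Remez-type extrapolation off the interval, the term $c_j\Phi(\sigma_j x)$ to be reproduced. Here $j$ is chosen with the effective growth of that term at scale $R_n$ exceeding $\exp(C\gamma_n^{3/2}/n^{1/2})$, which is possible since $\gamma_n=o(n)$ implies $\gamma_n^{3/2}/n^{1/2}=o(n)$. Growth of size $e^{o(|x|)}$ at a scale large compared with $R_n$ can still beat this. Choosing a subsequence $n_j$ and $c_j$ inductively, each new term cannot spoil earlier ones, and $\Ec_{n_j}\ge j$.

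The main obstacle is the lower bound in (ii): a quantitative statement that a convex polynomial cannot approximate a function whose real-axis size at $x\approx R_n(1+\gamma_n/n)$ is large. I would try first to reuse the extremal Chebyshev-type comparison from the counterexample in Theorem~\ref{thm:main}, with the growth exponent varying with $j$.
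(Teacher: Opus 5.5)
Your part (ii) has a genuine gap, and the construction as written cannot succeed. The blocks you name (twice-integrated powers of $\sin(x/m)/(x/m)$) grow at most linearly on $\R$. With rapidly decreasing $c_j$, Lemma~\ref{lem:exactgrowth}(i) applied to each block gives $|F(z)|\le C(1+|z|)^2e^{\sigma|\Imag z|}$. Thus $F\in\Bc_\sigma$ has polynomial real-axis growth, and Theorem~\ref{thm:main} gives $\Ec_n(F;[-R_n,R_n])\to0$ for every margin with $\gamma_n/G_n\to\infty$, e.g.\ $\gamma_n=\sqrt n$.

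More fundamentally, no lower-bound mechanism is supplied. Approximation on $[-R_n,R_n]$ imposes nothing outside the interval, so neither extrapolation nor the size of $F$ near $R_n(1+\gamma_n/n)$ can force a large error. Inside the interval, size gives only an upper bound, since $0$ is a convex competitor. Reusing $x^{2k}\cos x$ with $k$ varying does not repair this. For each fixed $k$ these terms are recovered once $\gamma_n/G_n\to\infty$. Their coefficients $(-1)^kR^{2k}J_m^{(2k)}(R)$ also alternate in sign, so an infinite superposition needs control of cancellation that the fixed-$q$ saddle estimate does not provide.

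The paper's proof rests on three ingredients absent from your plan:
\begin{enumerate}[label=(\alph*)]
\item The Chebyshev-coefficient functional, with $L_{m,R}(\cosh((a+i)x))=I_m((a+i)R)$ for even $m$ and $\Real\eta(i+a)>0$. This makes $E_n^{\C}(\cosh((a+i)x);[-R_n,R_n])$ grow exponentially whenever $R_n/n\to1$ (Lemma~\ref{lem:complex-obstruction}).
\item A Baire category argument in the Fr\'echet space of real entire functions of exponential type zero. It uses the seminorms $E_n(H_e\cos)+E_n(H_o\sin)$, tested on Taylor sections of $e^{az}$, and replaces any explicit gluing. It produces one type-zero amplitude $h$ for which $h\cos$ or $h\sin$ has unbounded errors at the prescribed margin.
\item Convexity imposed afterwards rather than built into the blocks. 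One adds the type-zero $Q$ with $Q''=6B$, where $B\ge1$ is an even type-zero majorant with $|k''|\le5B$. Since $Q$ has exponentially small errors on $[-R_n,R_n]$, $F=Q+k$ keeps the unbounded errors and satisfies $F''\ge1$.
\end{enumerate}

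Part (i) follows the paper's block construction with fixed fractions $\rho_j\uparrow1$, but two steps need correcting. Step 2 is unnecessary and, as you suspect, harmful: dilating $F_k$ generally makes $\norm{f-F_k}{\R}$ infinite when $f$ is unbounded. Lemma~\ref{lem:fixedfraction} already gives exponentially small convex errors on $[-\rho n/\sigma,\rho n/\sigma]$, for each fixed $\rho<1$, at type $\sigma$ itself. It uses $|F''(x+iy)|\le C_\varepsilon e^{\varepsilon|x|+(\sigma+\varepsilon)|y|}$ and an ellipse with $\rho\sinh t<t$. So step 3 holds for your original $F_k$, or for a single extremizer from Proposition~\ref{prop:attainment}, and no $\gamma^{3/2}/\sqrt n$ transfer estimate is needed.

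Step 5 is a smaller gap: it cannot be quoted from Theorem~\ref{thm:main}(iii). Its compactness step, Lemma~\ref{lem:compactness}, needs a polynomial or stretched-exponential weight. For merely subexponential $f$, feeding $\norm{p_n}{[-R_n,R_n]}\le e^{o(n)}$ into Bernstein--Walsh does not yield local boundedness. Lemma~\ref{lem:finite-liminf} closes this gap. It compares $p_n$ with fixed-fraction approximants $q_n$ of a finite-error competitor $F_0$ at degree $\lceil(1+\varepsilon)n\rceil$, so that $p_n-q_n$ is uniformly bounded on $[-R_n,R_n]$. A limit $P$ then satisfies $|P-F_0|\le Ke^{(1+\varepsilon)\sigma|\Imag z|}$, and letting $\varepsilon\downarrow0$ gives type at most $\sigma$.
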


The two parts distinguish a margin chosen for a target from a margin
chosen for an entire growth class. The role of growth on the real line
is already visible for $F(x)=e^{\sigma x}$: its entire-function error is
zero, but its polynomial errors diverge for every sequence
\eqref{eq:margin}; see Proposition~\ref{prop:finite-difference}.
Theorem~\ref{thm:dichotomy}(ii) gives a counterexample even under
subexponential growth and uniform strict convexity.

\subsection{Relation to earlier work and organization}
Limit relations between polynomial and entire-function approximation
are developed in \cite{Ganzburg1991,Ganzburg2008,Ganzburg2023}.
Further Chebyshev-based estimates appear in
\cite[Sect.~3.1]{Ganzburg2025}. The classical construction for preserving
convexity approximates the second derivative, adds a nonnegative
constant, and integrates twice \cite{Shisha1965,KLPS2011,KLS2019}.
We combine this construction with directional growth estimates whose
exponential coefficient is $\sigma$. Optimizing the resulting bound
near the critical radius gives the sufficient margins. For polynomial
growth, polynomially weighted cosines give the sharp lower bound.
The weighted criterion in Appendix~\ref{app:weights} uses the growth
theorem of Berger--Schilling--Shargorodsky--Sharia \cite{BSSS2024}.

Section~\ref{sec:tools} establishes the growth estimates, convexity
correction, and compactness results used throughout the paper.
Section~\ref{sec:polynomial} proves Theorem~\ref{thm:main}, and
Section~\ref{sec:powers} applies it to power functions and their
extremizers. Sections~\ref{sec:stretch} and \ref{sec:subexp} treat the
two subexponential growth classes. Section~\ref{sec:type} studies the
dependence on exponential type. Appendix~\ref{app:weights} gives the
weighted criterion and a logarithmic growth envelope, while
Appendix~\ref{app:fractions} compares fixed-fraction scales and their
endpoints.

\section{Growth estimates and common approximation tools}\label{sec:tools}

The proofs use three ingredients: directional growth bounds without
an increase in exponential type, polynomial approximation followed by a convexity
correction, and compactness on expanding intervals. We establish these
ingredients in turn.

Constants may change from line to line and may depend on a fixed target,
growth parameters, and $\sigma$, but not on the polynomial degree. Local
uniform convergence always means convergence on every compact subset of
$\C$.

\subsection{Directional growth bounds}
The half-plane form of Bernstein's estimate converts a bounded
real-axis restriction into a directional growth bound with the same
exponential type \cite{Boas1954}.

\begin{lemma}[Half-plane estimate]\label{lem:halfplane}
Suppose $H$ is analytic in the upper half-plane, continuous on its
closure, and for every $\varepsilon>0$ satisfies
$|H(z)|\le C_\varepsilon e^{(\sigma+\varepsilon)|z|}$ there.
If $|H(x)|\le M$ for real $x$, then
\[
 |H(x+iy)|\le M e^{\sigma y},\qquad y\ge0.
\]
Consequently, if $H\in\B_\sigma$ is bounded on $\R$, then
$|H(x+iy)|\le\norm{H}{\R}e^{\sigma|y|}$.
\end{lemma}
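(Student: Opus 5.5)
The plan is to use the classical Phragmén--Lindelöf argument in the upper half-plane $\{\Imag z>0\}$, applied to $H$ multiplied by a factor that cancels the permitted exponential growth along the imaginary direction. For $\delta>0$ I will work with the auxiliary function
\[
 h_\delta(z)=H(z)\,e^{i(\sigma+\delta)z}.
\]
It is analytic in the open half-plane and continuous up to the real axis. For real $x$ we have $|h_\delta(x)|=|H(x)|\le M$. At a point $z=x+iy$ with $y\ge0$ we have $|e^{i(\sigma+\delta)z}|=e^{-(\sigma+\delta)y}$.

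First consider the positive imaginary axis $z=iy$. Using the growth hypothesis with $\varepsilon<\delta$ gives $|h_\delta(iy)|\le C_\varepsilon e^{(\varepsilon-\delta)y}$, which is bounded for $y\ge0$. Let $K_\delta=\max\bigl(M,\sup_{y\ge0}|h_\delta(iy)|\bigr)$. Then $h_\delta$ is bounded by $K_\delta$ on the boundary of each of the two quarter-planes $\{0<\arg z<\pi/2\}$ and $\{\pi/2<\arg z<\pi\}$. Each quarter-plane is a sector of opening $\pi/2$. Inside it $h_\delta$ has order at most $1<2$, since $|h_\delta(z)|\le C_\varepsilon e^{(2\sigma+\delta+\varepsilon)|z|}$. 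The Phragmén--Lindelöf principle for sectors therefore yields $|h_\delta|\le K_\delta$ throughout the closed upper half-plane.

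This bound is still weaker than the one claimed, because $K_\delta$ could exceed $M$. I will remove the bound on the imaginary axis with a second application of Phragmén--Lindelöf, now on the half-plane itself. We know that $h_\delta$ is bounded in the half-plane and bounded by $M$ on $\R$. Consider $h_\delta(z)\,e^{i\eta z}$ for $\eta>0$. It is bounded by $M$ on $\R$ and tends to $0$ along every ray in the open half-plane. On a large semicircle it is bounded by $M$ near the real axis. Away from the real axis it is bounded by $K_\delta e^{-\eta\Imag z}$, which becomes small once $\Imag z$ is large. The maximum principle on large half-discs then gives $|h_\delta e^{i\eta z}|\le M$. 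Letting $\eta\to0$ gives $|h_\delta|\le M$. Equivalently, one can use the standard half-plane Phragmén--Lindelöf theorem for bounded functions, which is valid because $|h_\delta|\le K_\delta$.

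Unwinding the definition gives $|H(x+iy)|\le Me^{(\sigma+\delta)y}$. Letting $\delta\downarrow0$ proves the main inequality. For the corollary with $H\in\B_\sigma$ bounded on $\R$, apply the result to $H$ itself for $y\ge0$. For $y\le0$, apply it to $z\mapsto\overline{H(\bar z)}$, which coincides with $H$ because $H$ is real on $\R$. Alternatively apply it to $H(-z)$, which also lies in $\B_\sigma$. In either case the bound holds with $M=\norm{H}{\R}$ and exponent $\sigma|y|$.

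The main obstacle is the gap between the a priori bound $K_\delta$ and the sharp constant $M$. The first Phragmén--Lindelöf step only shows that $h_\delta$ is bounded. The second step, on the half-plane with the extra damping factor $e^{i\eta z}$, is where the exact constant $M$ is recovered, and its limiting argument on the semicircular arcs needs care. Everything else is routine bookkeeping of $\varepsilon<\delta$.
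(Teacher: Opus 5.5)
Your argument is the paper's proof step for step. You use the same auxiliary function $H(z)e^{i(\sigma+\delta)z}$, bounded on the positive imaginary axis by taking $\varepsilon<\delta$. You apply Phragm\'en--Lindel\"of in the two quarter-planes (order one, below the critical order two) to get boundedness. You then use the maximum principle for bounded analytic functions in a half-plane to recover the constant $M$, and finish with $\delta\downarrow0$ and reflection.

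The one flaw is in your self-contained justification of that half-plane step. The factor $e^{i\eta z}$ has modulus $e^{-\eta\Imag z}$, so it gives no decay in the real direction. On a large semicircle, at points with bounded $\Imag z$, you therefore only know $|h_\delta e^{i\eta z}|\le K_\delta$. The claim that it is at most $M$ there assumes what you are trying to prove, so the half-disc argument does not close as written.

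You can fix this in either of two ways. One is to rely on the standard half-plane theorem for bounded functions, as you also propose and as the paper does. The other is to use a damping factor that decays uniformly in the closed upper half-plane, for example $(i/(z+i))^{\eta}$ with the principal branch. Its modulus is $|z+i|^{-\eta}\le1$ there, and it tends to zero as $|z|\to\infty$. With that factor, the maximum principle on large half-discs followed by $\eta\downarrow0$ does give $|h_\delta|\le M$.
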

\begin{proof}
Fix $\varepsilon>0$ and apply the Phragm\'en--Lindel\"of theorem in the
two upper quadrants to $H(z)e^{i(\sigma+\varepsilon)z}$. On the real
boundary it is bounded by $M$; on the positive imaginary axis it is
bounded, using the growth allowance $\varepsilon/2$. Its growth in either
quadrant is of order at most one, below the critical order two. It is
therefore bounded in the upper half-plane. The maximum principle for
bounded analytic functions in a half-plane, applied to its real boundary,
bounds its modulus by $M$. Let $\varepsilon\downarrow0$. Reflection gives
the assertion in the lower half-plane.
\end{proof}

\begin{lemma}[Polynomial and stretched envelopes]\label{lem:exactgrowth}
Let $F\in\B_\sigma$ and let $j$ be a fixed nonnegative integer.
\begin{enumerate}[label=\textup{(\roman*)}]
\item If $|F(x)|\le C(1+|x|)^M$ on $\R$, then
\begin{equation}\label{eq:polystrip}
 |F^{(j)}(z)|\le C_j(1+|z|)^M e^{\sigma|\Imag z|}.
\end{equation}
\item If $|F(x)|\le C e^{a|x|^\alpha}$, $0<\alpha<1$, then, with
$d=a/\cos(\pi\alpha/2)$,
\begin{equation}\label{eq:stretchstrip}
 |F^{(j)}(x+iy)|\le C_j
 \exp\bigl(\sigma|y|+d|x|^\alpha+d|y|^\alpha\bigr).
\end{equation}
\end{enumerate}
\end{lemma}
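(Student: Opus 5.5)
The plan is to first prove the estimates for $j=0$ by dividing $F$ by an auxiliary analytic weight that absorbs the real-axis envelope, so that Lemma~\ref{lem:halfplane} applies. Then I pass to derivatives with Cauchy's estimate on unit disks.

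For (i), work in the upper half-plane and set $H(z)=F(z)/(z+i)^M$. The function $(z+i)^M$ is analytic and nonvanishing for $\Imag z>-1$, using the principal branch. On $\R$ we have $|x+i|\ge(1+|x|)/\sqrt2$, so $|H(x)|\le C2^{M/2}$. In the closed upper half-plane $|z+i|\ge1$, so $H$ keeps the growth $C_\varepsilon e^{(\sigma+\varepsilon)|z|}$. Lemma~\ref{lem:halfplane} then gives $|H(x+iy)|\le C'e^{\sigma y}$. Since $|z+i|\le 1+|z|$, this yields $|F(z)|\le C'(1+|z|)^Me^{\sigma|\Imag z|}$ for $y\ge0$. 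The case $y\le0$ follows by reflection, using $(z-i)^M$.

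For (ii), in the upper half-plane use the weight $W(z)=\exp(-b(-iz)^\alpha)$ with the principal branch, so that $(-iz)^\alpha$ is analytic for $\Imag z>0$ and continuous up to $\R$. On the real axis, $(-ix)^\alpha=|x|^\alpha e^{\mp i\pi\alpha/2}$, hence $\Real(-ix)^\alpha=|x|^\alpha\cos(\pi\alpha/2)$. Choosing $b=d=a/\cos(\pi\alpha/2)$ gives $|W(x)|=e^{-a|x|^\alpha}$. So $H=FW$ is bounded by $C$ on $\R$. Since $\alpha<1$, the weight does not change the exponential type, and Lemma~\ref{lem:halfplane} gives $|F(z)|\le Ce^{\sigma y}|e^{d(-iz)^\alpha}|\le Ce^{\sigma y+d|z|^\alpha}$. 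Finally $|z|^\alpha\le|x|^\alpha+|y|^\alpha$ because $\alpha<1$ (subadditivity of $t\mapsto t^\alpha$, applied to $|z|\le|x|+|y|$). Reflection handles the lower half-plane.

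For the derivatives, apply Cauchy's estimate on the disk of radius $1$ about $z$:
\[
|F^{(j)}(z)|\le j!\max_{|w-z|=1}|F(w)|.
\]
For $|w-z|=1$ we have $1+|w|\le 2(1+|z|)$ and $e^{\sigma|\Imag w|}\le e^\sigma e^{\sigma|\Imag z|}$. Likewise $|u|^\alpha\le |x|^\alpha+1$ for $|u-x|\le1$, and similarly in $y$. These perturbations change only the constants, which gives $C_j$.

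The only delicate points are the branch and boundary continuity of $(-iz)^\alpha$ at $z=0$ and the exact cosine factor. Both are routine. I expect no real obstacle beyond checking that the weights have order less than $1$, so that the exponential type is unchanged.
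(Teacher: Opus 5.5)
Your proof is correct. Part (ii) and the derivative step follow the paper's argument essentially verbatim. The paper divides by $D_+(z)=\exp\bigl(d(1-iz)^\alpha\bigr)$ rather than multiplying by $\exp\bigl(-d(-iz)^\alpha\bigr)$. The shift by $1$ only moves the branch point off the real axis, and your unshifted version is equally valid because Lemma~\ref{lem:halfplane} requires only continuity up to $\R$.

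For part (i) the paper takes a different route. It multiplies $F$ by the entire damping factor $(\sin bz/bz)^k$, with an integer $k\ge\max\{1,M\}$. This gives a function of type at most $\sigma+kb$, bounded on $\R$ by a multiple of $b^{-M}$, to which the entire-function form of Lemma~\ref{lem:halfplane} applies. The paper then chooses $b=[2(1+|z|)]^{-1}$ at each point. With this choice the type increase costs only $e^{kb|\Imag z|}\le e^{k/2}$, and the sinc factor stays bounded away from zero since $|bz|\le 1/2$.

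Your division by $(z\pm i)^M$ is more direct. No type is lost, no pointwise choice of a parameter is needed, and non-integer $M$ is handled by the principal branch. It also makes (i) the same weighted half-plane argument as (ii); the paper itself uses $(z+i)^s$ this way in Lemma~\ref{lem:compactness}. The paper's version, in exchange, stays within entire functions, avoids branch choices, and treats both half-planes at once.
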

\begin{proof}
For (i), take an integer $k\ge\max\{1,M\}$ and $0<b\le1$. The function
\[
 H_b(z)=F(z)\left(\frac{\sin bz}{bz}\right)^k
\]
is bounded on $\R$ by $C b^{-M}$ and has type at most $\sigma+kb$.
Lemma~\ref{lem:halfplane} gives
$|H_b(z)|\le Cb^{-M}e^{(\sigma+kb)|\Imag z|}$.
At a specified $z$, set $b=[2(1+|z|)]^{-1}$. The sinc factor is bounded
away from zero for $|bz|\le1/2$, while $kb|\Imag z|\le k/2$.
Dividing gives \eqref{eq:polystrip} for $j=0$. Cauchy's formula on unit
circles gives every fixed derivative, with the same power $M$.

For (ii), in the upper half-plane use the principal branch to define
\[
 D_+(z)=\exp\bigl(d(1-iz)^\alpha\bigr).
\]
It is nonzero, $|D_+(z)|\ge1$ there, and
$|D_+(x)|\ge e^{a|x|^\alpha}$ on the boundary. Apply
Lemma~\ref{lem:halfplane} to $F/D_+$; division does not increase the
upper-half-plane type. Since
$|1+y-ix|^\alpha\le1+|x|^\alpha+|y|^\alpha$ for $y\ge0$, this gives
\eqref{eq:stretchstrip}, with an adjusted constant. The function
$D_-(z)=\exp(d(1+iz)^\alpha)$ gives the lower-half-plane bound.
Cauchy's formula and subadditivity of $t^\alpha$ give the derivative
estimates.
\end{proof}

Retaining the coefficient $\sigma$ of $|\Imag z|$ is essential at the
scales considered here. A fixed increase in type changes the critical
radius by a quantity of order $n$, while the available margin is $o(n)$.

\subsection{Chebyshev recovery and the convexity correction}
For $R,t>0$, denote by $\mathcal E(R,t)$ the Bernstein ellipse
\[
 z=R\cos(\theta-it)
   =R\cos\theta\cosh t+iR\sin\theta\sinh t,
 \qquad 0\le\theta\le2\pi.
\]
Let $H$ be entire and bounded by $M$ on $\mathcal E(R,t)$. Its
Chebyshev truncation $r_m$ of degree $m$ satisfies
\begin{equation}\label{eq:cheb}
 \norm{H-r_m}{[-R,R]}\le\frac{2M e^{-mt}}{e^t-1}.
\end{equation}
Indeed, the coefficients of order $k$ have modulus at most $2Me^{-kt}$,
and summing the tail gives \eqref{eq:cheb}; see also
\cite[Chapter~8]{Trefethen2019}. If $H$ is real on $\R$, then $r_m$ has real
coefficients.

To preserve convexity, we apply this estimate to the second derivative.
Adding its approximation error makes the approximating derivative
nonnegative, and two integrations produce a convex polynomial.

\begin{lemma}[Convex recovery]\label{lem:repair}
Let $n\ge2$, $R>0$, and $\delta\ge0$, and let $F\in C^2[-R,R]$ be convex.
If $r\in\mathcal P_{n-2}$ and
$\norm{F''-r}{[-R,R]}\le\delta$, the polynomial determined by
\begin{equation}\label{eq:repair}
 q''=r+\delta,\qquad q(0)=F(0),\qquad q'(0)=F'(0)
\end{equation}
has degree at most $n$, is convex on $[-R,R]$, and satisfies
\begin{equation}\label{eq:repairbound}
 0\le q(x)-F(x)\le\delta x^2\quad (|x|\le R).
\end{equation}
In particular, if $F$ is real entire and convex on $\R$, then for every
$t>0$,
\begin{equation}\label{eq:ellipse-recovery}
 \Ec_n(F;[-R,R])\le
 \frac{2R^2 e^{-(n-2)t}}{e^t-1}
 \max_{z\in\mathcal E(R,t)}|F''(z)|.
\end{equation}
\end{lemma}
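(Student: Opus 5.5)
The plan is to verify the three claims about $q$ (degree, convexity, two-sided bound) directly from \eqref{eq:repair}, and then to derive \eqref{eq:ellipse-recovery} by combining this with the Chebyshev estimate \eqref{eq:cheb} applied to $H=F''$.

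\emph{Degree and convexity.} Since $r\in\mathcal P_{n-2}$, $q''=r+\delta$ is a polynomial of degree at most $n-2$. Two integrations, with the prescribed values $q(0)$ and $q'(0)$, give $q\in\Pn$. On $[-R,R]$ the hypothesis $\norm{F''-r}{[-R,R]}\le\delta$ gives $r\ge F''-\delta$, so $q''=r+\delta\ge F''\ge0$ by convexity of $F$. Hence $q$ is convex there.

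\emph{The two-sided bound \eqref{eq:repairbound}.} Set $e=q-F\in C^2[-R,R]$. Then $e(0)=e'(0)=0$. Also $e''=r+\delta-F''\in[0,2\delta]$ on $[-R,R]$, since $|r-F''|\le\delta$. Taylor's formula with integral remainder gives
\[
 e(x)=\int_0^x(x-s)\,e''(s)\dd s ,
\]
valid for either sign of $x$ because $(x-s)\dd s$ has a constant sign relative to the orientation. This yields $0\le e(x)\le 2\delta\cdot x^2/2=\delta x^2$, which is \eqref{eq:repairbound}. In particular $\norm{q-F}{[-R,R]}\le\delta R^2$.

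\emph{The ellipse bound \eqref{eq:ellipse-recovery}.} Let $F$ be real entire and convex on $\R$, fix $t>0$, and set $M=\max_{\mathcal E(R,t)}|F''|$. If $M=\infty$ there is nothing to prove, and $M$ is finite since $F''$ is entire. First I would check that \eqref{eq:cheb} uses only the bound on the ellipse; this holds by the maximum principle, since the Chebyshev coefficient estimate comes from the contour on the ellipse. Apply \eqref{eq:cheb} to $H=F''$ with $m=n-2$. The truncation $r=r_{n-2}$ has real coefficients, lies in $\mathcal P_{n-2}$, and satisfies $\norm{F''-r}{[-R,R]}\le\delta:=2Me^{-(n-2)t}/(e^t-1)$. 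The first part then produces a convex $q\in\Pn$ with $\norm{F-q}{[-R,R]}\le\delta R^2$, and \eqref{eq:ellipse-recovery} follows from the definition \eqref{eq:Econv}.

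No step is a real obstacle. The only point needing care is the sign bookkeeping in the remainder formula for $x<0$, which I would handle by writing $e(x)=\int_0^{|x|}(|x|-u)\,e''(\pm u)\dd u$.
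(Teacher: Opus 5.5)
Your proposal is correct and matches the paper's proof. The paper also notes $0\le q''-F''\le2\delta$ and writes $q(x)-F(x)=x^2\int_0^1(1-s)\bigl(q''(sx)-F''(sx)\bigr)\dd s$, which is your Taylor remainder after the substitution $s\mapsto sx$ and removes the sign bookkeeping for $x<0$; it then applies \eqref{eq:cheb} to $F''$ with $m=n-2$, as you do.
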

\begin{proof}
We have $0\le q''-F''\le2\delta$, and hence $q''\ge0$.
The matched initial values give, for either sign of $x$,
\[
 q(x)-F(x)=x^2\int_0^1(1-s)
                   \bigl(q''(sx)-F''(sx)\bigr)\dd s.
\]
This proves \eqref{eq:repairbound}. Apply \eqref{eq:cheb} to
$F''$ with $m=n-2$ to obtain \eqref{eq:ellipse-recovery}.
\end{proof}

\subsection{Compactness on expanding intervals}
The lower bounds for the limit identities follow by passing from
polynomial approximants on expanding intervals to entire functions of
controlled exponential type.

Let
\[
 \Phi_R(z)=\frac zR+\sqrt{\frac{z^2}{R^2}-1}
\]
be the exterior conformal map of $\C\setminus[-R,R]$, with the branch
chosen so that $\Phi_R(z)\sim2z/R$ at infinity. On $[-R,R]$, the
notation $|\Phi_R|$ refers to its boundary value, which is one.
The Bernstein--Walsh
inequality is
\begin{equation}\label{eq:BW}
 |p(z)|\le\norm{p}{[-R,R]}|\Phi_R(z)|^{\deg p}.
\end{equation}
Uniformly for $z$ in a bounded set, with boundary modulus one on
$[-R,R]$,
\begin{equation}\label{eq:phi-limit}
 R\log|\Phi_R(z)|\longrightarrow|\Imag z|
 \quad\text{as }R\to\infty.
\end{equation}
Indeed, on a fixed bounded set and for sufficiently large $R$,
\[
 \log|\Phi_R(z)|=|\Imag\arccos(z/R)|
                 =\frac{|\Imag z|}{R}+O(R^{-3}),
\]
uniformly in $z$, by the Taylor expansion of $\arccos$ at zero.

\begin{lemma}[Weighted polynomial compactness]\label{lem:compactness}
Suppose $R_j\to\infty$, $n_j/R_j\to\sigma$, and
$p_j\in\mathcal P_{n_j}$. If, on $[-R_j,R_j]$, either
\begin{equation}\label{eq:compact-poly}
 |p_j(x)|\le C(1+|x|)^M
\end{equation}
with common finite $C,M$, or
\begin{equation}\label{eq:compact-stretch}
 |p_j(x)|\le C e^{a|x|^\alpha},\qquad 0<\alpha<1,
\end{equation}
with common $C,a,\alpha$, then $(p_j)$ is locally bounded in $\C$.
Every locally uniform subsequential limit belongs to $\B_\sigma$.
If each $p_j$ is convex on its interval, every such limit belongs to
$\Bc_\sigma$.
\end{lemma}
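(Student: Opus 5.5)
The plan is to bound $p_j$ off the interval with the Bernstein--Walsh inequality \eqref{eq:BW}, after dividing by a polynomial weight that absorbs the real-axis growth. Once local boundedness is known, Montel's theorem gives the subsequential limits. The type bound then comes from a second application of the same estimate, with \eqref{eq:phi-limit} converting $|\Phi_{R_j}|^{n_j}$ into $e^{\sigma|\Imag z|}$.

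\emph{Polynomial case.} Fix a compact set $K\subset\C$ and take $k=\lceil M/2\rceil$. Let $w_j$ be a polynomial of degree $2k$ with $|w_j(x)|\ge c(1+|x|)^M$ on $\R$, for instance $w_j(x)=(1+x^2)^k$. A quotient $p_j/w_j$ is not a polynomial, so Bernstein--Walsh cannot be applied to it directly. Instead I would apply it to $p_j$ on the smaller interval $[-r,r]$ for a fixed large $r$; there $|p_j|\le C(1+r)^M$, but the factor becomes $|\Phi_r(z)|^{n_j}$, which blows up as $j\to\infty$. This is too crude.

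The better route is to reduce to the bounded case through a shrinking weight. Consider
\[
 h_j(x)=p_j(x)\Bigl(1-\frac{x^2}{R_j^2}\Bigr)^{k}\cdot\text{(something)},
\]
or, more simply, bound the nodes. I would use the Lagrange/Chebyshev representation: $p_j$ is determined by its values on Chebyshev-type nodes. My cleanest concrete choice is to apply \eqref{eq:BW} on dyadic subintervals. For $|x|\le\rho$ the bound is $C(1+\rho)^M$, and \eqref{eq:BW} on $[-\rho,\rho]$ gives $|p_j(z)|\le C(1+\rho)^M|\Phi_\rho(z)|^{n_j}$. Taking $\rho=R_j$ gives $|p_j(z)|\le C(1+R_j)^M e^{n_j|\Imag z|/R_j+o(1)}$, which carries the polynomial loss $R_j^M$.

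To remove that loss, I would apply \eqref{eq:BW} to the polynomial $p_j(x)\,T(x)$, where $T(x)=\prod_{i=1}^{2k}(1-x/a_{i,j})$ has real zeros $a_{i,j}$ placed outside $[-R_j,R_j]$ near $\pm2R_j$. This choice does not dampen the growth, however. The damping has to come from dividing by a polynomial whose modulus is large where $|x|$ is large. The working approach is therefore Lemma~\ref{lem:halfplane}-style subharmonicity. The function $u_j=\log|p_j|-n_jg_{R_j}$, where $g$ is Green's function of $\C\setminus[-R_j,R_j]$, is subharmonic off the interval, and so is $u_j-M\log|z+ i|$. Two-constant or Poisson estimates of the boundary data $\log C+M\log(1+|x|)$ then give harmonic-measure averages. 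The Poisson average of $M\log(1+|x|)$ with respect to harmonic measure from $z\in K$ is bounded uniformly in $j$, because harmonic measure for the slit domain from a fixed point assigns mass $O(\rho^{-1/2})$ density only near the ends… I expect this estimate to be the main obstacle. Here I would use the explicit density $\omega(z,dx)\approx \Imag$-Poisson kernel away from the ends, which integrates $\log(1+|x|)$ finitely, and handle the endpoint contribution of size $R_j^{-1/2}\cdot M\log R_j\to0$. The stretched case is analogous, since $|x|^\alpha$ is Poisson integrable for $\alpha<1$.

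\emph{Limits.} With $\log|p_j(z)|\le C'+n_jg_{R_j}(z)$ on $K$ and \eqref{eq:phi-limit}, local boundedness follows, and Montel's theorem gives a subsequential limit $F$. The same bound yields $|p_j(z)|\le C(1+|z|)^Me^{(n_j/R_j)|\Imag z|+o(1)}$ locally uniformly for $|z|\le R_j^{1/2}$, which passes to $|F(z)|\le C(1+|z|)^Me^{\sigma|\Imag z|}$. Hence $F$ has type at most $\sigma$, and it is real on $\R$. Convexity passes to the limit because $p_j''\to F''$ locally uniformly and each real $x$ eventually lies in $[-R_j,R_j]$.
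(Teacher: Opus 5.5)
Your final route, majorizing the subharmonic function $u_j=\log|p_j|-n_j\log|\Phi_{R_j}|$ on $\overline{\C}\setminus[-R_j,R_j]$ by the harmonic extension of $\log C+M\log(1+|x|)$, is viable. However, the proposal stops at exactly the step the proof depends on. You need
$\int M\log(1+|x|)\dd\omega_{R_j}(z,x)\le C'+M\log(1+|z|)$, where $\omega_{R_j}(z,\cdot)$ is harmonic measure of $\overline{\C}\setminus[-R_j,R_j]$ at $z$, and $C'$ must be independent of $j$. The $z$-dependence must also be explicit. A bound that merely depends on the compact set $K$ gives local boundedness, but not type at most $\sigma$ for the limit. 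You flag this as the main obstacle and then only describe it heuristically, and the end contribution is misstated.

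The estimate is true. For $\Imag z>0$, write $\zeta=\Phi_{R_j}(z)=i+z/R_j+O(R_j^{-2})$ and $x=R_j\cos\theta$. Harmonic measure is then $\frac1{2\pi}\frac{|\zeta|^2-1}{|\zeta-e^{i\theta}|^2}\dd\theta$, with $|\zeta|^2-1\sim2\Imag z/R_j$. On the upper side, for $|x|\le R_j/2$, it is dominated by a multiple of the half-plane Poisson kernel $\frac1\pi\frac{\Imag z}{|x-z|^2}\dd x$. The remainder has mass $O(\Imag z/R_j)$, not $R_j^{-1/2}$, so it contributes $O(R_j^{-1}\log R_j)$, or $O(R_j^{\alpha-1})$ in the stretched case. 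As written, however, this is asserted rather than proved. Note also that $u_j-M\log|z+i|$ is not subharmonic in the slit domain, since it equals $+\infty$ at $z=-i$.

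That last observation points to the missing idea, which makes the harmonic-measure computation unnecessary. Work in the upper half-plane, whose boundary is all of $\R$. Use the analytic quotient $p_j/((z+i)^s\Phi_{R_j}^{n_j})$ with an integer $s\ge\max\{1,M\}$.
\begin{itemize}
\item On $[-R_j,R_j]$, the hypothesis and $|x+i|\ge(1+|x|)/\sqrt2$ bound the quotient by a common constant.
\item On $|x|\ge R_j$, \eqref{eq:BW} with $\norm{p_j}{[-R_j,R_j]}\le C(1+R_j)^M$ gives $|p_j(x)|\le C(1+R_j)^M|\Phi_{R_j}(x)|^{n_j}$. There $|x+i|^s\ge2^{-s/2}(1+R_j)^M$. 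So the polynomial loss you identified is absorbed exactly on the part of the axis where Bernstein--Walsh is used.
\end{itemize}
The quotient tends to zero at infinity, so the maximum principle gives $|p_j(z)|\le C''|z+i|^s|\Phi_{R_j}(z)|^{n_j}$ for $\Imag z\ge0$; use $z-i$ in the lower half-plane. This single bound gives local boundedness. By \eqref{eq:phi-limit}, it also gives $|F(z)|\le C''(1+|z|)^se^{\sigma|\Imag z|}$ for every limit $F$. This is the paper's proof. The stretched case replaces $(z+i)^s$ by $D_+(z)=\exp(d(1-iz)^\alpha)$ with $d=a/\cos(\pi\alpha/2)$. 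Your Montel, reality and convexity steps are fine.
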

\begin{proof}
In the polynomial case choose an integer $s\ge\max\{1,M\}$. In the upper
half-plane apply the maximum principle to
\[
 \frac{p_j(z)}{(z+i)^s\Phi_{R_j}(z)^{n_j}}.
\]
On the portion of the boundary inside the interval its modulus is
bounded by a common constant. Outside the interval, \eqref{eq:BW} and
$\norm{p_j}{[-R_j,R_j]}\le C(1+R_j)^M$ give the same bound after division
by $|x+i|^s$. The quotient tends to zero at infinity; thus
\[
 |p_j(z)|\le C|z+i|^s|\Phi_{R_j}(z)|^{n_j}.
\]
Use $z-i$ in the lower half-plane. These bounds, including their
continuous boundary limits, give local boundedness across the real axis.
Equation~\eqref{eq:phi-limit} shows that a subsequential limit satisfies
$|F(z)|\le C(1+|z|)^s e^{\sigma|\Imag z|}$.

For \eqref{eq:compact-stretch}, use instead the quotient
$p_j/(D_+\Phi_{R_j}^{n_j})$, with $D_+$ as in the proof of
Lemma~\ref{lem:exactgrowth}. On the interval $D_+$ absorbs the given
weight. Outside it, \eqref{eq:BW} and
$|D_+(x)|\ge e^{a|x|^\alpha}\ge e^{aR_j^\alpha}$ give a common boundary
bound. Moreover $|D_+(z)|\to\infty$ uniformly as $z$ tends to infinity
in the closed upper half-plane, so the quotient vanishes there. The
corresponding lower-half-plane argument gives local boundedness and, for
a limit,
\[
 |F(z)|\le C\exp\bigl(\sigma|\Imag z|+d(1+|z|)^\alpha\bigr).
\]
Both bounds imply type at most $\sigma$. Montel's theorem gives
subsequences; real-valuedness passes to their limits. If the polynomials
are convex, locally uniform convergence also gives convergence of their
second derivatives, and every fixed real interval eventually lies in
their domains of convexity.
\end{proof}

\begin{corollary}[A weighted liminf inequality]\label{cor:weighted-liminf}
If a continuous convex $f$ satisfies a polynomial-growth bound or
\eqref{eq:stretch-growth}, then every sequence \eqref{eq:margin} satisfies
\begin{equation}\label{eq:weighted-liminf}
 \Ac_\sigma(f)\le\liminf_{n\to\infty}\Ec_n(f;[-R_n,R_n]).
\end{equation}
This includes the assertion that the liminf is infinite when
$\Ac_\sigma(f)=\infty$.
\end{corollary}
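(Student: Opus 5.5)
We argue along a subsequence realizing the liminf. If the liminf is $+\infty$ there is nothing to prove, so let $L=\liminf_n\Ec_n(f;[-R_n,R_n])<\infty$ and choose $n_j\to\infty$ with $\Ec_{n_j}(f;[-R_{n_j},R_{n_j}])\to L$. For each $j$ pick a convex polynomial $p_j\in\mathcal P_{n_j}$, with $p_j''\ge0$ on $I_j=[-R_{n_j},R_{n_j}]$, such that
\[
 \norm{f-p_j}{I_j}\le\Ec_{n_j}(f;I_j)+1/j.
\]
The infimum in \eqref{eq:Econv} need not be attained, which is why we take near-minimizers. Since $\gamma_n=o(n)$, we have $R_{n_j}\to\infty$ and $n_j/R_{n_j}=\sigma n_j/(n_j-\gamma_{n_j})\to\sigma$. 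These are the scaling hypotheses of Lemma~\ref{lem:compactness}.

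Next we check the growth hypothesis of that lemma. On $I_j$ we have $|p_j(x)|\le|f(x)|+L+2$ for large $j$. If $f$ satisfies $|f(x)|\le C(1+|x|)^M$, then $|p_j(x)|\le (C+L+2)(1+|x|)^M$, which is \eqref{eq:compact-poly} with common constants. If instead $f$ satisfies \eqref{eq:stretch-growth}, we get $|p_j(x)|\le (C+L+2)e^{a|x|^\alpha}$, which is \eqref{eq:compact-stretch}. Lemma~\ref{lem:compactness} then says $(p_j)$ is locally bounded. By Montel's theorem a further subsequence converges locally uniformly to some $F$, and the lemma places this limit in $\Bc_\sigma$, because each $p_j$ is convex on its interval.

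Finally we pass to the limit pointwise. Fix $x\in\R$. For all large $j$ we have $x\in I_j$, so $|f(x)-p_j(x)|\le\Ec_{n_j}(f;I_j)+1/j$. Letting $j\to\infty$ along the subsequence gives $|f(x)-F(x)|\le L$. Taking the supremum over $x$ yields $\Ac_\sigma(f)\le\norm{f-F}{\R}\le L$, which is \eqref{eq:weighted-liminf}. The same argument shows that if $\Ac_\sigma(f)=\infty$, the liminf cannot be finite, because a finite liminf would produce an $F\in\Bc_\sigma$ with finite error; this is the extended-sense assertion.

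The only substantive point is the uniform weighted bound on the $p_j$, which lets us invoke Lemma~\ref{lem:compactness}. That bound holds because the approximation errors are bounded along the subsequence, so the weight of $f$ transfers to the $p_j$ with a constant independent of $j$. Everything else is routine.
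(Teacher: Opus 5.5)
Your proposal is correct and follows the paper's proof. Both arguments take near-optimal convex polynomials along a subsequence realizing a finite liminf, transfer the growth envelope of $f$ to them with common constants, apply Lemma~\ref{lem:compactness} to get a limit in $\Bc_\sigma$, and bound its error pointwise by the liminf. You spell out the details the paper leaves implicit, namely near-minimizers, the scaling check $n_j/R_{n_j}\to\sigma$, and the explicit weighted constants.
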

\begin{proof}
If the liminf is finite, choose convex polynomials along a subsequence
whose errors tend to it. Their real-axis bounds have common parameters
of the same growth class as $f$. Lemma~\ref{lem:compactness} yields a
convex entire limit of type at most $\sigma$. On each fixed real point
its error is bounded by the liminf. This proves the claim.
\end{proof}

\begin{proposition}[Attainment]\label{prop:attainment}
For any real-valued $f$ with $\Ac_\sigma(f)<\infty$, the infimum in
\eqref{eq:Ac} is attained. No growth assumption on $f$ is required.
\end{proposition}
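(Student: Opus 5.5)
The plan is a direct-method argument: take a minimizing sequence, normalize it at one point, extract a locally uniform limit, and check that the limit lies in $\Bc_\sigma$ and is within $\Ac_\sigma(f)$ of $f$.

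\textbf{Minimizing sequence and reduction to a bounded perturbation.} Let $A=\Ac_\sigma(f)<\infty$ and choose $F_k\in\Bc_\sigma$ with $\norm{f-F_k}{\R}\le A+1/k$. Then $F_k-F_1$ is real entire, has type at most $\sigma$, and satisfies $\norm{F_k-F_1}{\R}\le 2A+2$. Lemma~\ref{lem:halfplane} applies to this difference and gives
\[
 |F_k(z)-F_1(z)|\le(2A+2)e^{\sigma|\Imag z|},\qquad z\in\C,
\]
uniformly in $k$. This bound uses only $F_1$ and the differences, so no growth of $f$ itself enters; this is why no growth hypothesis on $f$ is needed.

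\textbf{Compactness.} The family $\{F_k-F_1\}$ is therefore locally bounded in $\C$. By Montel's theorem a subsequence converges locally uniformly to an entire $H$. The $e^{\sigma|\Imag z|}$ bound passes to $H$, so $H$ has exponential type at most $\sigma$, and $H$ is real on $\R$. Set $F=F_1+H\in\B_\sigma$. Locally uniform convergence of entire functions implies locally uniform convergence of all derivatives, so $F_k''\to F''$ pointwise on $\R$ along the subsequence. Hence $F''\ge0$ on $\R$ and $F\in\Bc_\sigma$.

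\textbf{Error bound.} For each fixed real $x$,
\[
 |f(x)-F(x)|=\lim_k|f(x)-F_k(x)|\le\lim_k(A+1/k)=A.
\]
Thus $\norm{f-F}{\R}\le A$, and the infimum is attained.

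The step requiring the most care is the choice of normalization. The sequence $(F_k)$ itself need not be locally bounded in any useful way, since $f$ may be unbounded or irregular. Subtracting the fixed function $F_1$ removes this difficulty, and it is what allows Lemma~\ref{lem:halfplane} to be applied without increasing the type. A second point to check is that the type of the limit $H$ does not exceed $\sigma$. This follows because the uniform bound carries the exact coefficient $\sigma$, and that coefficient survives the locally uniform limit.
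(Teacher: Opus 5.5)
Your proposal is correct and is essentially the paper's own proof. The paper likewise anchors the minimizing sequence at one fixed finite-error competitor, uses Lemma~\ref{lem:halfplane} to bound the differences by $Ke^{\sigma|\Imag z|}$, and adds a locally uniform subsequential limit back to the anchor; your choice of $F_1$ from the sequence itself as the anchor, instead of a separately chosen $F_0$, makes no difference.
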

\begin{proof}
Fix one finite-error competitor $F_0\in\Bc_\sigma$ and let $(F_j)$ be a
minimizing sequence. The functions $F_j-F_0$ have type at most $\sigma$
and a common bound on $\R$. Lemma~\ref{lem:halfplane} bounds them by
$K e^{\sigma|\Imag z|}$. A locally uniform subsequential limit, added to
$F_0$, belongs to $\Bc_\sigma$ and attains the infimum.
\end{proof}

\section{Sharp recovery for polynomial growth}\label{sec:polynomial}

\subsection{The upper estimate}
Optimizing the ellipse parameter leads to the function
\begin{equation}\label{eq:Psi}
 \Psi(r)=\arcosh(1/r)-\sqrt{1-r^2},\qquad 0<r<1.
\end{equation}
As $u\downarrow0$,
\begin{equation}\label{eq:Psi-expansion}
 \Psi(1-u)=c_0u^{3/2}(1+O(u)).
\end{equation}
To obtain \eqref{eq:Psi-expansion}, differentiate to find
$\Psi'(r)=-\sqrt{1-r^2}/r$ and integrate from $1-u$ to $1$.

\begin{proposition}[A quantitative estimate for polynomial growth]
\label{prop:poly-upper}
Let $F\in\Bc_\sigma$ satisfy $|F(x)|\le C(1+|x|)^M$. Assume
\eqref{eq:margin} and $\gamma_n\to\infty$. For all sufficiently large $n$,
define
\begin{equation}\label{eq:saddle-parameters}
 m=n-2,\quad b=n-\gamma_n,\quad
 t_n=\arcosh(m/b),\quad
 D_n=mt_n-\sqrt{m^2-b^2}.
\end{equation}
Then
\begin{align}
 \Ec_n(F;[-R_n,R_n])
 &\le C n^{M+2}(e^{t_n}-1)^{-1}e^{-D_n},\label{eq:poly-exact}\\
 &\le C n^{M+2}\sqrt{\frac n{\gamma_n}}
 \exp\left[-(c_0+o(1))\frac{\gamma_n^{3/2}}{\sqrt n}\right].
 \label{eq:poly-upper}
\end{align}
If $\gamma_n/G_n\to\infty$, this error is $o(n^{-K})$ for every fixed
$K>0$.
\end{proposition}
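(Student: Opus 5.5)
Apply \eqref{eq:ellipse-recovery} with $R=R_n=b/\sigma$ and the ellipse parameter $t=t_n$. Then estimate $\max_{\mathcal E(R,t)}|F''|$ using Lemma~\ref{lem:exactgrowth}(i) with $j=2$:
\[
 |F''(z)|\le C_2(1+|z|)^M e^{\sigma|\Imag z|}.
\]
On $\mathcal E(R,t)$ we have $|z|\le R\cosh t$ and $|\Imag z|\le R\sinh t$. Since $\cosh t_n=m/b<1$ up to the factor $n/(n-\gamma_n)$, we get $R\cosh t_n=m/\sigma\le n/\sigma$. Hence $(1+|z|)^M\le Cn^M$ and $R^2\le Cn^2$. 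Moreover
\[
 \sigma R\sinh t_n=b\sinh t_n=\sqrt{m^2-b^2}.
\]
Inserting these into \eqref{eq:ellipse-recovery} gives the bound
\[
 C n^{M+2}(e^{t_n}-1)^{-1}\exp\bigl(-mt_n+\sqrt{m^2-b^2}\bigr),
\]
which is \eqref{eq:poly-exact} with $D_n$ as defined. This requires $m>b$, i.e.\ $\gamma_n>2$, which holds eventually since $\gamma_n\to\infty$. The choice $t_n=\arcosh(m/b)$ is the minimizer of $mt-b\sinh t$, which explains the parameters in \eqref{eq:saddle-parameters}, but for the proof we only need to plug it in.

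For \eqref{eq:poly-upper}, write $D_n=m\Psi(r_n)$ with $r_n=b/m$. Then
\[
 m\arcosh(1/r)-\sqrt{m^2-b^2}=m\bigl(\arcosh(1/r)-\sqrt{1-r^2}\bigr).
\]
Here $u_n=1-r_n=(\gamma_n-2)/m$, and $u_n\to0$ because $\gamma_n=o(n)$. By \eqref{eq:Psi-expansion},
\[
 D_n=c_0\,m\,u_n^{3/2}(1+O(u_n))=c_0\frac{(\gamma_n-2)^{3/2}}{m^{1/2}}(1+o(1)).
\]
Since $\gamma_n\to\infty$ we have $(\gamma_n-2)^{3/2}=\gamma_n^{3/2}(1+o(1))$, and since $m/n\to1$ we have $m^{-1/2}=n^{-1/2}(1+o(1))$. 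Together these give the exponent $-(c_0+o(1))\gamma_n^{3/2}/\sqrt n$.

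For the prefactor, $\cosh t_n=1/r_n=1+u_n+O(u_n^2)$, so $t_n\sim\sqrt{2u_n}$. Hence
\[
 (e^{t_n}-1)^{-1}\sim t_n^{-1}\le C\sqrt{n/\gamma_n}.
\]

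For the final claim, suppose $\gamma_n/G_n\to\infty$. Then
\[
 \frac{\gamma_n^{3/2}}{\sqrt n}=\Bigl(\frac{\gamma_n}{G_n}\Bigr)^{3/2}\log n,
\]
and this is $\gg K'\log n$ for every fixed $K'$. The exponential therefore beats any power $n^{-K'}$. It absorbs $n^{M+2}\sqrt{n/\gamma_n}\le n^{M+3}$, which yields $o(n^{-K})$. The hypothesis $\gamma_n\to\infty$ is automatic here because $G_n\to\infty$.

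The only delicate point is bookkeeping in the asymptotics. We must check that the $O(u)$ error in \eqref{eq:Psi-expansion} and the replacements $\gamma_n-2\to\gamma_n$ and $m\to n$ only perturb the constant $c_0$ by $o(1)$. All of these are multiplicative $1+o(1)$ factors, using $u_n\to0$ and $\gamma_n\to\infty$. The remaining care is that the $|z|$-bound on the ellipse stays $O(n)$, which holds because $R\cosh t_n=m/\sigma$ exactly.
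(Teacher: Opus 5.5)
Your proposal is correct and follows the paper's proof: you insert $t_n$ into \eqref{eq:ellipse-recovery}, bound $F''$ via \eqref{eq:polystrip} using $|z|\le m/\sigma$ and $\sigma|\Imag z|\le\sqrt{m^2-b^2}$, and then expand $D_n=m\Psi(b/m)$ and $t_n\sim\sqrt{2\gamma_n/n}$. One small slip: the clause ``$\cosh t_n=m/b<1$ up to the factor $n/(n-\gamma_n)$'' is garbled, since $\cosh t_n=m/b>1$; it is harmless, because you only use the exact identity $R_n\cosh t_n=m/\sigma$.
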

\begin{proof}
On $\mathcal E(R_n,t_n)$, we have
\[
 |z|\le R_n\cosh t_n=\frac m\sigma,\qquad
 \sigma|\Imag z|\le b\sinh t_n=\sqrt{m^2-b^2}.
\]
Apply \eqref{eq:polystrip} to $F''$, then
Lemma~\ref{lem:repair}, to obtain \eqref{eq:poly-exact}. Furthermore,
\[
 t_n\sim\sqrt{\frac{2\gamma_n}{n}},\qquad
 D_n=m\Psi(b/m)
     =(c_0+o(1))\frac{\gamma_n^{3/2}}{\sqrt n}.
\]
This proves \eqref{eq:poly-upper}. If $\gamma_n/G_n\to\infty$, the
quantity $\gamma_n^{3/2}/\sqrt n$ dominates $\log n$ by an unbounded
factor. It therefore absorbs every fixed polynomial prefactor.
\end{proof}

\begin{proof}[Proof of Theorem~\ref{thm:main}: sufficiency]
Proposition~\ref{prop:poly-upper} proves (i)$\Rightarrow$(ii). Suppose (ii)
and first consider a target in (iii) with finite whole-line error. Every
competitor in $\Bc_\sigma$ with finite uniform error inherits a
polynomial growth bound on $\R$.
The triangle inequality and (ii) therefore give
\[
 \limsup_n\Ec_n(f;[-R_n,R_n])\le\Ac_\sigma(f).
\]
Corollary~\ref{cor:weighted-liminf} gives the reverse inequality. If the
whole-line error is infinite, the same corollary gives the asserted
extended limit. Thus (ii)$\Rightarrow$(iii). Taking $f=F\in\Bc_\sigma$
shows (iii)$\Rightarrow$(ii). The necessity of (i) is proved below.
\end{proof}

\subsection{An oscillatory lower bound}
For the lower bound, we estimate a Chebyshev coefficient of
$x^{2k}\cos x$. This coefficient is expressed through a derivative of even order
of the Bessel integral. For an integer $m\ge0$, let
\begin{equation}\label{eq:J-integral}
 J_m(R)=\frac1{2\pi}\int_{-\pi}^{\pi}
                  e^{i(m\theta-R\sin\theta)}\dd\theta.
\end{equation}

\begin{lemma}[Differentiated saddle estimate]\label{lem:J-saddle}
Let $m\to\infty$ through positive integers, let $0<R<m$, and assume
\[
 \frac Rm\to1,\qquad \frac{m-R}{m^{1/3}}\to\infty.
\]
For every fixed nonnegative integer $q$, set
$t=\arcosh(m/R)$, $S=\sqrt{m^2-R^2}$, and $\Phi=mt-S$. Then
\begin{equation}\label{eq:J-saddle}
 J_m^{(q)}(R)=(1+o(1))
       \frac{(\sinh t)^q}{\sqrt{2\pi S}}e^{-\Phi}.
\end{equation}
\end{lemma}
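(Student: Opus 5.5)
Differentiating \eqref{eq:J-integral} $q$ times under the integral gives
\[
 J_m^{(q)}(R)=\frac1{2\pi}\int_{-\pi}^{\pi}(-i\sin\theta)^q
 e^{i(m\theta-R\sin\theta)}\dd\theta .
\]
The plan is a steepest-descent (saddle point) estimate for this integral, uniform in the transition regime $R/m\to1$ with $m-R\gg m^{1/3}$. Set $\phi(\theta)=i(m\theta-R\sin\theta)$. The integrand is $2\pi$-periodic and entire in $\theta$, so I will shift the contour to the horizontal line $\theta=\tau+iy$ with $y=t=\arcosh(m/R)>0$. The vertical sides cancel by periodicity. The saddle points solve $m=R\cos\theta$, and $\theta=it$ is one of them. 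On the shifted line,
\[
 \Real\phi(\tau+it)=-mt+R\cosh t\cos\tau\cdot 0+R\sinh t\cos\tau
 =-mt+S\cos\tau ,
\]
using $R\sinh t=S$. Hence $|e^{\phi}|=e^{-mt+S\cos\tau}$, which is maximized only at $\tau=0$, where it equals $e^{-\Phi}$. At $\theta=it$ the prefactor is $(-i\sin(it))^q=(\sinh t)^q$. This already explains the form of \eqref{eq:J-saddle}.

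The next step is the local expansion at $\tau=0$:
\[
 \phi(it+\tau)-\phi(it)=-S(1-\cos\tau)+i(m\tau-R\cosh t\sin\tau)
 =-S(1-\cos\tau)+im(\tau-\sin\tau).
\]
So the Gaussian part is $-S\tau^2/2$, and the imaginary cubic correction is $im\tau^3/6$. I will split the $\tau$-integral at $|\tau|=\eta$, with $\eta$ chosen so that $S\eta^2\to\infty$ while $m\eta^3\to0$. Such a choice exists precisely because $S^3/m^2\to\infty$. Since $S^2=(m-R)(m+R)\asymp m(m-R)$, we have
\[
 \frac{S^3}{m^2}\asymp\Bigl(\frac{m-R}{m^{1/3}}\Bigr)^{3/2}\to\infty ,
\]
which is where the hypothesis enters. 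For instance, take $\eta=S^{-1/2}(S^3/m^2)^{1/12}$. On $|\tau|\le\eta$:
\begin{enumerate}
\item the phase $e^{im(\tau-\sin\tau)}=1+O(m\eta^3)$;
\item $S(1-\cos\tau)=S\tau^2/2\,(1+O(\eta^2))$;
\item the prefactor $(-i\sin(it+\tau))^q=(\sinh t\cos\tau-i\cosh t\sin\tau)^q$ differs from $(\sinh t)^q$ by a relative error $O(\eta\coth t+\eta^2)$.
\end{enumerate}
The third item is the delicate one. Because $t\sim\sqrt{2(m-R)/m}\to0$, we have $\coth t\approx1/t$, so I need $\eta/t\to0$. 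Now $S\approx mt$, so
\[
 \frac{\eta}{t}\approx\frac{(S^3/m^2)^{1/12}}{\sqrt{S}\,t},
\qquad
 \sqrt{S}\,t\approx\frac{S^{3/2}}{m}=\Bigl(\frac{S^3}{m^2}\Bigr)^{1/2},
\]
and therefore $\eta/t\approx(S^3/m^2)^{-5/12}\to0$. Then the Gaussian integral gives
\[
 \frac1{2\pi}e^{-\Phi}(\sinh t)^q\sqrt{\frac{2\pi}{S}}\,(1+o(1)),
\]
which is exactly \eqref{eq:J-saddle}.

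It remains to handle the tail $\eta\le|\tau|\le\pi$. There the modulus of the integrand is at most $(\cosh t)^q e^{-\Phi}e^{-S(1-\cos\tau)}$, and $S(1-\cos\tau)\ge 2S\tau^2/\pi^2$. The tail is therefore bounded by $e^{-\Phi}O(S^{-1/2}e^{-cS\eta^2})$. The factor $(\cosh t)^q/(\sinh t)^q\approx t^{-q}$ is only polynomial in $m$. Meanwhile $S\eta^2=(S^3/m^2)^{1/6}$, and I must check that this beats $q\log(1/t)\asymp\log m$.

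I expect this tail comparison to be the main obstacle, together with the uniformity of the prefactor estimate. If $S\eta^2$ is not $\gg\log m$, the fix is to avoid the crude bound $|\sin\theta|\le\cosh t$. Instead use $|\sin(\tau+it)|^2=\sinh^2t+\sin^2\tau$. Then bound
\[
 |\sin\theta|^q e^{-S(1-\cos\tau)}\le(\sinh t)^q(1+\tau^2/t^2)^{q/2}e^{-cS\tau^2}.
\]
Since $St^2\approx S^3/m^2\to\infty$, the polynomial factor in $\tau/t$ is absorbed by the Gaussian, and the tail is $o$ of the main term. This refined bound in fact handles both the middle range and the tail uniformly. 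I would adopt it from the start and treat the prefactor by dominated convergence in the variable $u=\sqrt S\,\tau$.
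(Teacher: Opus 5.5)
Your proposal is correct and, in the form you finally adopt, is essentially the paper's proof: shift the contour to $\theta=\tau+it$, rescale by $u=\sqrt S\,\tau$, and apply dominated convergence with a majorant $C(1+|u|)^q e^{-2u^2/\pi^2}$ coming from $1-\cos\tau\ge2\tau^2/\pi^2$. Your identity $|\sin(\tau+it)|^2=\sinh^2t+\sin^2\tau$ plays the role of the paper's observation that $\coth t/\sqrt S=m/S^{3/2}\to0$ (indeed $\coth t=m/S$ exactly), so you can drop the preliminary $\eta$-splitting and the crude tail bound.
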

\begin{proof}
Differentiate \eqref{eq:J-integral} $q$ times and shift the contour to
$\theta=u+it$. The integrand is entire and periodic, so the vertical
sides cancel. Using $R\cosh t=m$ and $R\sinh t=S$ gives
\begin{multline}\label{eq:J-shift}
 J_m^{(q)}(R)=\frac{(\sinh t)^q e^{-\Phi}}{2\pi}
 \int_{-\pi}^{\pi}
   (\cos u-i\coth t\sin u)^q\\
 {}
   \times e^{S(\cos u-1)}e^{im(u-\sin u)}\dd u.
\end{multline}
Set $v=\sqrt S\,u$. The hypotheses imply $S\to\infty$ and
\[
 \frac{m}{S^{3/2}}\to0,\qquad
 \frac{\coth t}{\sqrt S}=\frac m{S^{3/2}}\to0.
\]
For each fixed $v$, the differentiated amplitude tends to one, and
$|m(u-\sin u)|\le m|u|^3/6=m|v|^3/(6S^{3/2})\to0$.
Thus the scaled integrand tends to $e^{-v^2/2}$. Since
$1-\cos u\ge2u^2/\pi^2$ for $|u|\le\pi$, its modulus is bounded,
after this substitution, by
$C(1+|v|)^q e^{-2v^2/\pi^2}$, uniformly for large $m$.
Dominated convergence makes the integral in \eqref{eq:J-shift}
asymptotic to $\sqrt{2\pi/S}$. This proves \eqref{eq:J-saddle}.
\end{proof}

\begin{proposition}[An oscillatory coefficient estimate]
\label{prop:amplified}
Fix $B>0$ and an integer $k\ge1$, and define
$R_n'=n-BG_n$ and $h_k(x)=x^{2k}\cos x$. Then
\begin{equation}\label{eq:amplified}
 E_n(h_k;[-R_n',R_n'])\ge
 (1+o(1))\frac{(2B)^{k-1/4}}{\sqrt{2\pi}}
 n^{(4k-1)/3-c_0B^{3/2}}
 (\log n)^{(4k-1)/6}.
\end{equation}
\end{proposition}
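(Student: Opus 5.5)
The plan is to bound $E_n$ from below by a single Chebyshev coefficient of $h_k$ of degree above $n$, identify that coefficient with an even-order derivative of $J_m$, and evaluate it with Lemma~\ref{lem:J-saddle}.

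\textbf{Coefficient bound.} Write $R=R_n'$ and rescale $x=R\cos\theta$. Let $a_m$ be the $m$th Chebyshev coefficient of $h_k(R\,\cdot)$ on $[-1,1]$. For $m>n$, $T_m$ is orthogonal to $\Pn$ in the Chebyshev weight, so for every $p\in\Pn$
\[
 a_m=\frac2\pi\int_0^\pi\bigl(h_k(R\cos\theta)-p(R\cos\theta)\bigr)\cos m\theta\dd\theta .
\]
Hence $|a_m|\le 2\norm{h_k-p}{[-R,R]}$, and therefore $E_n(h_k;[-R,R])\ge |a_m|/2$.

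\textbf{Coefficient formula.} The Jacobi--Anger expansion gives $\cos(R\cos\theta)=J_0(R)+2\sum_{j\ge1}(-1)^jJ_{2j}(R)\cos 2j\theta$. Differentiating $2k$ times in $R$ gives
\[
 (\cos\theta)^{2k}\cos(R\cos\theta)=(-1)^k\partial_R^{2k}\cos(R\cos\theta).
\]
Since $h_k(R\cos\theta)=R^{2k}(\cos\theta)^{2k}\cos(R\cos\theta)$, for even $m$ we get
\[
 |a_m|=2R^{2k}\,|J_m^{(2k)}(R)| .
\]
Termwise differentiation is justified by locally uniform convergence of the entire series. I choose $m\in\{n+1,n+2\}$ even. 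This yields
\[
 E_n(h_k;[-R_n',R_n'])\ge R^{2k}|J_m^{(2k)}(R)|.
\]

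\textbf{Saddle asymptotics.} Apply Lemma~\ref{lem:J-saddle} with $q=2k$. Its hypotheses hold because
\[
 m-R=BG_n+O(1),\qquad \frac{m-R}{m^{1/3}}\asymp(\log n)^{2/3}\to\infty,\qquad \frac Rm\to1 .
\]
Put $u=(m-R)/m$. Then
\[
 \sinh t=\sqrt{(m/R)^2-1}=(1+o(1))\sqrt{2BG_n/n},\qquad S=(1+o(1))\sqrt{2BnG_n}.
\]
The key point, and the main obstacle, is that $\Phi$ must be known up to an additive $o(1)$, because $e^{-\Phi}$ is to give exactly $n^{-c_0B^{3/2}}$. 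Since $\Phi=m\Psi(R/m)$, \eqref{eq:Psi-expansion} gives
\[
 \Phi=c_0mu^{3/2}+O(mu^{5/2}),\qquad mu^{5/2}\asymp G_n^{5/2}n^{-3/2}\to0 .
\]
Replacing $mu^{3/2}=(m-R)^{3/2}m^{-1/2}$ by $(BG_n)^{3/2}n^{-1/2}$ incurs two errors:
\begin{itemize}
\item from shifting $m-R$ by $O(1)$, an error $O(G_n^{1/2}n^{-1/2})$;
\item from replacing $m$ by $n$, an error $O(G_n^{3/2}n^{-3/2})$.
\end{itemize}
Both are $o(1)$. Since $G_n^{3/2}n^{-1/2}=\log n$, we obtain $e^{-\Phi}=(1+o(1))\,n^{-c_0B^{3/2}}$.

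\textbf{Assembly.} Collecting the factors,
\[
 R^{2k}\frac{(\sinh t)^{2k}}{\sqrt{2\pi S}}
 =(1+o(1))\,\frac{n^{2k}(2BG_n/n)^k}{\sqrt{2\pi}\,(2BnG_n)^{1/4}}
 =(1+o(1))\,\frac{(2B)^{k-1/4}}{\sqrt{2\pi}}\,n^{k-1/4}G_n^{k-1/4}.
\]
Substituting $G_n=n^{1/3}(\log n)^{2/3}$ gives
\[
 n^{k-1/4}G_n^{k-1/4}=n^{(4k-1)/3}(\log n)^{(4k-1)/6}.
\]
Multiplying by $e^{-\Phi}$ yields \eqref{eq:amplified}.
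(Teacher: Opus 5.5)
Your proposal is correct and follows the paper's proof. In both, a degree-$m$ coefficient functional, with $m$ the smallest even integer exceeding $n$, annihilates $\Pn$ and evaluates to $\pm(R_n')^{2k}J_m^{(2k)}(R_n')$; then Lemma~\ref{lem:J-saddle} with $q=2k$ and \eqref{eq:Psi-expansion} give the bound. The differences are cosmetic: you use the Chebyshev ($\cos\theta$) coefficient with norm two and the Jacobi--Anger series, where the paper uses the $\sin\theta$ integral functional of norm one, and you write out the $o(1)$ bookkeeping for $\Phi$ that the paper only sketches.
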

\begin{proof}
Let $m$ be the smallest even integer greater than $n$. The functional
\[
 L(g)=\frac1{2\pi}\int_{-\pi}^{\pi}
                  g(R_n'\sin\theta)e^{im\theta}\dd\theta
\]
has norm at most one and annihilates every polynomial of degree at most
$n$. Differentiating \eqref{eq:J-integral} and using the evenness of $m$
gives
\[
 L(h_k)=(-1)^k(R_n')^{2k}J_m^{(2k)}(R_n').
\]
In Lemma~\ref{lem:J-saddle}, $m-R_n'=BG_n+O(1)$, so its hypotheses
hold. Since $R_n'\sinh t=S$, the absolute value of this coefficient is
\[
 (1+o(1))\frac{S^{2k-1/2}}{\sqrt{2\pi}}e^{-\Phi}.
\]
Here
\[
 S=(\sqrt{2B}+o(1))n^{2/3}(\log n)^{1/3},\qquad
 \Phi=c_0B^{3/2}\log n+o(1).
\]
The second estimate follows from \eqref{eq:Psi-expansion}; the next term
is $O(n^{-2/3}(\log n)^{5/3})$, and changing $m$ by $O(1)$ contributes
$o(1)$. Substitution proves \eqref{eq:amplified}.
\end{proof}

\begin{proof}[Proof of Theorem~\ref{thm:main}: necessity]
Normalize $\sigma=1$. If $\gamma_n/G_n$ does not tend to infinity, it is
bounded by some finite $B>0$ along an infinite subsequence. Choose a fixed
integer $k$ such that
\[
 \frac{4k-1}{3}>c_0B^{3/2},
\]
and define
\begin{equation}\label{eq:fk}
 F_k(x)=(2k+1)(1+x^2)^{k+1}+x^{2k}\cos x.
\end{equation}
For real $x$,
\[
 |(x^{2k}\cos x)''|\le(4k^2+2k+1)(1+x^2)^k,
\]
where each of $|x|^{2k}$, $|x|^{2k-1}$, and $|x|^{2k-2}$ is at most
$(1+x^2)^k$. The polynomial term in \eqref{eq:fk} has second derivative
at least $2(k+1)(2k+1)(1+x^2)^k$. Consequently,
\begin{equation}\label{eq:fk-convex}
 F_k''(x)\ge(4k+1)(1+x^2)^k>0.
\end{equation}
It is an even entire function of type one and polynomial real growth,
so $\Ac_1(F_k)=0$. For $n\ge2k+2$, subtracting its polynomial term does
not affect unconstrained degree-$n$ approximation. On the chosen
subsequence, $R_n\ge R_n'$, and therefore
\[
 \Ec_n(F_k;[-R_n,R_n])
 \ge E_n(h_k;[-R_n',R_n'])\longrightarrow\infty
\]
by Proposition~\ref{prop:amplified}. This disproves (ii). For general
$\sigma$, the function $F_k(\sigma x)$ has the required properties.
\end{proof}

\begin{example}[A counterexample of quartic growth]\label{ex:quartic}
The function
\[
 f(x)=x^4+2x^2+x^2\cos x
\]
is even, entire of type one, and satisfies
\[
 f''(x)=12x^2+4+(2-x^2)\cos x-4x\sin x
 \ge11x^2-4|x|+2\ge\frac{18}{11}.
\]
Nevertheless, at $R_n=n-G_n$,
\begin{equation}\label{eq:quartic}
 \Ec_n(f;[-R_n,R_n])\ge
 (1+o(1))\frac{2^{3/4}}{\sqrt{2\pi}}
 n^{1-c_0}\sqrt{\log n}\longrightarrow\infty,
 \qquad \Ac_1(f)=0.
\end{equation}
Indeed, subtract $x^4+2x^2$ and apply
Proposition~\ref{prop:amplified} with $k=B=1$. The lower bound in
\eqref{eq:quartic} diverges because $c_0<1$.
\end{example}

\begin{remark}[Dependence on the growth order]\label{rem:growth-quantifiers}
The lower bounds also hold for unconstrained approximation. For every
fixed $B>0$, increasing the polynomial growth order produces a target
for which the margin $\gamma_n=BG_n$ fails. Thus one margin for all
finite growth orders must satisfy the stronger condition in
Theorem~\ref{thm:main}.

In particular, the fixed-coefficient condition stated for the
polynomial-growth class in formula (1.12) of the author version of
\cite{Ganzburg2021}
(\href{https://arxiv.org/abs/2103.09376v2}{arXiv:2103.09376v2}) requires
an accompanying growth restriction. Example~\ref{ex:quartic} gives a
counterexample at $\gamma_n=G_n$.
\end{remark}

\section{Convex Bernstein constants and extremizers}\label{sec:powers}

Throughout this section, $\phi_\lambda(x)=|x|^\lambda$ and $\lambda\ge1$.
The algebraic-rate estimate first supplies an entire convex approximant
at finite distance from $|x|^\lambda$. The recovery theorem then
identifies the normalized limit through homogeneity.

\begin{proof}[Proof of Theorem~\ref{thm:bernstein}]
The classical Bernstein
estimate and the comparison theorem for algebraic rates of convex approximation
\cite{Bernstein1938,KLS2005} give
\begin{equation}\label{eq:power-rate}
 \sup_{n\ge1}n^\lambda\Ec_n(\phi_\lambda;[-1,1])<\infty.
\end{equation}
More precisely, \cite[Theorem~1.1]{KLS2005} transfers every positive
algebraic rate of unconstrained approximation of a continuous convex
function to convex approximation. That reference uses degree at most
$n-1$; an index shift gives \eqref{eq:power-rate} with our convention.

Choose convex $p_n\in\Pn$ with error within $o(n^{-\lambda})$ of the
infimum and set $P_n(z)=n^\lambda p_n(z/n)$. Then $P_n$ is convex on
$[-n,n]$ and
\[
 \norm{P_n-\phi_\lambda}{[-n,n]}
       =n^\lambda\norm{p_n-\phi_\lambda}{[-1,1]}\le C.
\]
Lemma~\ref{lem:compactness} produces a limit in $\Bc_1$ at finite
uniform distance from $\phi_\lambda$. Thus
$\Ac_1(\phi_\lambda)<\infty$.

Apply Theorem~\ref{thm:main} at $\sigma=1$ with $\gamma_n=\sqrt n$.
For $R>0$, homogeneity gives
\begin{equation}\label{eq:power-homogeneity}
 \Ec_n(\phi_\lambda;[-R,R])
      =R^\lambda\Ec_n(\phi_\lambda;[-1,1]).
\end{equation}
Since $R_n=n-\sqrt n$ satisfies $R_n/n\to1$, equations
\eqref{eq:recovery} and \eqref{eq:power-homogeneity} prove
\eqref{eq:bernstein}. Proposition~\ref{prop:attainment} gives attainment.
Replacing an extremizer $F$ by $(F(z)+F(-z))/2$ preserves convexity,
type, and its error bound, giving an even extremizer.

For a positive even integer $\lambda$, the target is a convex polynomial,
so the errors eventually vanish. Conversely, if the attained error is
zero, an entire function agrees with $|x|^\lambda$ on $\R$. Its order of
vanishing at zero forces $\lambda$ to be an integer, and agreement on
the negative axis forces that integer to be even. This proves positivity
in the remaining cases.
\end{proof}

\begin{corollary}[Type scaling and exact type]\label{cor:power-type}
For $\lambda\ge1$ and $\sigma>0$,
\begin{equation}\label{eq:type-scaling}
 \Ac_\sigma(\phi_\lambda)=\sigma^{-\lambda}\betac_\lambda.
\end{equation}
If $\lambda$ is not a positive even integer, every extremizer in
\eqref{eq:type-scaling} has type exactly $\sigma$. Moreover, the
classical unconstrained Bernstein constant
\[
 \beta_\lambda=\lim_{n\to\infty}
           n^\lambda E_n(\phi_\lambda;[-1,1])
\]
satisfies $\beta_\lambda\le\betac_\lambda$.
\end{corollary}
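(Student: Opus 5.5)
The corollary has three assertions: a scaling identity, exact type of extremizers, and a comparison with the unconstrained constant. I will prove them in that order.

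\textbf{Scaling.} Dilation $F(z)\mapsto F_\sigma(z)=\sigma^{-\lambda}F(\sigma z)$ is a bijection of $\Bc_1$ onto $\Bc_\sigma$. It maps $F\in\B_1$ to type $\sigma$, and the inverse maps $\B_\sigma$ back to $\B_1$. It preserves convexity on $\R$ because $F_\sigma''(x)=\sigma^{2-\lambda}F''(\sigma x)$. Homogeneity gives
\[
\norm{\phi_\lambda-F_\sigma}{\R}=\sigma^{-\lambda}\sup_x\bigl||\sigma x|^\lambda-F(\sigma x)\bigr|=\sigma^{-\lambda}\norm{\phi_\lambda-F}{\R}.
\]
Taking infima and using Theorem~\ref{thm:bernstein} yields \eqref{eq:type-scaling}. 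The same map carries extremizers to extremizers.

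\textbf{Exact type.} Suppose, aiming for a contradiction, that an extremizer $F$ for $\Ac_\sigma(\phi_\lambda)$ has type $\tau<\sigma$. If $\tau>0$, then $F\in\Bc_\tau$, so by \eqref{eq:type-scaling} at type $\tau$,
\[
\sigma^{-\lambda}\betac_\lambda=\norm{\phi_\lambda-F}{\R}\ge\Ac_\tau(\phi_\lambda)=\tau^{-\lambda}\betac_\lambda.
\]
Theorem~\ref{thm:bernstein} gives $\betac_\lambda>0$ when $\lambda$ is not a positive even integer. Hence $\tau^{-\lambda}\le\sigma^{-\lambda}$, which contradicts $\tau<\sigma$.

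If $\tau=0$, I would argue as follows. Since $F\in\Bc_{\tau'}$ for every $\tau'>0$, we get $\norm{\phi_\lambda-F}{\R}\ge\tau'^{-\lambda}\betac_\lambda$ for every $\tau'>0$. Letting $\tau'\downarrow0$ makes the right side tend to infinity, which contradicts finiteness of the error. So the type is exactly $\sigma$.

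The only delicate point is that the type of $F$ may be defined through an infimum. I only use that $F\in\B_{\tau'}$ for every $\tau'$ greater than or equal to its type, and this follows directly from the definition of $\B_{\tau'}$ with the $\varepsilon$ allowance.

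\textbf{Comparison with $\beta_\lambda$.} For each $n$ we have $E_n\le\Ec_n$, because the constrained infimum runs over a subset of $\Pn$. Therefore $n^\lambda E_n(\phi_\lambda;[-1,1])\le n^\lambda\Ec_n(\phi_\lambda;[-1,1])$. The classical limit $\beta_\lambda$ exists by Bernstein's theorem, and the right side converges to $\betac_\lambda$ by Theorem~\ref{thm:bernstein}. Passing to the limit gives $\beta_\lambda\le\betac_\lambda$.

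None of these steps is a real obstacle. The only care needed is the degenerate case of type zero, which the argument above handles.
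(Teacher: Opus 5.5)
Your proof is correct and follows the paper's argument: the same dilation $F(z)\mapsto\sigma^{-\lambda}F(\sigma z)$, the contradiction via membership in $\Bc_\tau$ for some $\tau<\sigma$ combined with $\betac_\lambda>0$, and the inclusion giving $E_n\le\Ec_n$. Your separate treatment of type zero is valid but not needed. The paper simply picks any $\tau\in(0,\sigma)$ exceeding the extremizer's type, which handles both cases at once.
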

\begin{proof}
The bijection $F(z)\mapsto\sigma^{-\lambda}F(\sigma z)$ from
$\Bc_1$ to $\Bc_\sigma$ multiplies the error from $\phi_\lambda$ by
$\sigma^{-\lambda}$. If an extremizer had type less than $\sigma$, it
would belong to $\Bc_\tau$ for some $0<\tau<\sigma$, contradicting
\eqref{eq:type-scaling} and $\betac_\lambda>0$. The last assertion follows
by inclusion of the constrained polynomial class in the unconstrained
class, or of the corresponding entire-function classes.
\end{proof}

\begin{corollary}[Limits of rescaled near-minimizers]\label{cor:near-minimizers}
Suppose $p_n\in\Pn$ is convex on $[-1,1]$ and
\[
 \norm{\phi_\lambda-p_n}{[-1,1]}
 =\Ec_n(\phi_\lambda;[-1,1])+o(n^{-\lambda}).
\]
Then $P_n(z)=n^\lambda p_n(z/n)$ is relatively compact for local uniform
convergence in $\C$, and every subsequential limit is an extremizer in
\eqref{eq:bernstein}. If the $p_n$ are even, so are their limits.
\end{corollary}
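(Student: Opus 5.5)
The argument combines the uniform bound \eqref{eq:power-rate} with Lemma~\ref{lem:compactness} for compactness, and uses the liminf inequality behind Corollary~\ref{cor:weighted-liminf} together with Theorem~\ref{thm:bernstein} to identify every limit as an extremizer.

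\emph{Local boundedness.} By hypothesis and \eqref{eq:power-rate},
\[
\norm{\phi_\lambda-p_n}{[-1,1]}\le Cn^{-\lambda}.
\]
After rescaling this gives
\[
\norm{P_n-\phi_\lambda}{[-n,n]}\le C, \qquad\text{so}\qquad |P_n(x)|\le C+|x|^\lambda\le C'(1+|x|)^\lambda \text{ on } [-n,n].
\]
Apply Lemma~\ref{lem:compactness} with $R_j=n_j=n$ and $\sigma=1$, along any subsequence. This shows that $(P_n)$ is locally bounded in $\C$, so Montel's theorem gives relative compactness. It also shows that every locally uniform subsequential limit $F$ lies in $\Bc_1$, because each $P_n$ is convex on $[-n,n]$.

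\emph{Identification of the limit.} Fix a limit $F=\lim P_{n_j}$ and a real point $x$. For $j$ large we have $|x|\le n_j$, and homogeneity gives
\[
|P_{n_j}(x)-\phi_\lambda(x)|\le n_j^\lambda\norm{p_{n_j}-\phi_\lambda}{[-1,1]}=n_j^\lambda\Ec_{n_j}(\phi_\lambda;[-1,1])+o(1).
\]
By Theorem~\ref{thm:bernstein}, the right-hand side tends to $\betac_\lambda$. Pointwise convergence then yields $|F(x)-\phi_\lambda(x)|\le\betac_\lambda$ for every real $x$, so $\norm{F-\phi_\lambda}{\R}\le\betac_\lambda$. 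Since $F\in\Bc_1$, the minimum characterization in \eqref{eq:bernstein} gives the reverse inequality. Hence $F$ attains the minimum and is an extremizer.

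\emph{Evenness.} If each $p_n$ is even, then each $P_n$ is even. Evenness is preserved under locally uniform limits, so every limit is even.

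\emph{Main obstacle.} The delicate point is that the normalized error converges to $\betac_\lambda$, rather than merely staying bounded. This is exactly where the existence of the limit from Theorem~\ref{thm:bernstein}, which rests on Theorem~\ref{thm:main}, is needed. The remaining steps are routine applications of the compactness lemma.
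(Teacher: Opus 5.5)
Your proposal is correct and follows the paper's proof. The paper likewise feeds a uniform bound on $\norm{P_n-\phi_\lambda}{[-n,n]}$ into Lemma~\ref{lem:compactness}, bounds the error of any limit by $\betac_\lambda$ through pointwise passage to the limit, and uses the minimum in \eqref{eq:bernstein} for the reverse inequality. The only difference is that you take the common bound from \eqref{eq:power-rate}, whereas the paper takes it from the finite limit in Theorem~\ref{thm:bernstein}.
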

\begin{proof}
Theorem~\ref{thm:bernstein} gives a common bound on
$\norm{P_n-\phi_\lambda}{[-n,n]}$. Apply
Lemma~\ref{lem:compactness}. The error of every limit is at most
$\betac_\lambda$, and by \eqref{eq:bernstein} it cannot be smaller.
\end{proof}

\section{Stretched-exponential growth}\label{sec:stretch}

We now prove Theorem~\ref{thm:stretch}. On the ellipse used in
Proposition~\ref{prop:poly-upper}, the decrease in $|\Imag z|$ away from
the imaginary axis controls the horizontal growth in
\eqref{eq:stretchstrip}. This gives the following estimate.

\begin{proposition}[A quantitative estimate for stretched-exponential growth]
\label{prop:stretch-upper}
Let $F\in\Bc_\sigma$ satisfy \eqref{eq:stretch-growth}. Assume
\eqref{eq:margin} and $\gamma_n\to\infty$, and let $D_n$ be defined by
\eqref{eq:saddle-parameters}. Then
\begin{multline}\label{eq:stretch-upper}
 \log\Ec_n(F;[-R_n,R_n])\le -D_n
 +C_1(n\gamma_n)^{\alpha/2}\\
 {}+C_2\left(\frac{n^3}{\gamma_n}\right)^{\alpha/[2(2-\alpha)]}
 +O(\log n),
\end{multline}
where $C_1,C_2>0$ are independent of $n$ and $\log0=-\infty$. Under
\eqref{eq:stretch-margin}, it follows that
\begin{equation}\label{eq:stretch-decay}
 \Ec_n(F;[-R_n,R_n])
 \le\exp\left[-(c_0+o(1))\frac{\gamma_n^{3/2}}{\sqrt n}\right].
\end{equation}
\end{proposition}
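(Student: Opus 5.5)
Our plan is to run the argument of Proposition~\ref{prop:poly-upper} again, on the same ellipse $\mathcal E(R_n,t_n)$ with $t_n=\arcosh(m/b)$, and to replace the polynomial envelope by \eqref{eq:stretchstrip} applied to $F''$. Lemma~\ref{lem:repair} gives
\[
 \Ec_n(F;[-R_n,R_n])\le\frac{2R_n^2e^{-mt_n}}{e^{t_n}-1}\max_{\mathcal E(R_n,t_n)}|F''|.
\]
Here $R_n^2/(e^{t_n}-1)=O(n^{5/2})$, since $t_n\sim\sqrt{2\gamma_n/n}$ and $\gamma_n\to\infty$; this contributes the $O(\log n)$ term. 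At the point $z=R_n\cos\theta\cosh t_n+iR_n\sin\theta\sinh t_n$ we have $\sigma|y|=\sqrt{m^2-b^2}\,|\sin\theta|$, $|x|\le (m/\sigma)|\cos\theta|$ and $|y|\le\sqrt{m^2-b^2}/\sigma$. The logarithm of the bound from \eqref{eq:stretchstrip} is therefore at most
\[
 S|\sin\theta|+d(m/\sigma)^\alpha|\cos\theta|^\alpha+d(S/\sigma)^\alpha,
 \qquad S=\sqrt{m^2-b^2}\asymp\sqrt{n\gamma_n}.
\]
Since $-mt_n+S=-D_n$, the third term produces exactly $C_1(n\gamma_n)^{\alpha/2}$. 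The remaining task is to bound
\[
 \max_\theta\bigl[-S(1-|\sin\theta|)+A|\cos\theta|^\alpha\bigr],\qquad A\asymp n^\alpha .
\]

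This maximization is the main obstacle. Set $c=|\cos\theta|$. Then $1-|\sin\theta|=1-\sqrt{1-c^2}\ge c^2/2$, so it suffices to maximize $g(c)=-Sc^2/2+Ac^\alpha$ over $c\in[0,1]$. The critical point satisfies $c^{2-\alpha}\asymp A/S$, and the maximum value is $\asymp A^{2/(2-\alpha)}S^{-\alpha/(2-\alpha)}$. Substituting $A\asymp n^\alpha$ and $S\asymp(n\gamma_n)^{1/2}$ gives
\[
 n^{2\alpha/(2-\alpha)}(n\gamma_n)^{-\alpha/[2(2-\alpha)]}=(n^3/\gamma_n)^{\alpha/[2(2-\alpha)]}.
\]
This is the $C_2$ term. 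If the critical point exceeds $1$, the bound $g\le A$ is still dominated by the same expression, because $A/S\ge1$ in that case. Adding the three contributions proves \eqref{eq:stretch-upper}. One small point needs care: the corrections $m$ versus $n$ and $\sigma$-scaling change only the constants $C_1,C_2$.

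For \eqref{eq:stretch-decay} we compare each error term with $D_n=(c_0+o(1))\gamma_n^{3/2}/\sqrt n$.

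The $C_2$ term: the ratio $(n^3/\gamma_n)^{\alpha/[2(2-\alpha)]}\big/(\gamma_n^{3/2}n^{-1/2})$ tends to zero exactly when $\gamma_n^{(3-\alpha)/(2-\alpha)}\gg n^{1/(2-\alpha)}$. After raising both sides to the power $2-\alpha$ (checking the exponents), this is $\gamma_n\gg n^{(1+\alpha)/(3-\alpha)}$, which is hypothesis \eqref{eq:stretch-margin}.

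The $C_1$ term: $(n\gamma_n)^{\alpha/2}=o(\gamma_n^{3/2}n^{-1/2})$ is equivalent to $\gamma_n^{3-\alpha}\gg n^{1+\alpha}$, the same condition.

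The $O(\log n)$ term: $\gamma_n^{3/2}/\sqrt n\gg n^{(3+3\alpha)/(2(3-\alpha))-1/2}=n^{2\alpha/(3-\alpha)}$, which beats $\log n$.

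All three terms are therefore $o(D_n)$, and this yields \eqref{eq:stretch-decay}. If $\Ec_n=0$ the inequality holds trivially, by the convention $\log0=-\infty$.
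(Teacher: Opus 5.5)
Your proposal is correct and follows the paper's proof essentially step for step. It uses the same ellipse $\mathcal E(R_n,t_n)$ and applies Lemma~\ref{lem:exactgrowth}(ii) to $F''$. It uses the same quadratic decrease of $|y|$ away from the imaginary axis: your $1-\sqrt{1-c^2}\ge c^2/2$ is the paper's $\sigma|y|\le\sigma V-\sigma Vx^2/(2U^2)$. It then optimizes the same expression $du^\alpha-cu^2$ to get the $(n^3/\gamma_n)^{\alpha/[2(2-\alpha)]}$ term.

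One arithmetic slip needs fixing. The ratio of the $C_2$ term to $\gamma_n^{3/2}/\sqrt n$ equals $n^{(1+\alpha)/(2-\alpha)}\gamma_n^{-(3-\alpha)/(2-\alpha)}$. Your intermediate condition should therefore read $\gamma_n^{(3-\alpha)/(2-\alpha)}\gg n^{(1+\alpha)/(2-\alpha)}$, not $n^{1/(2-\alpha)}$. Your final condition $\gamma_n\gg n^{(1+\alpha)/(3-\alpha)}$ is nonetheless correct.
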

\begin{proof}
For $m,b,t_n$ as in \eqref{eq:saddle-parameters}, the ellipse
$\mathcal E(R_n,t_n)$ has semiaxes
\[
 U=R_n\cosh t_n=\frac m\sigma,\qquad
 V=R_n\sinh t_n=\frac{\sqrt{m^2-b^2}}\sigma.
\]
On this ellipse, $x^2/U^2+y^2/V^2=1$. Hence
\[
 \sigma|y|\le\sigma V-\frac{\sigma V}{2U^2}x^2.
\]
Lemma~\ref{lem:exactgrowth}(ii) gives
\[
 \log|F''(x+iy)|
 \le C+\sigma V+dV^\alpha
     +d|x|^\alpha-\frac{\sigma V}{2U^2}x^2.
\]
For $c>0$,
\[
 \sup_{u\ge0}(du^\alpha-cu^2)
 =\left(1-\frac\alpha2\right)d
      \left(\frac{\alpha d}{2c}\right)^{\alpha/(2-\alpha)}.
\]
Therefore, with $M_n=\max_{\mathcal E(R_n,t_n)}|F''|$,
\[
 \log M_n\le C+\sigma V+C_1V^\alpha
                       +C_2(U^2/V)^{\alpha/(2-\alpha)}.
\]
Apply \eqref{eq:ellipse-recovery}. Here
$U\asymp n$, $V\asymp\sqrt{n\gamma_n}$, and
$mt_n-\sigma V=D_n$, which proves \eqref{eq:stretch-upper}.

Let $p_\alpha=(1+\alpha)/(3-\alpha)$. The ratios of the two positive
power terms in \eqref{eq:stretch-upper} to
$\gamma_n^{3/2}/\sqrt n$ are constant multiples of
\[
 \left(\frac{n^{p_\alpha}}{\gamma_n}\right)^{(3-\alpha)/2},
 \qquad
 \left(\frac{n^{p_\alpha}}{\gamma_n}\right)^{(3-\alpha)/(2-\alpha)},
\]
respectively. They tend to zero under \eqref{eq:stretch-margin}. Moreover,
\[
 \frac{\gamma_n^{3/2}}{\sqrt n}
 =n^{2\alpha/(3-\alpha)}
   \left(\frac{\gamma_n}{n^{p_\alpha}}\right)^{3/2},
\]
so this quantity also dominates $\log n$.
Equation~\eqref{eq:Psi-expansion} gives \eqref{eq:stretch-decay}.
\end{proof}

\begin{proof}[Proof of Theorem~\ref{thm:stretch}]
When $\Ac_\sigma(f)<\infty$, every finite-error entire competitor
inherits an envelope of the form \eqref{eq:stretch-growth}, with the
same $\alpha$. Proposition~\ref{prop:stretch-upper} and the triangle
inequality give the limsup bound by $\Ac_\sigma(f)$.
Corollary~\ref{cor:weighted-liminf} gives the reverse bound, and also
handles the case $\Ac_\sigma(f)=\infty$.
\end{proof}

\begin{corollary}[A common margin for stretched-exponential growth]
\label{cor:stretch-choice}
The sequence $\gamma_n=n/\log n$ satisfies the hypothesis of
Theorem~\ref{thm:stretch} for every fixed $0<\alpha<1$. More generally,
$\gamma_n=n^\beta$ satisfies that hypothesis whenever
$(1+\alpha)/(3-\alpha)<\beta<1$.
\end{corollary}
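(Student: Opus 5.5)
We have to verify two things about the sequences named in Corollary~\ref{cor:stretch-choice}. First, each sequence must satisfy the standing margin condition \eqref{eq:margin}. Second, each must satisfy the growth hypothesis \eqref{eq:stretch-margin} of Theorem~\ref{thm:stretch}. The plan is to prove the power case first and then deduce the logarithmic case from it by comparison. Throughout, finitely many initial indices are discarded, as the paper allows.

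\emph{The power case.} Take $\gamma_n=n^\beta$ with $0<\beta<1$.

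For \eqref{eq:margin}: we have $0\le n^\beta<n$ for every $n\ge2$, and $n^\beta/n=n^{\beta-1}\to0$. So $\gamma_n=o(n)$.

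For \eqref{eq:stretch-margin}: we have
\[
 \frac{\gamma_n}{n^{(1+\alpha)/(3-\alpha)}}=n^{\beta-p_\alpha},
 \qquad p_\alpha=\frac{1+\alpha}{3-\alpha}.
\]
This tends to infinity exactly when $\beta>p_\alpha$. The interval $(p_\alpha,1)$ is nonempty because $p_\alpha<1$ is equivalent to $1+\alpha<3-\alpha$, that is, to $\alpha<1$.

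\emph{The logarithmic case.} Take $\gamma_n=n/\log n$.

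For \eqref{eq:margin}: we have $0<n/\log n<n$ once $\log n>1$, that is, for $n\ge3$. Also $\gamma_n/n=1/\log n\to0$.

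For \eqref{eq:stretch-margin}: fix $0<\alpha<1$ and pick any $\beta\in(p_\alpha,1)$. Then
\[
 \frac{n}{\log n}=n^\beta\cdot\frac{n^{1-\beta}}{\log n}.
\]
The second factor tends to infinity, so $n/\log n\ge n^\beta$ for all large $n$. Combined with the power case, this gives
\[
 \frac{n/\log n}{n^{p_\alpha}}\ge n^{\beta-p_\alpha}\to\infty.
\]
Since $\alpha\in(0,1)$ was arbitrary, the single sequence $\gamma_n=n/\log n$ works for every $\alpha$ at once.

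There is no real obstacle. The only point needing care is the strict inequality $p_\alpha<1$. It holds because $\alpha<1$, and it fails at $\alpha=1$, where a common power margin $n^\beta$ would no longer be available.
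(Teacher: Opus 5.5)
Your proof is correct and follows essentially the same route as the paper, whose entire argument is the observation that $n^{1-p}/\log n\to\infty$ for every fixed $p<1$, applied with $p=(1+\alpha)/(3-\alpha)<1$. Passing through an intermediate exponent $\beta\in(p_\alpha,1)$ and checking \eqref{eq:margin} explicitly are harmless additions.
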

\begin{proof}
For every fixed $p<1$, $n^{1-p}/\log n\to\infty$.
\end{proof}

The constants in these estimates depend on the envelope parameters
$\alpha,a,C$. For example, $\gamma_n=n^{2/3}$ is sufficient when
$\alpha=1/2$, since $(1+\alpha)/(3-\alpha)=3/5$.
Appendix~\ref{app:weights} extends
the argument to envelopes that grow faster than every fixed stretched
exponential.

\section{Arbitrary subexponential growth}\label{sec:subexp}

We now prove Theorem~\ref{thm:dichotomy}. Approximation on intervals
$[-\rho n/\sigma,\rho n/\sigma]$ with fixed $\rho<1$ gives a
sequence of margins for each target with finite entire-function error.
For a prescribed sequence of margins, we construct a counterexample
using a complex coefficient estimate, Baire's theorem, and an entire
majorant that ensures convexity.

\subsection{Approximation on smaller intervals}

\begin{lemma}[Approximation on smaller intervals]\label{lem:fixedfraction}
Suppose $F\in\B_\sigma$ satisfies \eqref{eq:subexp-growth}. For each
fixed $0<\rho<1$, there are real polynomials $q_n\in\Pn$ and constants
$C,c>0$ such that
\begin{equation}\label{eq:fixedfraction}
 \norm{F-q_n}{[-\rho n/\sigma,\rho n/\sigma]}\le C e^{-cn},
 \qquad q_n\longrightarrow F\text{ locally uniformly in }\C.
\end{equation}
If $F$ is convex on $\R$, the $q_n$ can be chosen convex on their
respective intervals.
\end{lemma}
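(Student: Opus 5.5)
Fix $0<\rho<1$ and write $R=\rho n/\sigma$. The approach is to apply the Chebyshev estimate \eqref{eq:cheb} on a Bernstein ellipse $\mathcal E(R,t)$ whose parameter $t$ stays fixed as $n$ grows. For convex $F$ we apply the same estimate to $F''$ and then use the convexity correction of Lemma~\ref{lem:repair}.

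\textbf{Choice of ellipse.} Choose $t>0$ independent of $n$ with $\rho\sinh t<\rho\cosh t<1$; for instance, $t$ small enough that $\rho\cosh t<1$. On $\mathcal E(R,t)$ we have $|z|\le R\cosh t=\rho\cosh t\, n/\sigma$ and $\sigma|\Imag z|\le\rho n\sinh t$. We need a bound for $F$ (and its derivatives) in the strip. The subexponential bound \eqref{eq:subexp-growth} gives the following analogue of Lemma~\ref{lem:exactgrowth}: for every $\varepsilon>0$,
\[
 |F^{(j)}(z)|\le C_{\varepsilon,j}\,e^{\sigma|\Imag z|+\varepsilon|z|}.
\]
To prove it, apply Lemma~\ref{lem:halfplane} to $F(z)e^{i\varepsilon z}$ on the right quarter-plane and $F(z)e^{-i\varepsilon z}$ on the left one. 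A simpler route is to apply Lemma~\ref{lem:halfplane} to $F(z)\bigl(\tfrac{\sin bz}{bz}\bigr)^{k}$-type damping, or directly to $F(z)\exp(-\varepsilon\sqrt{z^2+1})$ suitably branched in the upper half-plane. I would use the quotient by $D(z)=\exp(\varepsilon(1-iz))$-style factors, mimicking part (ii) of Lemma~\ref{lem:exactgrowth} with $\alpha=1$ replaced by an $\varepsilon$-weight. This costs type $\varepsilon$ in the horizontal direction only. Cauchy's formula on unit circles then gives the derivatives.

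\textbf{Error estimate.} On the ellipse,
\[
 \max|F^{(j)}|\le C_\varepsilon\exp\bigl(n\rho\sinh t+\varepsilon\rho\cosh t\,n/\sigma\bigr),
\]
so \eqref{eq:cheb} with $m=n$ (or $m=n-2$ for $F''$) gives an error of order
\[
 \exp\bigl(-n[t-\rho\sinh t-\varepsilon\rho\cosh t/\sigma]+O(1)\bigr).
\]
Since $\rho<1$, the function $t-\rho\sinh t$ is positive for small $t>0$: its derivative at $0$ is $1-\rho>0$. Fix such a $t$, then choose $\varepsilon$ so small that the bracket equals some $c>0$. This proves the first part of \eqref{eq:fixedfraction} with $q_n$ the Chebyshev truncation $r_n$ of $F$ on $[-R,R]$.

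\textbf{Convex case.} If $F$ is convex, take $r$ to be the Chebyshev truncation of $F''$ of degree $n-2$. Let $\delta$ be its error, which is at most $Ce^{-cn}$ by the same argument, and define $q_n$ by \eqref{eq:repair}. Lemma~\ref{lem:repair} gives convexity on the interval and $\|F-q_n\|\le\delta R^2\le Cn^2e^{-cn}$. Shrinking $c$ absorbs the factor $n^2$.

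\textbf{Local uniform convergence.} This is the main step needing care. Set $g_n=F-q_n$ on $[-R_n,R_n]$ with $R_n=\rho n/\sigma$, and consider the difference $q_n-F$ off the real axis. For the Chebyshev truncation the tail series converges inside the whole ellipse. At a fixed $z$, the interior ellipse through $z$ has parameter $s(z)\to0$, about $|\Imag z|/R$. The $k$th coefficient term is therefore bounded by $2Me^{-kt}e^{ks}$, and summing gives $O(e^{-n(t-s)}M)$, which tends to $0$ uniformly on compacts by the same exponent count. For the convex correction, $q_n''-F''$ is the Chebyshev tail plus the constant $\delta$, and both tend to $0$ locally uniformly. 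Integrating twice from $0$, with the matched initial values $q_n(0)=F(0)$ and $q_n'(0)=F'(0)$, preserves this convergence. Alternatively, one can simply invoke Bernstein--Walsh \eqref{eq:BW} together with \eqref{eq:phi-limit}: for fixed compact $K$ we have $|\Phi_{R_n}|^{n}\le e^{C\sigma/\rho}$ there, which kills the factor $e^{-cn}$ after writing $q_n-q_{n+1}$ as a polynomial small on the common interval. This yields a Cauchy sequence that converges locally uniformly, and its limit agrees with $F$ on $\R$.
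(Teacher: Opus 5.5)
Your overall route is the paper's: a fixed ellipse parameter $t$ with $t>\rho\sinh t$ (the condition $\rho\cosh t<1$ you impose first plays no role), an $\varepsilon$-loss strip bound for $F^{(j)}$, and Chebyshev truncation of $F$ or of $F''$ followed by Lemma~\ref{lem:repair}. Local convergence then follows from the tail bound $|T_k(z/R)|\le e^{ks}$ with $ns=O(1)$ on compact sets.

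The one step that does not hold up is your justification of the strip bound $|F^{(j)}(z)|\le C_{\varepsilon,j}e^{\sigma|\Imag z|+\varepsilon|z|}$; the bound itself is true.
\begin{enumerate}
\item Lemma~\ref{lem:halfplane} needs boundedness on all of $\R$ and is a half-plane statement, so it cannot be applied on a quarter-plane.
\item The factor $e^{\pm i\varepsilon z}$ has modulus one on $\R$, so it does nothing against the real-axis growth.
\item Sinc factors decay only polynomially and cannot absorb $e^{\varepsilon|x|}$.
\item The function $\sqrt{z^2+1}$ has a branch point at $i$, inside the upper half-plane.
\item The weight $\exp(\varepsilon(1-iz))$ has constant modulus $e^{\varepsilon}$ on $\R$. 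Indeed, the constant $d=a/\cos(\pi\alpha/2)$ in Lemma~\ref{lem:exactgrowth}(ii) blows up as $\alpha\to1$.
\end{enumerate}
In fact no upper-half-plane weight can work. By Carleman's formula, a zero-free $D$ there with $|D(x)|\ge e^{\varepsilon|x|}$ cannot have $1/D$ of exponential type, because $\int^\infty \varepsilon x\,x^{-2}\dd x=\infty$.

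What works is Phragm\'en--Lindel\"of in each quadrant, where $\varepsilon\Real z$ has the right boundary values. In the first quadrant use $F(z)e^{-\varepsilon z+i(\sigma+\varepsilon)z}$. It is bounded on the positive real axis by \eqref{eq:subexp-growth} and on the positive imaginary axis by the type bound, each with allowance $\varepsilon/2$. It has order at most one, below the critical order two. This gives $|F(x+iy)|\le C_\varepsilon e^{\varepsilon|x|+(\sigma+\varepsilon)|y|}$, which is your bound after renaming $\varepsilon$, and Cauchy's formula handles the derivatives. The extra $\varepsilon|y|$ only changes your smallness condition to $\rho\sinh t+\varepsilon\rho(\cosh t+\sinh t)/\sigma<t$. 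Everything downstream is unaffected.

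Your Bernstein--Walsh alternative for local convergence is valid and genuinely different from the paper's tail estimate. The difference $q_n-q_{n+1}$ has degree at most $n+1$ and is $O(e^{-cn})$ on $[-R_n,R_n]$. By \eqref{eq:phi-limit}, $|\Phi_{R_n}|^{n+1}$ is bounded on compact sets, so $\sum_n|q_n-q_{n+1}|$ converges locally uniformly. The limit agrees with $F$ on $\R$, hence everywhere. Because it uses only the interval error bound, it handles the plain truncations and the convexity-corrected polynomials at once, without following the correction through two integrations.
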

\begin{proof}
For every $\varepsilon>0$ and every fixed $j\ge0$, the
Phragm\'en--Lindel\"of theorem in the four quadrants, followed by Cauchy's
formula, gives
\begin{equation}\label{eq:subexpstrip}
 |F^{(j)}(x+iy)|\le C_{\varepsilon,j}
                  e^{\varepsilon|x|+(\sigma+\varepsilon)|y|}.
\end{equation}
For example, in the first quadrant use
$F(z)e^{-\varepsilon z+i(\sigma+\varepsilon)z}$; the real and imaginary
boundary estimates follow with the smaller allowance $\varepsilon/2$.
The quadrant theorem applies since its order is at most one.
Choose a fixed small $t>0$ with $\rho\sinh t<t$, and then choose
$\varepsilon>0$ so small that
\[
 \rho\sinh t+\frac{\varepsilon\rho}{\sigma}(\cosh t+\sinh t)<t.
\]
Chebyshev truncation and \eqref{eq:subexpstrip} give exponential recovery
on the indicated intervals. For convex $F$, apply the truncation to
$F''$ with degree $n-2$ and use Lemma~\ref{lem:repair}; the additional
factor of order $n^2$ is absorbed by the strict exponential gap.

These truncations also converge on every fixed complex disk $K$.
Indeed, for $R\asymp n$ and $z\in K$,
$|T_k(z/R)|\le e^{C_K k/R}$. The corresponding Chebyshev tail is bounded by
\[
 \sum_{k>n}2M_n e^{-k(t-C_K/R)}
 =\frac{2M_n e^{-(n+1)(t-C_K/R)}}{1-e^{-(t-C_K/R)}},
\]
where the ellipse bound $M_n$ satisfies $\log M_n\le n(t-c)$ for some
fixed $c>0$. Thus the tail still tends to zero exponentially.
In the convex construction the derivative
truncations converge locally, the nonnegative correction tends to zero,
and integration with matched initial values yields local convergence of
$q_n$ to $F$.
\end{proof}

\begin{lemma}[A liminf inequality]\label{lem:finite-liminf}
Let $f$ be continuous and convex, satisfy \eqref{eq:subexp-growth}, and
have $\Ac_\sigma(f)<\infty$. Every sequence \eqref{eq:margin} satisfies
\begin{equation}\label{eq:finite-liminf}
 \Ac_\sigma(f)\le\liminf_n\Ec_n(f;[-R_n,R_n]).
\end{equation}
\end{lemma}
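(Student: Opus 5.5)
We follow the compactness argument of Corollary~\ref{cor:weighted-liminf}. The only new difficulty is that under \eqref{eq:subexp-growth} the near-optimal polynomials have no common growth envelope on $\R$ that Lemma~\ref{lem:compactness} accepts directly. We compensate with the hypothesis $\Ac_\sigma(f)<\infty$. By Proposition~\ref{prop:attainment}, fix $F_0\in\Bc_\sigma$ with $\norm{f-F_0}{\R}=\Ac_\sigma(f)=:A$. We may assume $L:=\liminf_n\Ec_n(f;[-R_n,R_n])<\infty$, and we choose a subsequence $n_j$ together with convex $p_j\in\mathcal P_{n_j}$ such that $\norm{f-p_j}{[-R_{n_j},R_{n_j}]}\to L$.

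The key step is to apply compactness not to $p_j$ but to the differences $h_j=p_j-q_j$. Here $q_j$ are the polynomials produced by Lemma~\ref{lem:fixedfraction} for $F_0$, whose hypotheses hold because $F_0$ is at bounded distance from $f$ and therefore inherits \eqref{eq:subexp-growth}. The trouble is that Lemma~\ref{lem:fixedfraction} approximates $F_0$ only on $[-\rho n/\sigma,\rho n/\sigma]$, while $h_j$ must be controlled on all of $[-R_{n_j},R_{n_j}]$.

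We resolve this with a two-interval argument.
\begin{enumerate}
\item On $I_j=[-\rho n_j/\sigma,\rho n_j/\sigma]$ we have $|h_j|\le |p_j-f|+|f-F_0|+|F_0-q_j|\le L+A+1$ for large $j$.
\item By Bernstein--Walsh \eqref{eq:BW} on $I_j$, $h_j$ is bounded on compacts by $(L+A+1)|\Phi_{\rho n_j/\sigma}(z)|^{n_j}$.
\item By \eqref{eq:phi-limit}, this bound behaves like $e^{\sigma|\Imag z|/\rho}$ locally, with $n_j/(\rho n_j/\sigma)=\sigma/\rho$.
\end{enumerate}
So $(h_j)$ is locally bounded. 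The $q_j$ converge locally uniformly to $F_0$, so $(p_j)$ is locally bounded as well. By Montel's theorem a subsequence $p_j\to F$ locally uniformly, with $F$ entire, real on $\R$, and convex on $\R$ (second derivatives converge, and each fixed interval is eventually inside $[-R_{n_j},R_{n_j}]$). Pointwise on $\R$, $|f(x)-F(x)|=\lim|f(x)-p_j(x)|\le L$.

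It remains to show $F\in\B_\sigma$, which is the main obstacle. Step~(3) only gives type at most $\sigma/\rho$, and $\rho<1$ is fixed. We get around this with a limiting argument rather than a sharper estimate. The limit $H=F-F_0$ is bounded on $\R$ by $L+A$ and has exponential type at most $\sigma/\rho$ for every $\rho<1$. Because the same limit $F$ arises from the full sequence $(p_j)$, the intervals $I_j$ and the choice of $\rho$ only enter the estimate, not the limit. Since $\rho$ can be taken arbitrarily close to $1$ (Lemma~\ref{lem:fixedfraction} holds for each fixed $\rho$), $H$ has type at most $\sigma$. Equivalently, the bound $|H(iy)|\le C_\rho e^{\sigma|y|/\rho}$ for every $\rho$ gives indicator at most $\sigma$. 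Lemma~\ref{lem:halfplane} then gives $|H(z)|\le(L+A)e^{\sigma|\Imag z|}$. Hence $F=F_0+H\in\Bc_\sigma$, and $\Ac_\sigma(f)\le\norm{f-F}{\R}\le L$, which is \eqref{eq:finite-liminf}.
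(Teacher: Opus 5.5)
Your proof is correct and takes essentially the paper's approach: compare the near-optimal $p_j$ with the fixed-fraction approximants of a finite-error competitor $F_0$, apply Bernstein--Walsh to the differences, extract one subsequence, and then let the auxiliary parameter tend to its limit along that same subsequence. The paper raises the comparison degree to $\lceil(1+\varepsilon)n\rceil$ so that the comparison covers all of $[-R_n,R_n]$, giving type at most $(1+\varepsilon)\sigma$, whereas you keep degree $n_j$ on the shrunken interval, giving type at most $\sigma/\rho$; this difference is only cosmetic. Your phrase ``the same limit $F$ arises from the full sequence'' should instead say ``along the same extracted subsequence'': $q_j^{(\rho)}\to F_0$ for every $\rho$, while $(p_j)$ itself need not converge.
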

\begin{proof}
Choose $F_0\in\Bc_\sigma$ at finite error from $f$; it inherits
\eqref{eq:subexp-growth}. If the liminf is finite, choose nearly optimal
convex $p_n$ on a subsequence realizing it. Fix $\varepsilon>0$ and let
$N_n=\lceil(1+\varepsilon)n\rceil$. Apply
Lemma~\ref{lem:fixedfraction} at degree $N_n$, using a fraction strictly
between $1/(1+\varepsilon)$ and $1$. It gives polynomials $q_n$ with
\[
 \deg q_n\le N_n,\qquad
 \norm{F_0-q_n}{[-R_n,R_n]}\to0,\qquad q_n\to F_0\text{ locally}.
\]
If $L$ is the finite liminf under consideration, then eventually
$\norm{p_n-q_n}{[-R_n,R_n]}\le K$, where
$K=L+\norm{f-F_0}{\R}+2$ is independent of $\varepsilon$.
Bernstein--Walsh yields
\[
 |p_n(z)-q_n(z)|\le K|\Phi_{R_n}(z)|^{N_n}.
\]
Using $\varepsilon=1$ first gives local boundedness and a subsequential
limit $P$ of $p_n$. For any other fixed $\varepsilon>0$, the same
subsequence and its corresponding $q_n$ imply
\[
 |P(z)-F_0(z)|\le K e^{(1+\varepsilon)\sigma|\Imag z|}.
\]
For each fixed $\varepsilon$ the comparison polynomials converge along
the same extracted subsequence; no further extraction is needed.
Let $\varepsilon\downarrow0$. Then $P-F_0$, and hence $P$, has type at
most $\sigma$. Convexity and the limiting error bound pass to $P$,
which proves \eqref{eq:finite-liminf}.
\end{proof}

The lemma gives the lower bound for every admissible margin sequence.
To prove Theorem~\ref{thm:dichotomy}(i), it remains to choose a margin
for which the upper bound agrees with the entire-function error.

\begin{proof}[Proof of Theorem~\ref{thm:dichotomy}(i)]
By Proposition~\ref{prop:attainment}, choose an extremizer $F_*$ for the
finite entire-function error. It inherits \eqref{eq:subexp-growth}.
Choose $0<\rho_1<\rho_2<\cdots<1$ with $\rho_j\to1$.
Lemma~\ref{lem:fixedfraction} provides increasing
integers $N_j$ such that for every $n\ge N_j$ the convex recovery error
for $F_*$ on $[-\rho_jn/\sigma,\rho_jn/\sigma]$ is at most $1/j$.
Choose $N_{j+1}>N_j$ and enlarge these thresholds so that
$(1-\rho_j)N_j\ge j$. On $N_j\le n<N_{j+1}$, define
$\gamma_n=(1-\rho_j)n$; for $1\le n<N_1$, define
$\gamma_n=(1-\rho_1)n$. Then $\gamma_n/n$ is nonincreasing to zero,
and $\gamma_n\ge j$ on the $j$th block, so $\gamma_n\to\infty$.
The triangle inequality gives the desired limsup bound, and
Lemma~\ref{lem:finite-liminf} gives the liminf bound.
\end{proof}

\subsection{A complex coefficient estimate}
We now prove Theorem~\ref{thm:dichotomy}(ii). Fix a sequence
\eqref{eq:margin}, and first assume $\sigma=1$, so $R_n=n-\gamma_n$.
The next estimate supplies the test functions for the Baire argument;
scaling at the end of the construction gives arbitrary $\sigma>0$.

For an integer $m\ge1$ and $R>0$, define, for
$g\in C([-R,R];\C)$,
\begin{equation}\label{eq:coefficient-functional}
 L_{m,R}(g)=\frac1\pi\int_0^\pi g(R\cos\theta)\cos(m\theta)\dd\theta.
\end{equation}
It has norm at most one and annihilates polynomials of degree less than
$m$. The standard degree-$m$ Chebyshev coefficient of $g(R\,\cdot)$
is $2L_{m,R}(g)$. For even $m$, the integral formula for the modified Bessel function
\cite[\href{https://dlmf.nist.gov/10.32.E3}{Eq.~10.32.3}]{DLMF2026} gives
\begin{equation}\label{eq:I-complex}
 L_{m,R}(\cosh((a+i)x))=I_m((a+i)R).
\end{equation}
Let $E_n^{\C}$ denote best uniform approximation by complex polynomials
of degree at most $n$ on the indicated interval.

\begin{lemma}[A complex coefficient lower bound]\label{lem:complex-obstruction}
For all sufficiently small fixed $a>0$, if $R_n/n\to1$, then
\begin{equation}\label{eq:complex-obstruction}
 \liminf_{n\to\infty}\frac1n
 \log E_n^{\C}(\cosh((a+i)x);[-R_n,R_n])>0.
\end{equation}
\end{lemma}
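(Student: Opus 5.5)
The plan is to bound the best approximation error from below by a single coefficient functional and then evaluate that functional by uniform (Debye-type) asymptotics of $I_m$.

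\textbf{Duality step.} For even $m$ slightly above $n$, take $m$ to be the smallest even integer exceeding $n$. The functional $L_{m,R_n}$ has norm at most one and annihilates complex polynomials of degree at most $n<m$, and this remains true over $\C$. Hence
\[
 E_n^{\C}(\cosh((a+i)x);[-R_n,R_n])\ge|L_{m,R_n}(\cosh((a+i)x))|=|I_m((a+i)R_n)|
\]
by \eqref{eq:I-complex}. The task reduces to showing that $\frac1n\log|I_m((a+i)R_n)|$ has a positive liminf.

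\textbf{Asymptotic step.} Write $m=n+O(1)$, so that $(a+i)R_n=m\,w_n$ with $w_n\to w:=a+i$. Use the uniform Debye expansion, valid for $|\arg w|<\pi/2$ and locally uniformly in $w$:
\[
 I_m(mw)=\frac{e^{m\eta(w)}}{\sqrt{2\pi m}(1+w^2)^{1/4}}(1+o(1)),\qquad
 \eta(w)=\sqrt{1+w^2}+\log\frac{w}{1+\sqrt{1+w^2}}.
\]
At $w=a+i$ with $a>0$ small, we have $1+w^2=2ia+a^2\neq0$, and $w$ lies in the right half-plane, so the expansion applies. The quantities $(1+w^2)^{1/4}$ are bounded away from $0$ and $\infty$ near $w$, so
\[
 \tfrac1n\log|I_m((a+i)R_n)|\to\Real\eta(a+i).
\]

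\textbf{Sign computation.} This is the key check. At $a=0$, $w=i$ is the turning point, and
\[
 \eta(i)=\log\frac{i}{1}=i\pi/2,
\]
so $\Real\eta(i)=0$. Expand $\eta$ near $i$. Since $\eta'(w)=\sqrt{1+w^2}/w$, with $s=w-i=a$ we get $1+w^2\approx 2is$, and
\[
 \eta(i+s)\approx i\pi/2+\int_0^s\frac{\sqrt{2iu}}{i}\,du=i\pi/2-i\cdot\tfrac23\sqrt{2i}\,s^{3/2}.
\]
Take principal branches, $\sqrt{2i}=1+i$, so $-i(1+i)=1-i$. Then
\[
 \Real\eta(a+i)=\tfrac{2}{3}a^{3/2}(1+O(a))>0
\]
for small $a>0$. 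The branch of $\sqrt{1+w^2}$ must be the one consistent with $\sqrt{1+w^2}\to\sqrt{1+a^2}>0$ along the real-to-$w$ path, that is, with the Debye expansion in the right half-plane. I would verify this continuity and the sign carefully.

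\textbf{Main obstacle.} The hard part is justifying uniformity of the Debye asymptotic near the turning point $w=\pm i$. For fixed small $a>0$, however, $w$ stays a fixed positive distance from $\pm i$, so standard uniform results such as DLMF 10.41.3 apply on a compact neighbourhood of $a+i$.

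A fallback avoids the expansion. Apply steepest descent directly to
\[
 I_m(z)=\frac1\pi\int_0^\pi e^{z\cos\theta}\cos m\theta\dd\theta
\]
at the saddle $\cosh$-type point $\theta_0$ with $\sin\theta_0$-relation $w\sinh\tau=1$. Equivalently, bound $|I_m|$ from below via the same contour-shift trick as in Lemma~\ref{lem:J-saddle}, with a Gaussian limit giving the leading constant. This yields the same exponent $\Real\eta(w)$, hence \eqref{eq:complex-obstruction}.
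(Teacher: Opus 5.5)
Your proposal is correct and follows the paper's proof essentially step for step. Both use the same even-index coefficient functional $L_{m,R_n}$ and identity \eqref{eq:I-complex}, then the DLMF~10.41 Debye expansion at a fixed distance from the turning point inside the right half-plane. Both finish with the expansion $\eta(i+a)=i\pi/2+\tfrac23(1-i)a^{3/2}+O(a^{5/2})$, obtained from $\eta'(z)=\sqrt{1+z^2}/z$ with principal branches.
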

\begin{proof}
Fix $a>0$, and let $m_n$ be the smallest even integer exceeding $n$.
The points $(a+i)R_n/m_n$ tend to $a+i$ and lie in a compact subset
of the open right half-plane. The large-order
expansion \cite[\href{https://dlmf.nist.gov/10.41.E3}{Eq.~10.41.3},
\href{https://dlmf.nist.gov/10.41.E7}{Eq.~10.41.7}, and
\href{https://dlmf.nist.gov/10.41.iii}{Sect.~10.41(iii)}]{DLMF2026}
gives
\begin{equation}\label{eq:I-complex-asymptotic}
 \log|I_{m_n}((a+i)R_n)|
 =m_n\Real\eta\left(\frac{(a+i)R_n}{m_n}\right)+O(\log m_n),
\end{equation}
where
\begin{equation}\label{eq:eta}
 \eta(z)=\sqrt{1+z^2}
          +\log\frac{z}{1+\sqrt{1+z^2}}.
\end{equation}
The branches are continued from the positive real axis into the right
half-plane. Since $\eta'(z)=\sqrt{1+z^2}/z$, expansion from the right
at $z=i$ gives
\[
 \eta(i+a)=\frac{i\pi}{2}+\frac23(1-i)a^{3/2}+O(a^{5/2}).
\]
In particular $\Real\eta(i+a)>0$ for every sufficiently small fixed
$a>0$. Equation~\eqref{eq:I-complex-asymptotic}, $R_n/m_n\to1$, and
\eqref{eq:I-complex} prove \eqref{eq:complex-obstruction}.
\end{proof}

\subsection{A Baire category argument}
Let $\mathcal X$ be the real Fr\'echet space of real entire functions of
exponential type zero, with increasing norms
\[
 p_j(H)=\sup_{z\in\C}|H(z)|e^{-|z|/j},\qquad j=1,2,\ldots.
\]
These norms are finite precisely on this class. If $(H_\ell)$ is
Cauchy in every $p_j$, then for each $j$ the functions
$H_\ell(z)e^{-|z|/j}$ converge uniformly on $\C$. The limits agree
on compact sets after multiplication by $e^{|z|/j}$ and give an entire
function $H$ real on $\R$. Since $p_j(H_\ell-H)\to0$ for every
$j$, the limit belongs to $\mathcal X$. This proves completeness.
Define $H_e(z)=(H(z)+H(-z))/2$ and $H_o(z)=(H(z)-H(-z))/2$, and let
\[
 s_n(H)=E_n(H_e\cos;[-R_n,R_n])+E_n(H_o\sin;[-R_n,R_n]).
\]
These are continuous seminorms: distance to the linear space $\Pn$ is a
seminorm, and restriction to each fixed interval is continuous in the
norms $p_j$.

If $\sup_n s_n(H)<\infty$ for every $H\in\mathcal X$, Baire's theorem
would imply that for some $j$ and $C$,
\begin{equation}\label{eq:baire-bound}
 s_n(H)\le C p_j(H)\qquad(n\ge1,\ H\in\mathcal X).
\end{equation}
Indeed, the closed sets
$\{H:\sup_n s_n(H)\le k\}$ would cover $\mathcal X$, so one would have
nonempty interior. Translation and subadditivity then give a common bound in a
neighborhood of zero, and the increasing norms and homogeneity give
\eqref{eq:baire-bound}.

Choose a fixed sufficiently small $0<a<1/j$, and consider the polynomials
\[
 H_K(z)=\sum_{r=0}^K\frac{(az)^r}{r!}.
\]
They satisfy $p_j(H_K)\le1$. For each fixed $n$, let $K\to\infty$ in
\eqref{eq:baire-bound}, using uniform convergence on the interval. It
would follow that
\[
 E_n(\cosh(ax)\cos x;[-R_n,R_n])+
 E_n(\sinh(ax)\sin x;[-R_n,R_n])\le C
\]
for every $n$. The sum bounds the complex error of
$\cosh((a+i)x)=\cosh(ax)\cos x+i\sinh(ax)\sin x$, contrary to
Lemma~\ref{lem:complex-obstruction}.

Consequently there is a fixed $H\in\mathcal X$ for which $s_n(H)$ is
unbounded. At least one of the even functions
\begin{equation}\label{eq:hard-amplitude}
 k(z)=H_e(z)\cos z,\qquad k(z)=H_o(z)\sin z
\end{equation}
has $\limsup_n E_n(k;[-R_n,R_n])=\infty$. Express this function as
$k=h\cos$ or $k=h\sin$, with $h$ of exponential type zero.

\subsection{Convexity of the counterexample}
We now add a function of type zero whose second derivative dominates
$|k''|$. Let $h(z)=\sum_{r\ge0}a_rz^r$, and define
\begin{equation}\label{eq:majorants}
 A(z)=\sum_{r\ge0}|a_r|z^r,
 \qquad B(z)=1+A(z+1)^2+A(1-z)^2.
\end{equation}
The function $A$ is entire of type zero. Indeed, Cauchy's coefficient
bounds on $|z|=2r$ imply
$A(r)\le2\max_{|z|=2r}|h(z)|$ for $r>0$, and the latter has arbitrarily
small exponential growth. Thus $B$ is also of type zero. It is even and
real, and on the real axis
\[
 B(x)\ge1,\qquad B(x)\ge A(|x|+1).
\]
For the second inequality use the appropriate term $A(x+1)^2$ or
$A(1-x)^2$ and $1+u^2\ge u$ for $u\ge0$.
Cauchy's formula on a unit circle centered at $x$ gives
\[
 |h(x)|\le A(|x|+1),\quad
 |h'(x)|\le A(|x|+1),\quad
 |h''(x)|\le2A(|x|+1),
\]
so $|k''(x)|\le5B(x)$. Let $Q$ be defined by
\begin{equation}\label{eq:Q}
 Q''=6B,\qquad Q(0)=Q'(0)=0.
\end{equation}
Equivalently,
\[
 Q(z)=6z^2\int_0^1(1-s)B(sz)\dd s.
\]
This formula gives $|Q(z)|\le3C_\varepsilon|z|^2e^{\varepsilon|z|}$
for every $\varepsilon>0$. Hence $Q$ is even, real entire, and of
exponential type zero. Let $F=Q+k$. Then
\begin{equation}\label{eq:convexified}
 F\in\B_1,\qquad F''(x)\ge B(x)\ge1.
\end{equation}
Since sine and cosine are bounded on $\R$, $F$ has subexponential
real-axis growth.

Furthermore $E_n(Q;[-R_n,R_n])\to0$ exponentially. To see this directly,
use any fixed Bernstein ellipse and the estimate
$|Q(z)|\le C_\varepsilon e^{\varepsilon|z|}$; choose $\varepsilon$ small
enough to leave a strict exponential gap in \eqref{eq:cheb}.
The triangle inequality for the unconstrained approximation seminorm
therefore gives
\[
 E_n(F;[-R_n,R_n])
 \ge E_n(k;[-R_n,R_n])-E_n(Q;[-R_n,R_n]),
\]
which is unbounded along a subsequence.
As $\Ec_n\ge E_n$ and $F\in\Bc_1$, its constrained errors are unbounded
and $\Ac_1(F)=0$.

For general $\sigma>0$, define
$\widetilde F(x)=\sigma^{-2}F(\sigma x)$ and restore
$R_n=(n-\gamma_n)/\sigma$. Then $\widetilde F''\ge1$, its type is at
most $\sigma$, and
\[
 \Ec_n(\widetilde F;[-R_n,R_n])
 =\sigma^{-2}\Ec_n(F;[-\sigma R_n,\sigma R_n]).
\]
The right-hand side is unbounded along a subsequence. If $\widetilde F$
had type smaller than $\sigma$, choose $0<\tau<\sigma$ with
$\widetilde F\in\Bc_\tau$ and $\tau/\sigma<\rho<1$.
Since $R_n\le n/\sigma<\rho n/\tau$,
Lemma~\ref{lem:fixedfraction} at type $\tau$ would give errors tending
to zero on $[-R_n,R_n]$, a contradiction. Thus the type is exactly
$\sigma$. This completes the proof of
Theorem~\ref{thm:dichotomy}(ii).

\begin{corollary}[Further properties of the counterexample]\label{cor:strong-obstruction}
The function in Theorem~\ref{thm:dichotomy}(ii) can be chosen so that
$\Ac_\tau(F)=\infty$ for every $0<\tau<\sigma$. Moreover, given any
prescribed $b_n\ge1$ with $\log b_n=o(n)$, it can be chosen with
\[
 \limsup_n\frac{\Ec_n(F;[-R_n,R_n])}{b_n}=\infty.
\]
\end{corollary}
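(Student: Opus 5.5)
Both strengthenings come from revisiting the Baire argument of Theorem~\ref{thm:dichotomy}(ii) with modified seminorms, and then rechecking that the convexification of the previous subsection does not destroy the new properties.

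\emph{Growth faster than $b_n$.} Work at $\sigma=1$ and replace $s_n$ by $s_n/b_n$. These are still continuous seminorms on $\mathcal X$. If $\sup_n s_n(H)/b_n<\infty$ for every $H\in\mathcal X$, Baire's theorem gives $s_n(H)\le Cb_np_j(H)$. Testing with the truncations $H_K$ then gives
\[
 E_n^{\C}(\cosh((a+i)x);[-R_n,R_n])\le 2Cb_n .
\]
Since $\log b_n=o(n)$, this contradicts Lemma~\ref{lem:complex-obstruction}, whose lower bound grows like $e^{cn}$. Hence some $H$ has $\limsup_n s_n(H)/b_n=\infty$, and one of the two functions $k$ in \eqref{eq:hard-amplitude} inherits this. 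The convexifying term $Q$ has $E_n(Q;[-R_n,R_n])\to0$, so $E_n(F)/b_n$ is still unbounded, and therefore so is $\Ec_n(F)/b_n$. The rescaling to general $\sigma$ multiplies the errors by $\sigma^{-2}$, which is harmless.

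\emph{$\Ac_\tau(F)=\infty$ for $\tau<\sigma$.} Suppose $G\in\Bc_\tau$ satisfies $\norm{F-G}{\R}<\infty$. Then $G$ inherits \eqref{eq:subexp-growth}. Choose $\rho$ with $\tau/\sigma<\rho<1$. As in the exact-type argument, Lemma~\ref{lem:fixedfraction} at type $\tau$ gives convex polynomials with errors to $G$ tending to zero on $[-R_n,R_n]$, since $R_n<\rho n/\tau$. The triangle inequality then bounds $\Ec_n(F;[-R_n,R_n])$ by $\norm{F-G}{\R}+o(1)$. This contradicts the unbounded limsup already established, so $\Ac_\tau(F)=\infty$. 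One function serves all $\tau$, because the contradiction is derived separately for each $\tau$.

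Both properties must hold for a single function. The choice with $b_n$-unbounded errors already forces unbounded errors, so the second argument applies to that same function. The only step needing care is checking that Baire still applies with the rescaled seminorms. This holds because each $s_n/b_n$ is continuous and the sets $\{H:\sup_n s_n(H)/b_n\le k\}$ are closed.
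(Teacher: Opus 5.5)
Your proposal is correct and follows essentially the same route as the paper. For the $b_n$ assertion, you rerun the Baire argument with the seminorms $s_n/b_n$, use $\log b_n=o(n)$ against the exponential lower bound of Lemma~\ref{lem:complex-obstruction}, use $b_n\ge1$ so that the convexifying term $Q$ does not interfere, and then rescale to general $\sigma$. For the other assertion, you obtain $\Ac_\tau(F)=\infty$ by applying fixed-fraction recovery (Lemma~\ref{lem:fixedfraction}) to a finite-error type-$\tau$ competitor, which would bound the errors on $[-R_n,R_n]$ and contradict their unboundedness.
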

\begin{proof}
A finite-error competitor of type $\tau<\sigma$ would inherit
subexponential growth; fixed-fraction recovery would then bound the
polynomial errors on $[-R_n,R_n]$, contradicting their unboundedness.
For the second assertion, first carry out the construction at $\sigma=1$
with the seminorms $s_n/b_n$.
Since $\log b_n=o(n)$, the exponential lower bound in
Lemma~\ref{lem:complex-obstruction}, divided by $b_n$, still tends
to infinity. The function $Q$ has exponentially small approximation
error, and $b_n\ge1$ gives
$E_n(Q;[-R_n,R_n])/b_n\to0$. The resulting convex function
therefore has the claimed normalized lower bound. The same rescaling
gives the assertion for arbitrary $\sigma>0$.
\end{proof}

\section{Dependence on the exponential type}\label{sec:type}

We next relate approximation on $[-n/\sigma,n/\sigma]$ to the dependence
of $\Ac_\sigma(f)$ on $\sigma$. The entire-function classes are nested,
so $\Ac_\sigma(f)$ is nonincreasing. Its one-sided limits are denoted by
\[
 \Ac_{\sigma-}(f)=\lim_{\tau\uparrow\sigma}\Ac_\tau(f),\qquad
 \Ac_{\sigma+}(f)=\lim_{\tau\downarrow\sigma}\Ac_\tau(f),
\]
where $\sigma>0$. Both limits exist in $[0,\infty]$.

\begin{proposition}[Right continuity]\label{prop:right-continuity}
For every real-valued target $f$, the function
$\sigma\mapsto\Ac_\sigma(f)$ is nonincreasing and right-continuous:
\begin{equation}\label{eq:right-continuity}
 \Ac_{\sigma+}(f)=\Ac_\sigma(f),\qquad \sigma>0.
\end{equation}
\end{proposition}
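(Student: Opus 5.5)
Monotonicity is immediate from the nesting $\Bc_\sigma\subset\Bc_\tau$ for $\sigma<\tau$, which gives $\Ac_\tau(f)\le\Ac_\sigma(f)$. Hence the one-sided limit exists and satisfies $\Ac_{\sigma+}(f)\le\Ac_\sigma(f)$. It remains to prove the reverse inequality $\Ac_\sigma(f)\le\Ac_{\sigma+}(f)$. We may assume $\Ac_{\sigma+}(f)=L<\infty$, since otherwise there is nothing to show.

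The plan is a compactness argument modelled on Proposition~\ref{prop:attainment}. Take $\tau_j\downarrow\sigma$. By Proposition~\ref{prop:attainment} (or simply by near-minimizers), choose $F_j\in\Bc_{\tau_j}$ with $\norm{f-F_j}{\R}\le\Ac_{\tau_j}(f)+1/j\le L+1/j$. Fix one of them, say $F_1\in\Bc_{\tau_1}$, as a reference competitor. The differences $F_j-F_1$ are real entire of type at most $\tau_1$ (both functions have type at most $\tau_1$) and satisfy $\norm{F_j-F_1}{\R}\le 2L+3$. Lemma~\ref{lem:halfplane} then gives $|F_j(z)-F_1(z)|\le(2L+3)e^{\tau_1|\Imag z|}$. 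The family $(F_j-F_1)$ is therefore locally bounded, and Montel's theorem yields a locally uniform subsequential limit $G$. Set $F=F_1+G$; equivalently, $F$ is the locally uniform limit of a subsequence of $(F_j)$.

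The key step, and the only real obstacle, is to show that $F$ has type at most $\sigma$ rather than merely $\tau_1$. The reference $F_1$ has type up to $\tau_1$, so we cannot read off the type of $F$ from the bound on $F_j-F_1$ alone. To get around this, compare instead with later terms. For fixed $k$ and $j\ge k$, the difference $F_j-F_k$ has type at most $\tau_k$ and real-axis norm at most $2L+2$. Lemma~\ref{lem:halfplane} gives $|F_j(z)-F_k(z)|\le(2L+2)e^{\tau_k|\Imag z|}$, and letting $j\to\infty$ along the subsequence yields
\[
|F(z)-F_k(z)|\le(2L+2)e^{\tau_k|\Imag z|}.
\]
This shows that $F-F_k$ has type at most $\tau_k$, so $F$ has type at most $\tau_k$ for each $k$. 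Indeed, type at most $\tau_k$ for both summands gives $|F(z)|\le C_\varepsilon e^{(\tau_k+\varepsilon)|z|}$. Since the type of $F$ is at most $\tau_k$ for every $k$ and $\tau_k\downarrow\sigma$, it is at most $\sigma$, that is, $F\in\B_\sigma$. Note that this argument does not use any growth assumption on $f$, only the finiteness of the errors, which matches the claim ``for every real-valued target''.

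Finally, $F$ is real on $\R$ as a limit of real functions. Locally uniform convergence of entire functions gives $F_j''\to F''$ on $\R$, so $F''\ge0$ and $F\in\Bc_\sigma$. Pointwise on $\R$ we have $|f(x)-F(x)|=\lim_j|f(x)-F_j(x)|\le L$. Hence $\Ac_\sigma(f)\le L=\Ac_{\sigma+}(f)$, which proves \eqref{eq:right-continuity}.
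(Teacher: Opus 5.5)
Your proof is correct and follows the paper's argument essentially step by step: a fixed reference competitor together with Lemma~\ref{lem:halfplane} and Montel's theorem gives a limit, and comparing with the later competitors $F_k$ (the paper's $F_J$) shows that the limit has type at most $\tau_k$ for every $k$, hence at most $\sigma$. The appeal to Proposition~\ref{prop:attainment} is unnecessary, as you already note, since near-minimizers suffice.
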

\begin{proof}
Monotonicity follows from inclusion of the entire-function classes.
Let $L=\Ac_{\sigma+}(f)\le\Ac_\sigma(f)$. If $L=\infty$, equality
follows by monotonicity. Otherwise choose $\sigma_j\downarrow\sigma$ and
$F_j\in\Bc_{\sigma_j}$ whose errors tend to $L$. These errors are bounded,
so $\norm{F_j-F_1}{\R}$ is bounded independently of $j$.
Lemma~\ref{lem:halfplane}, applied at type $\sigma_1$, makes the
differences locally bounded. Montel's theorem therefore gives a
subsequence converging locally to a real convex entire function $F$
with $\norm{f-F}{\R}\le L$. Fix $\varepsilon>0$ and choose $J$ such that
$\sigma_J<\sigma+\varepsilon$. For $j\ge J$, $F_j-F_J$ is bounded on
$\R$ by a common constant and has type at most $\sigma_J$.
Lemma~\ref{lem:halfplane}, followed by passage to the limit, shows that
$F-F_J$ has type at most $\sigma_J$. Thus $F$ has type at most
$\sigma+\varepsilon$. Since $\varepsilon$ is arbitrary, $F\in\Bc_\sigma$,
so $\Ac_\sigma(f)\le L$.
\end{proof}

\begin{theorem}[Bounds at the critical scale]\label{thm:critical}
Let $f$ be continuous and convex. Suppose either that $f$ has polynomial
real-axis growth, or that it has subexponential real-axis growth and
$\Ac_\sigma(f)<\infty$. Then
\begin{equation}\label{eq:critical-sandwich}
 \begin{split}
 \Ac_\sigma(f)
 &\le\liminf_n\Ec_n(f;[-n/\sigma,n/\sigma])\\
 &\le\limsup_n\Ec_n(f;[-n/\sigma,n/\sigma])
 \le\Ac_{\sigma-}(f).
 \end{split}
\end{equation}
In particular, if $\Ac_{\sigma-}(f)=\Ac_\sigma(f)$, the critical-scale
limit identity holds with no margin.
\end{theorem}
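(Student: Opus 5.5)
The lower bound is the liminf inequality for margin $\gamma_n=0$, and the upper bound comes from approximating on $[-n/\sigma,n/\sigma]$ with entire competitors of smaller type $\tau<\sigma$, for which this interval is a fixed fraction $\rho=\tau/\sigma$ of the critical interval at type $\tau$.

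\emph{Lower bound.} I will not use the margin condition \eqref{eq:margin} in either liminf argument. In the polynomial-growth case, Corollary~\ref{cor:weighted-liminf} rests on Lemma~\ref{lem:compactness}, which needs only $R_n\to\infty$, $n/R_n\to\sigma$ and common growth bounds. With $R_n=n/\sigma$ these hold exactly, so I will rerun that argument verbatim. It gives $\Ac_\sigma(f)\le\liminf_n\Ec_n(f;[-n/\sigma,n/\sigma])$, including the infinite case. In the subexponential case with $\Ac_\sigma(f)<\infty$, I will repeat the proof of Lemma~\ref{lem:finite-liminf} with $R_n=n/\sigma$. The comparison polynomials $q_n$ of degree $N_n=\lceil(1+\varepsilon)n\rceil$ are approximants on a fraction of $[-N_n/\sigma,N_n/\sigma]$ strictly between $1/(1+\varepsilon)$ and $1$, which still covers $[-n/\sigma,n/\sigma]$. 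The Bernstein--Walsh step and the limit $\varepsilon\downarrow0$ are unchanged. I would check this carefully, since that lemma is stated only for sequences satisfying \eqref{eq:margin}.

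\emph{Upper bound.} If $\Ac_{\sigma-}(f)=\infty$ there is nothing to prove. Otherwise, fix $\tau<\sigma$ with $\Ac_\tau(f)<\infty$, and take an extremizer $F_\tau\in\Bc_\tau$ from Proposition~\ref{prop:attainment}. Since $F_\tau$ is within finite uniform distance of $f$, it inherits the real-axis growth of $f$ (polynomial or subexponential), so \eqref{eq:subexp-growth} holds for $F_\tau$ in both cases. Set $\rho=\tau/\sigma<1$. Then $[-n/\sigma,n/\sigma]=[-\rho n/\tau,\rho n/\tau]$, and Lemma~\ref{lem:fixedfraction} at type $\tau$ gives convex polynomials $q_n\in\Pn$ with $\norm{F_\tau-q_n}{[-n/\sigma,n/\sigma]}\le Ce^{-cn}$. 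The triangle inequality then yields $\limsup_n\Ec_n(f;[-n/\sigma,n/\sigma])\le\Ac_\tau(f)$. Letting $\tau\uparrow\sigma$ gives the bound $\Ac_{\sigma-}(f)$.

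\emph{Conclusion and main obstacle.} The final assertion follows from the sandwich. The only delicate point is the lower bound in the subexponential case, namely making sure the Bernstein--Walsh comparison in Lemma~\ref{lem:finite-liminf} survives at zero margin. It does, because the extra degree $\varepsilon n$ in $q_n$ supplies the fixed-fraction gap regardless of $\gamma_n$, and that lemma's proof never used $\gamma_n$.
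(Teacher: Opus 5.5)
Your proof is correct and follows the paper's argument. The lower bound is Corollary~\ref{cor:weighted-liminf} or Lemma~\ref{lem:finite-liminf} at zero margin, and the upper bound applies Lemma~\ref{lem:fixedfraction} at type $\tau$ with fraction $\tau/\sigma$ to a finite-error competitor before letting $\tau\uparrow\sigma$; you take an extremizer where the paper takes an infimum over competitors, and the obstacle you flag is not one, because $\gamma_n=0$ satisfies \eqref{eq:margin} ($0\le\gamma_n<n$, $\gamma_n=o(n)$), so both liminf results apply directly without rerunning their proofs.
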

\begin{proof}
The lower bound is Corollary~\ref{cor:weighted-liminf} in the polynomial
case, and Lemma~\ref{lem:finite-liminf} in the other case, with
$\gamma_n=0$. For every $0<\tau<\sigma$ with finite error, choose a
finite-error type-$\tau$ competitor. It inherits the target's
subexponential growth, and Lemma~\ref{lem:fixedfraction} applies at the
fixed fraction $\tau/\sigma$. Take the infimum over competitors and then
let $\tau\uparrow\sigma$. If all these errors are infinite, the upper
bound is automatic.
\end{proof}

\begin{corollary}[Possible exceptional types]\label{cor:exceptional-types}
For a fixed continuous convex subexponential target, on every interval
where $\Ac_\sigma(f)$ is finite, failure of the critical-scale limit
identity can occur only at a left jump of this nonincreasing function.
There are at most countably many such types. The same assertion holds
for polynomial-growth targets, with the infinite-error cases already
settled by Corollary~\ref{cor:weighted-liminf}.
\end{corollary}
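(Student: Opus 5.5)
The plan is to read Corollary~\ref{cor:exceptional-types} off Theorem~\ref{thm:critical}, combined with the fact that monotone functions have only countably many discontinuities.

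\emph{Subexponential case.} Fix a continuous convex $f$ satisfying \eqref{eq:subexp-growth}, and fix an interval $J$ of types on which $\Ac_\sigma(f)<\infty$. For each $\sigma\in J$ the hypotheses of Theorem~\ref{thm:critical} hold, so \eqref{eq:critical-sandwich} applies. Suppose the critical-scale identity $\lim_n\Ec_n(f;[-n/\sigma,n/\sigma])=\Ac_\sigma(f)$ fails at $\sigma$. The sandwich then forces $\Ac_\sigma(f)<\Ac_{\sigma-}(f)$, so $\sigma$ is a left jump of $\tau\mapsto\Ac_\tau(f)$. Here $\Ac_{\sigma-}(f)$ may be $+\infty$; that is still a strict jump, since $\Ac_\sigma(f)$ is finite. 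By Proposition~\ref{prop:right-continuity} the function is right-continuous, so left jumps are exactly its discontinuities.

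\emph{Counting.} On $J$ the function is nonincreasing and finite, so on each compact subinterval $[\sigma_1,\sigma_2]\subset J$ the jumps $\Ac_{\sigma-}-\Ac_\sigma$ at interior and right-end points are bounded. The jumps exceeding $1/k$ therefore number at most $k(\Ac_{\sigma_1}(f)-\Ac_{\sigma_2}(f))$. Exhausting $J$ by countably many compact subintervals and taking the union over $k$ shows there are at most countably many jumps. There is one subtlety: if $J$ has an infimum $\sigma_0$ with $\Ac_{\sigma_0}(f)<\infty$ and $\Ac_{\sigma_0-}(f)=\infty$, then $\sigma_0$ is a single extra point, which does not affect countability. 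Since finiteness is monotone in $\sigma$, the finiteness region is an interval $(\sigma^*,\infty)$ or $[\sigma^*,\infty)$, so this exhaustion is straightforward.

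\emph{Polynomial-growth case.} Theorem~\ref{thm:critical} needs no finiteness assumption here. At a type where $\Ac_\sigma(f)=\infty$, Corollary~\ref{cor:weighted-liminf} with $\gamma_n=0$ gives $\liminf_n\Ec_n=\infty$, so the identity holds in the extended sense. At types where $\Ac_\sigma(f)$ is finite, the argument above applies verbatim.

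\emph{Main obstacle.} The step needing most care is the bookkeeping when $\Ac_{\sigma-}(f)=\infty$ at the boundary of the finiteness interval, and making sure the extended-sense statement is consistent there. The rest is a direct consequence of the sandwich \eqref{eq:critical-sandwich}.
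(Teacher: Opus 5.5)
Your proposal is correct and essentially matches the paper's proof. The sandwich \eqref{eq:critical-sandwich} forces any failure to occur at a left jump, and a monotone function has countably many discontinuities on the interior of the finiteness interval. The possible left endpoint adds one point, and Corollary~\ref{cor:weighted-liminf} with $\gamma_n=0$ settles the infinite-error types in the polynomial-growth case.
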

\begin{proof}
The set of positive types with finite error is an interval, possibly
empty, with at most one left endpoint. On its interior, a monotone
real-valued function has at most countably many discontinuities.
Theorem~\ref{thm:critical} gives the assertion there; inclusion of the
possible endpoint preserves countability. In the polynomial-growth case,
Corollary~\ref{cor:weighted-liminf} settles every type with infinite error.
\end{proof}

\section{Further questions}\label{sec:questions}

For targets of a prescribed polynomial growth order, a natural problem
is to determine the smallest margins that guarantee \eqref{eq:recovery}.
The family \eqref{eq:fk} gives lower bounds depending on that order,
while \eqref{eq:poly-upper} gives sufficient bounds. Matching these
bounds would determine the dependence on the growth order and the
constants in the margin condition.

For stretched-exponential growth, the corresponding question is whether
the exponent $(1+\alpha)/(3-\alpha)$ in Theorem~\ref{thm:stretch} can be
reduced. More generally, one may seek necessary and sufficient
conditions on a margin sequence for recovery under a prescribed
subexponential envelope. Such a criterion would connect the quantitative
estimates with the distinction in Theorem~\ref{thm:dichotomy} between
a margin chosen for each target and one prescribed for the whole class.

For the power-function problem, further questions concern uniqueness of
the convex entire extremizer and strict comparison with the
unconstrained Bernstein constant when $\lambda\ge1$ is not an even
integer. The compactness of near-minimizers also suggests studying
conditions under which the full sequence converges to an extremizer.
Quantitative estimates for the positive constants and rates of
convergence in \eqref{eq:bernstein} would complement the limit theorem.

\appendix
\section{A weighted extension}\label{app:weights}

The argument of Section~\ref{sec:stretch} extends to more general growth
envelopes through a weighted estimate for entire functions. Let
$g:\R\to[1,\infty)$ be even, locally bounded,
measurable, and submultiplicative:
$g(x+y)\le g(x)g(y)$. Suppose that
\begin{equation}\label{eq:BD}
 \int_1^\infty\frac{\log g(t)}{t^2}\dd t<\infty.
\end{equation}
For such weights this is the one-dimensional integral form of the
Beurling--Domar condition. Define
\begin{equation}\label{eq:Sg}
 S_g(y)=\frac1\pi\int_{-\infty}^{\infty}
          \frac{\log g(t)}{t^2+\max\{1,y\}^2}\dd t,
 \qquad y\ge0.
\end{equation}
By \cite[Theorem~3.3]{BSSS2024}, applied with $p=\infty$, every
$F\in\B_\sigma$ with $F/g$ bounded on $\R$ satisfies
\begin{equation}\label{eq:weighted-strip}
 |F''(x+iy)|\le C_F g(x)
             e^{\sigma|y|+|y|S_g(|y|)}.
\end{equation}
The directional indicator in that theorem is at most $\sigma$.
For each fixed $y$, its essential-supremum estimate holds for almost
every real $u$. Given $x$, choose such points $u_j\to x$ with
$|u_j-x|\le1$. Submultiplicativity gives
\[
 g(u_j)\le g(x)\sup_{|v|\le1}g(v).
\]
Continuity of $F''$ gives \eqref{eq:weighted-strip}, after enlarging
$C_F$ by a factor depending only on $g$.

These weights have subexponential growth on $\R$. Indeed, if
$h=\log g$, then \eqref{eq:BD} and $h\ge0$ imply
$\liminf_{t\to\infty}h(t)/t=0$. Given $\varepsilon>0$, choose $a>0$
with $h(a)\le\varepsilon a$. For $x\ge0$, the decomposition
$x=ka+r$, where $k\ge0$ is an integer and $0\le r<a$, gives
\[
 h(x)\le k h(a)+h(r)
       \le\varepsilon x+\sup_{0\le u\le a}h(u).
\]
Evenness gives the same bound for $x<0$.

\begin{proposition}[A sufficient weighted criterion]\label{prop:weighted}
Assume \eqref{eq:margin} and \eqref{eq:BD}. For $t>0$ set
\[
 \begin{split}
 H_n(t)=\sup_{0\le\theta\le2\pi}\bigl\{
 &\log g(R_n\cos\theta\cosh t)+\sigma Y_n(\theta,t)\\
 &+Y_n(\theta,t)S_g(Y_n(\theta,t))\bigr\},\qquad
 Y_n(\theta,t)=R_n|\sin\theta|\sinh t.
 \end{split}
\]
If there exist $t_n>0$ such that
\begin{equation}\label{eq:weighted-criterion}
 (n-2)t_n-H_n(t_n)
       -\log\frac{R_n^2}{e^{t_n}-1}\longrightarrow\infty,
\end{equation}
then every $F\in\Bc_\sigma$ with $F/g$ bounded on $\R$ satisfies
$\Ec_n(F;[-R_n,R_n])\to0$. Consequently, every continuous convex $f$
with $|f|\le Cg$ and $\Ac_\sigma(f)<\infty$ satisfies
\eqref{eq:recovery}.
\end{proposition}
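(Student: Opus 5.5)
The argument follows the template of Propositions~\ref{prop:poly-upper} and \ref{prop:stretch-upper}, with the weighted estimate \eqref{eq:weighted-strip} in place of Lemma~\ref{lem:exactgrowth}. Fix $F\in\Bc_\sigma$ with $F/g$ bounded on $\R$, and parametrize $\mathcal E(R_n,t_n)$ by $z=x+iy$ with $x=R_n\cos\theta\cosh t_n$ and $|y|=Y_n(\theta,t_n)$. Then \eqref{eq:weighted-strip} gives
\[
 \log|F''(z)|\le\log C_F+\log g(x)+\sigma|y|+|y|S_g(|y|),
\]
so $\log\max_{\mathcal E(R_n,t_n)}|F''|\le\log C_F+H_n(t_n)$. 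Inserting this into \eqref{eq:ellipse-recovery} of Lemma~\ref{lem:repair} yields
\[
 \log\Ec_n(F;[-R_n,R_n])\le\log(2C_F)-\Bigl[(n-2)t_n-H_n(t_n)-\log\frac{R_n^2}{e^{t_n}-1}\Bigr],
\]
and \eqref{eq:weighted-criterion} makes the bracket tend to infinity. Hence the error tends to zero. The only checks in this step are that $F''$ is entire, so Chebyshev truncation applies on every ellipse, and that the constant $C_F$ does not depend on $n$. The latter holds because \eqref{eq:weighted-strip} is a single global bound.

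For the consequence, take continuous convex $f$ with $|f|\le Cg$ and $\Ac_\sigma(f)<\infty$. By Proposition~\ref{prop:attainment}, or simply by taking near-minimizers, choose $F\in\Bc_\sigma$ with $\norm{f-F}{\R}$ within $\varepsilon$ of $\Ac_\sigma(f)$. Then $|F|\le Cg+\norm{f-F}{\R}\le C'g$ because $g\ge1$, so $F/g$ is bounded and the first part applies to $F$. The triangle inequality gives
\[
 \limsup_n\Ec_n(f;[-R_n,R_n])\le\Ac_\sigma(f)+\varepsilon,
\]
and we let $\varepsilon\downarrow0$.

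For the reverse inequality, the weights have subexponential growth, as shown just before the proposition. So $f$ satisfies \eqref{eq:subexp-growth}, and Lemma~\ref{lem:finite-liminf} gives $\Ac_\sigma(f)\le\liminf_n\Ec_n(f;[-R_n,R_n])$ for every sequence \eqref{eq:margin}. Combining the two bounds proves \eqref{eq:recovery}.

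The main obstacle is the justification of \eqref{eq:weighted-strip} for $F''$ with a constant uniform in $z$: in particular, applying \cite[Theorem~3.3]{BSSS2024} to $F$ and passing to $F''$ without losing the coefficient $\sigma$. The text preceding the proposition already handles this. If it did not, I would first apply the cited theorem to $F$ itself and then use Cauchy's formula on unit circles. Submultiplicativity of $g$, together with the monotonicity of $yS_g(y)$ on bounded shifts, absorbs the unit displacement into constants. Everything after that is bookkeeping.
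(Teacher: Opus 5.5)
Your proof is correct and matches the paper's argument: bound $\max_{\mathcal E(R_n,t_n)}|F''|$ by $C_Fe^{H_n(t_n)}$ using \eqref{eq:weighted-strip} and insert this into \eqref{eq:ellipse-recovery}; then the triangle inequality (with $g\ge1$) gives the upper bound, and the subexponential growth of $g$ together with Lemma~\ref{lem:finite-liminf} gives the lower bound. In your fallback remark, the property you need is not monotonicity of $y\mapsto yS_g(y)$ but that it is Lipschitz (its derivative is bounded by $S_g(0)$), so it changes by a bounded amount under unit shifts.
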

\begin{proof}
The strip estimate \eqref{eq:weighted-strip} and
Lemma~\ref{lem:repair} give
\[
 \Ec_n(F;[-R_n,R_n])
 \le\frac{2C_F R_n^2}{e^{t_n}-1}
     \exp\bigl(H_n(t_n)-(n-2)t_n\bigr).
\]
The right-hand side tends to zero by \eqref{eq:weighted-criterion}.
If $\norm{f-F}{\R}<\infty$, then $|F|\le C'g$ on $\R$ because
$g\ge1$. The triangle inequality and the infimum over these
competitors give the upper bound in \eqref{eq:recovery}.
The subexponential bound for $g$ and Lemma~\ref{lem:finite-liminf}
give the lower bound.
\end{proof}

The weighted estimate also applies to
$\exp(a|x|/\log^2(e+|x|))$, which eventually exceeds every
$\exp(b|x|^\alpha)$ with fixed $a,b>0$ and $0<\alpha<1$.

\begin{proposition}[A logarithmic correction to exponential growth]\label{prop:near-exponential}
Let $f$ be continuous and convex, and suppose that, for some $C,a>0$,
\[
 |f(x)|\le C\exp\left(\frac{a|x|}{\log^2(e+|x|)}\right),
 \qquad \Ac_\sigma(f)<\infty.
\]
Under \eqref{eq:margin}, the condition
\begin{equation}\label{eq:near-exponential-margin}
 \frac{\gamma_n\log n}{n}\longrightarrow\infty
\end{equation}
is sufficient for \eqref{eq:recovery}. In particular,
$\gamma_n=n/\sqrt{\log n}$ satisfies this condition.
\end{proposition}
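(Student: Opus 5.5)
The plan is to apply Proposition~\ref{prop:weighted} to the weight $g(x)=\exp(h(|x|))$ with $h(t)=at/\log^2(e+t)$, choosing the same ellipse parameter $t_n=\arcosh(m/b)$ as in \eqref{eq:saddle-parameters}. Before doing so I would verify the hypotheses on $g$.
\begin{enumerate}
\item $g\ge1$, and $g$ is even, locally bounded and measurable.
\item $g$ is submultiplicative. Since $h(t)/t=a/\log^2(e+t)$ is nonincreasing on $[0,\infty)$, $h$ is subadditive there. Combined with evenness and monotonicity of $h$, this gives $h(|x+y|)\le h(|x|+|y|)\le h(|x|)+h(|y|)$.
\item Condition \eqref{eq:BD} holds, since $\int^\infty dt/(t\log^2t)<\infty$.
\end{enumerate}
Since $|f|\le Cg$ and $\Ac_\sigma(f)<\infty$, the proposition reduces the claim to verifying \eqref{eq:weighted-criterion}.

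Next I would estimate $S_g(y)$ for large $y$. I would split the integral in \eqref{eq:Sg} at $|t|=y$. The part $|t|\le y$ is at most $y^{-2}\int_0^y h\lesssim 1/\log^2 y$. The part $|t|>y$ is at most $\int_y^\infty a\,dt/(t\log^2t)\lesssim 1/\log y$. Hence $S_g(y)\le C/\log(e+y)$, and $Y S_g(Y)\le CY/\log(e+Y)$ for all $Y\ge0$. In $H_n(t_n)$ I would bound the three terms separately and crudely, without exploiting the $\theta$-tradeoff used in Section~\ref{sec:stretch}. With $U=R_n\cosh t_n=m/\sigma$ and $V=R_n\sinh t_n=\sqrt{m^2-b^2}/\sigma$, this gives
\[
 H_n(t_n)\le \frac{aU}{\log^2(e+U)}+\sigma V+\frac{CV}{\log(e+V)}.
\]
Since $(n-2)t_n-\sigma V=D_n$, the left side of \eqref{eq:weighted-criterion} is at least
\[
 D_n-C\frac{n}{\log^2n}-C\frac{V}{\log V}-O(\log n).
\]
The term $O(\log n)$ accounts for $\log(R_n^2/(e^{t_n}-1))$, because $t_n\asymp\sqrt{\gamma_n/n}$ is at least a negative power of $n$.

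Finally I would compare orders, using $D_n=(c_0+o(1))\gamma_n^{3/2}/\sqrt n$ and $V\asymp\sqrt{n\gamma_n}$ from the proof of Proposition~\ref{prop:poly-upper}. Each error term must be $o(\gamma_n^{3/2}/\sqrt n)$.
\begin{enumerate}
\item For $V/\log V$: the ratio $\sqrt{n\gamma_n}/\log n$ to $\gamma_n^{3/2}/\sqrt n$ is $n/(\gamma_n\log n)$, which tends to zero by \eqref{eq:near-exponential-margin}. This term is the one that dictates the condition.
\item For $n/\log^2 n$: we need $\gamma_n\gg n/(\log n)^{4/3}$, which is weaker than \eqref{eq:near-exponential-margin}.
\item For $\log n$: we have $\gamma_n^{3/2}/\sqrt n\gtrsim n/(\log n)^{3/2}\gg\log n$.
\end{enumerate}
Thus \eqref{eq:weighted-criterion} diverges, and recovery follows. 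For $\gamma_n=n/\sqrt{\log n}$ we have $\gamma_n\log n/n=\sqrt{\log n}\to\infty$, and $\gamma_n=o(n)$.

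I expect the main obstacle to be the asymptotic bound on $S_g$ with the correct power of $\log$, which decides whether the margin condition comes out as \eqref{eq:near-exponential-margin}. I would also take care that the approximation $D_n\sim c_0\gamma_n^{3/2}/\sqrt n$ from \eqref{eq:Psi-expansion} remains valid when $\gamma_n$ is as large as $n/\sqrt{\log n}$. This holds because only $\gamma_n/n\to0$ is needed.
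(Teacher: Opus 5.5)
Your proposal is correct and follows the paper's proof. Both use the same weight $g$, the ellipse parameter $t_n$ from \eqref{eq:saddle-parameters}, the bound $S_g(y)\le C/\log(e+y)$, and a crude termwise bound on $\max_{\mathcal E(R_n,t_n)}|F''|$. Both then compare $V/\log V$ and $n/\log^2 n$ with $D_n\sim c_0\gamma_n^{3/2}/\sqrt n$, and the $V/\log V$ term is what forces \eqref{eq:near-exponential-margin}.

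The differences are only in packaging. You invoke Proposition~\ref{prop:weighted} as a black box, whereas the paper applies \eqref{eq:weighted-strip}, \eqref{eq:ellipse-recovery} and Lemma~\ref{lem:finite-liminf} directly. Your bound $y^{-2}\int_0^y h\lesssim 1/\log^2 y$ still needs a short justification; the paper's extra split at $\sqrt y$ supplies it.
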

\begin{proof}
Let $g(x)=\exp(a|x|/\log^2(e+|x|))$.
The function $\omega(t)=t/\log^2(e+t)$ is increasing, and
$\omega(t)/t$ is decreasing. It is therefore subadditive on
$[0,\infty)$, so $g$ is submultiplicative on $\R$. Condition
\eqref{eq:BD} holds. Splitting the integral in \eqref{eq:Sg} at
$\sqrt y$ and $y$, for large $y$, gives
\begin{equation}\label{eq:Sg-log}
 S_g(y)\le\frac{C_a}{\log(e+y)}.
\end{equation}
The portions from $0$ to $\sqrt y$, from $\sqrt y$ to $y$, and from
$y$ to infinity are respectively $O(1/y)$, $O(1/\log^2y)$, and
$O(\int_y^\infty\dd t/(t\log^2t))=O(1/\log y)$.

Let $F\in\Bc_\sigma$ satisfy $\norm{f-F}{\R}<\infty$, and use
$t_n,D_n$ from \eqref{eq:saddle-parameters}. The ellipse
$\mathcal E(R_n,t_n)$ has semiaxes
\[
 U=\frac{n-2}{\sigma},\qquad
 V=\frac{\sqrt{(n-2)^2-(n-\gamma_n)^2}}\sigma.
\]
Equations \eqref{eq:weighted-strip} and \eqref{eq:Sg-log} imply
\[
 \log\max_{\mathcal E(R_n,t_n)}|F''|
 \le C+\sigma V+\frac{CU}{\log^2(e+U)}
                       +\frac{CV}{\log(e+V)}.
\]
By
\eqref{eq:near-exponential-margin}, $\delta_n\log n\to\infty$.
In particular $\log V\sim\log n$, and the positive terms beyond
$\sigma V$ are
\[
 O\left(\frac n{\log^2n}\right)
  +O\left(\frac{n\sqrt{\delta_n}}{\log n}\right)
       =o(n\delta_n^{3/2}).
\]
Since $D_n\sim c_0n\delta_n^{3/2}$ and
$n\delta_n^{3/2}/\log n\to\infty$, the estimate
\eqref{eq:ellipse-recovery} gives
$\Ec_n(F;[-R_n,R_n])\to0$.
Taking the infimum over competitors and applying
Lemma~\ref{lem:finite-liminf} proves the limit identity.
\end{proof}

More generally, for $g_\beta(x)=\exp\bigl(|x|/\log^\beta(e+|x|)\bigr)$, the integral
in \eqref{eq:BD} is finite precisely when $\beta>1$
\cite[Sect.~2]{BSSS2024}. For $\beta\le1$, Proposition~\ref{prop:weighted}
does not apply.

\section{Fixed fractions of the critical interval}\label{app:fractions}

For comparison with the shrinking-margin results, consider the assertion
\begin{equation}\label{eq:fixed-assertion}
 \limsup_{n\to\infty}
 \Ec_n(f;[-\rho n/\sigma,\rho n/\sigma])\le\Ac_\sigma(f),
 \qquad \rho>0.
\end{equation}
The distinction between unrestricted exponential growth and polynomial
real-axis growth is already visible at this fixed-fraction level. The
entire-function cutoff and the critical cosine obstruction are
classical; see \cite[Introduction]{Ganzburg2021}. We give proofs and
convex examples at the endpoints.

\subsection{Unrestricted exponential growth}

\begin{proposition}[A finite-difference lower bound]
\label{prop:finite-difference}
For $\tau,R>0$,
\begin{equation}\label{eq:finite-difference}
 E_n(e^{\tau x};[-R,R])
 \ge\left[\sinh\left(\frac{\tau R}{n+1}\right)\right]^{n+1}.
\end{equation}
Consequently $f(x)=e^{\sigma x}$ has $\Ac_\sigma(f)=0$ but
$\Ec_n(f;[-R_n,R_n])\to\infty$ for every sequence
\eqref{eq:margin}.
\end{proposition}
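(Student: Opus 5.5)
The lower bound \eqref{eq:finite-difference} comes from a finite-difference functional that annihilates $\Pn$. Set $h=2R/(n+1)$ and nodes $x_k=-R+kh$, $k=0,\dots,n+1$, which lie in $[-R,R]$. Define
\[
 \Lambda(g)=\sum_{k=0}^{n+1}(-1)^{n+1-k}\binom{n+1}{k}g(x_k),
\]
the $(n+1)$st forward difference of $g$ with step $h$. It vanishes on every polynomial of degree at most $n$. For any $p\in\Pn$ we then get
\[
 |\Lambda(e^{\tau x})|=|\Lambda(e^{\tau x}-p)|\le2^{n+1}\norm{e^{\tau x}-p}{[-R,R]}.
\]

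Now evaluate $\Lambda$ on the exponential. Since $e^{\tau(x+h)}=e^{\tau h}e^{\tau x}$, the difference factors:
\[
 \Lambda(e^{\tau x})=e^{-\tau R}(e^{\tau h}-1)^{n+1}
 =e^{-\tau R}e^{(n+1)\tau h/2}\bigl(2\sinh(\tau h/2)\bigr)^{n+1}.
\]
Because $(n+1)h/2=R$, the exponential prefactors cancel, leaving $2^{n+1}[\sinh(\tau R/(n+1))]^{n+1}$. Dividing by $2^{n+1}$ gives \eqref{eq:finite-difference}. There is a parity point to check: no sign cancellation can occur, since every term has the same sign structure and the result is an exact product.

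For the consequence, take $f(x)=e^{\sigma x}$. It lies in $\B_\sigma$ and satisfies $f''=\sigma^2e^{\sigma x}>0$, so $f\in\Bc_\sigma$ and $\Ac_\sigma(f)=0$. Apply the bound with $\tau=\sigma$ and $R=R_n=(n-\gamma_n)/\sigma$, together with $\Ec_n\ge E_n$. The argument of $\sinh$ is then $(n-\gamma_n)/(n+1)\to1$ because $\gamma_n=o(n)$. Hence $\sinh(\cdot)\to\sinh1>1$, and the lower bound behaves like $(\sinh 1-o(1))^{n+1}$. This tends to infinity exponentially, for every sequence satisfying \eqref{eq:margin}.

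The only delicate step is the final limit, which needs $\sinh 1\approx1.175>1$. This holds, and it holds with room to spare: the divergence persists for any interval $[-\rho n/\sigma,\rho n/\sigma]$ with $\sinh\rho>1$.
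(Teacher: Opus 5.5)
Your proof is correct and follows the paper's argument exactly: the $(n+1)$st forward difference on the equally spaced nodes $-R+kh$ with $h=2R/(n+1)$ has functional norm $2^{n+1}$ and annihilates $\Pn$, its closed-form value on the exponential is $[2\sinh(\tau R/(n+1))]^{n+1}$, and taking $\tau=\sigma$, $R=R_n$ gives a base tending to $\sinh 1>1$. One small correction: your ``parity point'' sentence is misleading, since the terms of the alternating sum do alternate in sign. What actually rules out cancellation is the exact product formula with $e^{\tau h}>1$, which you already have.
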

\begin{proof}
Let $m=n+1$ and $h=2R/m$. The $m$th difference on the nodes
$-R+jh$, $0\le j\le m$, annihilates every degree-$n$ polynomial, and its
norm as a functional is $2^m$. Applied to the exponential it equals
\[
 e^{-\tau R}(e^{\tau h}-1)^m
       =\left[2\sinh\left(\frac{\tau R}{m}\right)\right]^m.
\]
Divide by $2^m$. For $\tau=\sigma$ and
$R_n=(n-\gamma_n)/\sigma$, the base tends to $\sinh1>1$.
\end{proof}

The lower bound also applies when the target has type strictly below
the allowed value. For example, $f(x)=e^{0.95x}$ belongs to $\Bc_1$ with
strictly smaller type, yet its errors on $[-0.95n,0.95n]$ diverge by
\eqref{eq:finite-difference}, since $0.95^2>\arsinh1$.

Define
\begin{equation}\label{eq:rho0}
 \rho_0=\max_{t>0}\frac{t}{\cosh t}=0.662743419\ldots.
\end{equation}
The maximizing point is the unique $t_0>0$ satisfying
$t_0\tanh t_0=1$. The reciprocal $1/\rho_0=1.508879\ldots$ is the
classical Bernstein cutoff discussed in \cite[Introduction]{Ganzburg2021}.

\begin{lemma}[A positive coefficient lower bound]\label{lem:I-real}
For fixed $\rho>0$, with $\eta$ defined in \eqref{eq:eta}, there is
a constant $C_\rho>0$ such that
\begin{equation}\label{eq:I-real}
 I_{n+1}(\rho n)\ge C_\rho n^{-1}e^{n\eta(\rho)}
\end{equation}
for all sufficiently large $n$. Moreover
$\eta(\rho)<0$, $=0$, or $>0$ according as $\rho<\rho_0$, $=\rho_0$,
or $>\rho_0$.
\end{lemma}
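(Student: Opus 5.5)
The plan is to bound $I_{n+1}(\rho n)$ from below by a single term of its power series and then evaluate that term with Stirling's formula. For integer order $m$, every term of
\[
 I_m(x)=\sum_{k\ge0}\frac{(x/2)^{2k+m}}{k!\,(k+m)!}
\]
is positive when $x>0$. So $I_m(x)\ge T_k:=(x/2)^{2k+m}/(k!(k+m)!)$ for every $k$, and no uniform Debye expansion is needed. One largest term should carry the full exponential factor, with a loss of only about $n^{-1/2}$ compared with the true asymptotic size. That fits the weaker prefactor $n^{-1}$ allowed in \eqref{eq:I-real}.

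Set $m=n+1$, $x=\rho n$ and $z=x/m\to\rho$. Use the elementary Stirling upper bound $j!\le e\sqrt{j}\,(j/e)^j$ for $j\ge1$. Take $k=\lfloor s_*m\rfloor$ with $s_*=(\sqrt{1+z^2}-1)/2$, the maximizer of the continuous exponent
\[
 \phi(s)=(2s+1)\log\frac z2-s\log s-(s+1)\log(s+1)+2s+1 .
\]
Then $\log T_k\ge m\,\phi(k/m)-\log\bigl(e^2\sqrt{k(k+m)}\bigr)$. Setting $\phi'(s)=0$ gives $s(s+1)=z^2/4$, hence $s_*$ as above. A direct computation should then give $\phi(s_*)=\eta(z)$, using $\log\bigl(s_*/(s_*+1)\bigr)=2\log\bigl(z/(1+\sqrt{1+z^2})\bigr)$.

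Since $\phi'(s_*)=0$ and $\phi''$ is bounded near $s_*$ uniformly for $z$ near $\rho$, the rounding $|k/m-s_*|\le1/m$ costs only $O(1/m)$ in $m\phi$. The factor $\sqrt{k(k+m)}$ is $\asymp n$, and this produces the $n^{-1}$. It remains to replace $m\,\eta(\rho n/m)$ by $n\,\eta(\rho)$. Because $\eta$ is smooth at $\rho>0$ and $\rho n/m=\rho-\rho/m$, we get
\[
 m\,\eta(\rho n/m)=(n+1)\eta(\rho)-\rho\eta'(\rho)+o(1)=n\eta(\rho)+O_\rho(1),
\]
which gives \eqref{eq:I-real}. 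I expect this bookkeeping, which makes the rounding and index shift cost only bounded factors, to be the main technical step. It is routine because the first-order term vanishes at the saddle $s_*$.

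For the sign statement, note that $\eta'(z)=\sqrt{1+z^2}/z>0$, so $\eta$ is strictly increasing on $(0,\infty)$. Parametrize $z=1/\sinh u$ with $u>0$. Then $\sqrt{1+z^2}=\coth u$ and $z/(1+\sqrt{1+z^2})=1/(\sinh u+\cosh u)=e^{-u}$, so $\eta(1/\sinh u)=\coth u-u$. This vanishes exactly when $u\tanh u=1$, that is, at $u=t_0$. Finally, $\rho_0=t_0/\cosh t_0=\coth t_0/\cosh t_0=1/\sinh t_0$, using $t_0=\coth t_0$. Hence $\eta(\rho_0)=0$, and monotonicity gives the three sign cases.
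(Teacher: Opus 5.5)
Your proposal is correct and follows essentially the paper's route. You bound $I_{n+1}(\rho n)$ below by the single positive series term near the saddle $s(1+s)=z^2/4$, evaluate it with Stirling, and get the sign of $\eta$ through the parametrization $\rho=1/\sinh t$, which is the paper's identity $\eta(\rho)=\min_{t>0}(\rho\cosh t-t)$ evaluated at $t=\arsinh(1/\rho)$; the only difference is bookkeeping. The paper centers the index directly at $c=(\sqrt{1+\rho^2}-1)/2$ with $k=\lfloor cn\rfloor$, so the extra factor coming from $m=n+1$ cancels exactly and gives the sharp prefactor $1/(2\pi(1+c)n)$, whereas your non-asymptotic Stirling bound and Taylor shift from $z$ to $\rho$ give only some constant, which is enough for the lemma.
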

\begin{proof}
The series
\[
 I_m(R)=\sum_{k=0}^\infty
            \frac{(R/2)^{m+2k}}{k!\,(m+k)!}
\]
has positive terms. Let $c=(\sqrt{1+\rho^2}-1)/2$ and take
$k=\lfloor cn\rfloor$ in the series for $m=n+1$.
Stirling's formula, with $c(1+c)=\rho^2/4$, gives for this term
\[
 (1+O(n^{-1}))\frac{e^{n\eta(\rho)}}{2\pi(1+c)n}.
\]
This proves \eqref{eq:I-real}. Direct minimization gives
\[
 \eta(\rho)=\min_{t>0}(\rho\cosh t-t),
 \qquad t=\arsinh(1/\rho).
\]
Its sign is therefore negative precisely when
$\rho<t/\cosh t$ for some $t$, which proves the assertion about
\eqref{eq:rho0}.
\end{proof}

\begin{proposition}[The range for unrestricted growth]
\label{prop:unrestricted-fraction}
Assertion \eqref{eq:fixed-assertion} holds for every continuous convex
$f$ if and only if $0<\rho<\rho_0$. For the exponential target,
\begin{equation}\label{eq:exp-rate}
 \lim_{n\to\infty}\frac1n
 \log\Ec_n(e^{\sigma x};[-\rho n/\sigma,\rho n/\sigma])
       =\eta(\rho).
\end{equation}
At the boundary $\rho=\rho_0$, the convex target
\[
 f(x)=\bigl((\sigma x)^2+2\bigr)e^{\sigma x}
\]
has $\Ac_\sigma(f)=0$ and constrained errors bounded below by $c n$.
\end{proposition}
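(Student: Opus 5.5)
The plan has four parts. The sufficiency direction uses the ellipse recovery estimate \eqref{eq:ellipse-recovery} with a type-only growth bound. The exact rate \eqref{eq:exp-rate} combines the same upper bound with the coefficient functional $L_{m,R}$ of \eqref{eq:coefficient-functional} and Lemma~\ref{lem:I-real}. Necessity then follows from the sign of $\eta$. The boundary case uses a differentiated version of the positive series for $I_m$.

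\emph{Sufficiency for $0<\rho<\rho_0$.} If $\Ac_\sigma(f)=\infty$ there is nothing to prove. Otherwise take any $F\in\Bc_\sigma$ with $\norm{f-F}{\R}<\infty$. By Cauchy's formula and the definition of type, $|F''(z)|\le C_\varepsilon e^{(\sigma+\varepsilon)|z|}$. On $\mathcal E(R,t)$ with $R=\rho n/\sigma$ we have $|z|\le R\cosh t$. Hence \eqref{eq:ellipse-recovery} gives
\[
 \Ec_n(F;[-R,R])\le C n^2\exp\bigl(-(n-2)t+(1+\varepsilon/\sigma)\rho n\cosh t\bigr).
\]
Since $\rho<\rho_0$, we may fix $t$ with $\rho\cosh t<t$, and then choose $\varepsilon$ small enough that a strict exponential gap remains. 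So $\Ec_n(F;[-R,R])\to0$. The triangle inequality (adding $f-F$ to the convex polynomial) and the infimum over $F$ give \eqref{eq:fixed-assertion}.

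\emph{The rate \eqref{eq:exp-rate} and necessity.} For $F=e^{\sigma x}$ we have exactly $\max_{\mathcal E(R,t)}|F''|=\sigma^2e^{\rho n\cosh t}$. Taking $t=\arsinh(1/\rho)$ in the same estimate gives $\limsup\frac1n\log\Ec_n\le\min_t(\rho\cosh t-t)=\eta(\rho)$, using the identity from the proof of Lemma~\ref{lem:I-real}. For the lower bound, take $m=n+1$ and $R=\rho n/\sigma$. Then $L_{m,R}$ has norm at most one and annihilates $\Pn$, so $\Ec_n\ge E_n\ge|L_{m,R}(e^{\sigma x})|$. The integral formula gives $L_{m,R}(e^{\sigma x})=I_{n+1}(\rho n)$, and \eqref{eq:I-real} yields $\liminf\frac1n\log\Ec_n\ge\eta(\rho)$. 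For $\rho>\rho_0$ we have $\eta(\rho)>0$, so the errors for $e^{\sigma x}$ diverge although $\Ac_\sigma=0$; thus \eqref{eq:fixed-assertion} fails. The case $\rho=\rho_0$ is handled by the boundary example below.

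\emph{Boundary example.} Write $u=\sigma x$. Since $\frac{d^2}{du^2}\bigl[(u^2+2)e^u\bigr]=(u+2)^2e^u\ge0$, the target $f$ is convex and entire of type $\sigma$, so $\Ac_\sigma(f)=0$. For the lower bound, apply $L_{m,R}$ with $m=n+1$, $R=\rho_0n/\sigma$, and set $x_0=\rho_0n$. This gives
\[
 L_{m,R}(f)=x_0^2I_m''(x_0)+2I_m(x_0).
\]
Differentiate the positive series for $I_m$ termwise: every term of $I_m''$ is nonnegative for $m\ge2$. Keep only the term $k=\lfloor cn\rfloor$ used in Lemma~\ref{lem:I-real}. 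Its second derivative equals that term times $(m+2k)(m+2k-1)/x_0^2$, and this factor is bounded below by a positive constant since $m+2k\asymp n\asymp x_0$. At $\rho_0$ we have $\eta(\rho_0)=0$, so the term itself is $\ge C/n$. Therefore $x_0^2I_m''(x_0)\ge c\,n^2\cdot n^{-1}=cn$. The remaining term $2I_m$ is positive, so $\Ec_n\ge E_n\ge cn$.

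The main obstacle is the boundary step. One has to check that the $k$th term's second derivative really keeps a positive constant factor, and that Stirling's estimate at $\rho_0$ gives the $n^{-1}$ size uniformly. Positivity of the series avoids any cancellation, so I expect this to go through. The only other delicacy is keeping the exponential gap strict in the sufficiency step after adding $\varepsilon$ to the type.
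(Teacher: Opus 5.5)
Your proposal is correct and follows the paper's proof essentially step by step. The three ingredients are the same as the paper's: the ellipse recovery bound with a type-$(\sigma+\varepsilon)$ estimate for $\rho<\rho_0$; the choice $t=\arsinh(1/\rho)$ together with $L_{n+1,R}(e^{\sigma x})=I_{n+1}(\rho n)$ for \eqref{eq:exp-rate}; and a positivity argument through the coefficient functional at $\rho_0$.

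The only difference is cosmetic and occurs at the boundary. The paper writes $x_0^2I_m''$ as $\tfrac{x_0^2}{4}(I_{m-2}+2I_m+I_{m+2})$, drops to $x_0^2I_m/2$, and then cites Lemma~\ref{lem:I-real} directly. You instead keep a single differentiated series term taken from that lemma's Stirling estimate. You could equally note that $I_m''(x_0)\ge m(m-1)x_0^{-2}I_m(x_0)$ termwise and use the lemma's statement.
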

\begin{proof}
After rescaling, assume $\sigma=1$. For $\rho<\rho_0$, choose $t>0$
and $\varepsilon>0$ such that $(1+\varepsilon)\rho\cosh t<t$.
Every $F\in\Bc_1$ satisfies
$|F''(z)|\le C_\varepsilon e^{(1+\varepsilon)|z|}$, so
Lemma~\ref{lem:repair}, with $R=\rho n$, gives
$\Ec_n(F;[-\rho n,\rho n])\le Cn^2e^{-cn}$ for some $c>0$.
The triangle inequality and the infimum over all $F\in\Bc_1$ with
$\norm{f-F}{\R}<\infty$ prove \eqref{eq:fixed-assertion} when
$\Ac_1(f)<\infty$. The assertion is immediate when $\Ac_1(f)=\infty$.

For every integer $m\ge1$, the functional in
\eqref{eq:coefficient-functional} satisfies $L_{m,R}(e^x)=I_m(R)$.
Taking $m=n+1$ and applying Lemma~\ref{lem:I-real} gives the lower
logarithmic rate in \eqref{eq:exp-rate}.
For the upper rate use $F''=e^z$, whose ellipse maximum is
$e^{R\cosh t}$, and take $t=\arsinh(1/\rho)$ in
\eqref{eq:ellipse-recovery}. The polynomial prefactor is of order
$n^2$, and the exponent is $n\eta(\rho)+O(1)$. This proves
\eqref{eq:exp-rate}, and excludes every $\rho>\rho_0$.

At $\rho=\rho_0$, take $f(x)=(x^2+2)e^x$; its second derivative is
$e^x(x+2)^2\ge0$, and $\Ac_1(f)=0$. For $R=\rho_0n$ and $m=n+1\ge3$,
the degree-$m$ Chebyshev coefficient of $f(Ry)$ is
\[
 R^2\left(I_m(R)+\frac{I_{m-2}(R)+I_{m+2}(R)}2\right)+4I_m(R).
\]
All terms are positive. The coefficient functional has norm at most two
in this normalization, so the approximation error is at least
$R^2I_m(R)/2\ge c n$, by \eqref{eq:I-real} and $\eta(\rho_0)=0$.
This excludes the boundary. Scaling $x$ to $\sigma x$ completes the proof.
\end{proof}

\subsection{Polynomial growth and the critical cosine estimate}
Critical-scale approximation of $\cos x$ gives a classical obstruction;
see \cite[Introduction]{Ganzburg2021}. The following proof gives the
uniform positive lower bound needed for the convex example below.

\begin{lemma}[Cosine at critical scale]\label{lem:cos-critical}
There exists $c_*>0$ such that
\begin{equation}\label{eq:cos-critical}
 E_n(\cos x;[-n,n])\ge c_*
\end{equation}
for all sufficiently large $n$.
\end{lemma}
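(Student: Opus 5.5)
The plan is to prove the bound by duality, using a functional that annihilates $\Pn$ and averages many Chebyshev coefficients of $\cos x$. A single coefficient is not enough. The functional $L$ from Proposition~\ref{prop:amplified}, with $R=n$ and $m$ the smallest even integer above $n$, gives $E_n(\cos x;[-n,n])\ge|J_m(n)|$. Here we are in the transition region $m-n=O(1)$, and the Airy asymptotics give $J_m(n)\asymp m^{-1/3}$, which tends to zero. The remedy is to average about $n^{1/3}$ neighbouring coefficients, which all have the same sign. The averaging kernel must have bounded $L^1$ norm, so that the norm of the functional stays bounded.

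\emph{Construction of the functional.} Fix a small $\kappa>0$, set $N=\lfloor\kappa n^{1/3}\rfloor$, and let $m_0$ be an even integer with $m_0-2N>n$ and $m_0\le n+2N+2$. Let $K_N(\phi)=\sum_{|j|<N}(1-|j|/N)e^{ij\phi}$ be the Fejér kernel, which is nonnegative with normalized $L^1$ norm one. Define
\[
 \Lambda(g)=\frac1{2\pi}\int_{-\pi}^{\pi}g(n\sin\theta)\,e^{im_0\theta}K_N(2\theta)\dd\theta .
\]
Its kernel has $L^1$ norm one, so $\|\Lambda\|\le1$. It contains only the frequencies $m_0+2j$ with $|j|<N$, all larger than $n$. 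Since $p(n\sin\theta)$ is a trigonometric polynomial of degree at most $n$, $\Lambda$ annihilates $\Pn$. By the definition \eqref{eq:J-integral} and evenness of each index,
\[
 \Lambda(\cos)=\sum_{|j|<N}\Bigl(1-\frac{|j|}N\Bigr)J_{m_0+2j}(n),
\]
where the sine part vanishes by parity. Thus $E_n(\cos x;[-n,n])\ge|\Lambda(\cos)|$.

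\emph{Lower bound for the sum.} All indices $\mu=m_0+2j$ satisfy $0<\mu-n\le 4N+2\le 5\kappa n^{1/3}$. In the transition region I would use the uniform Airy approximation (DLMF~10.19.8 or 10.20):
\[
 J_\mu(\mu-s\mu^{1/3})=2^{1/3}\mu^{-1/3}\,\mathrm{Ai}(-2^{1/3}s)+O(\mu^{-1}),
\]
uniformly for $s$ in compact sets. For our indices the parameter $s\approx(\mu-n)\mu^{-1/3}$ lies in $[0,5\kappa]$. Choosing $\kappa$ small keeps the Airy argument in $[-6\kappa,0]$, where $\mathrm{Ai}\ge\mathrm{Ai}(0)/2>0$. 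Hence every term satisfies $J_\mu(n)\ge c\,n^{-1/3}$. The Fejér weights sum to $N$, so $\Lambda(\cos)\ge cNn^{-1/3}\ge c\kappa/2=:c_*$ for large $n$.

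The main obstacle is this uniform positivity of $J_\mu(n)$ across the whole window, including control of the error term uniformly in $\mu$. If one prefers to avoid the Airy expansion, a fallback is a direct steepest-descent computation of \eqref{eq:J-integral} at $R=n$ with the cubic phase $\mu(\theta-\sin\theta)+(\mu-n)\sin\theta\approx \mu\theta^3/6+(\mu-n)\theta$. After the scaling $\theta=\mu^{-1/3}v$, this gives $J_\mu(n)\sim\mu^{-1/3}\frac1{2\pi}\int\cos(v^3/6+sv)\dd v$. This reproduces the positive Airy value and has dominated tails, in the same spirit as the proof of Lemma~\ref{lem:J-saddle}.
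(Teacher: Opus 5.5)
Your proposal is correct and is essentially the paper's proof: a Fej\'er-weighted functional with all frequencies above $n$ averages about $n^{1/3}$ even-index coefficients $J_\mu(n)$ with $0<\mu-n=O(n^{1/3})$, each at least $c\,n^{-1/3}$. The only differences are in how this bound is obtained. You import the uniform transition-region asymptotic (DLMF~10.19.8), whereas the paper proves $n^{1/3}J_m(n)\to\mathcal A(s)$ directly and controls the oscillatory tails by integration by parts; a dominated-convergence argument, as in your fallback, would first need a slight contour shift. There is also one harmless slip: since $n=\mu-s\mu^{1/3}$ lies below the turning point, the leading term is $2^{1/3}\mu^{-1/3}\mathrm{Ai}(2^{1/3}s)$, consistent with $\mathcal A(s)=2^{1/3}\mathrm{Ai}(2^{1/3}s)$, rather than $\mathrm{Ai}(-2^{1/3}s)$, but $\mathrm{Ai}$ is positive near $0$ on both sides, so your small-$\kappa$ choice works either way.
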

\begin{proof}
We first estimate $J_m(n)$ for $m-n=O(n^{1/3})$. For $m\ge n$, let
$s=(m-n)/n^{1/3}$. From \eqref{eq:J-integral},
\[
 n^{1/3}J_m(n)
 =\frac1\pi\int_0^{\pi n^{1/3}}
 \cos\left(su+n\left(\frac{u}{n^{1/3}}
                       -\sin\frac{u}{n^{1/3}}\right)\right)\dd u.
\]
Uniformly for $s$ in a compact subset of $[0,\infty)$, this tends to
\[
 \mathcal A(s)=\frac1\pi\int_0^\infty
                                  \cos(su+u^3/6)\dd u.
\]
Indeed, on every finite interval Taylor's formula gives uniform
convergence. For the tail starting at $T$, the phase derivative is
increasing and at least $2u^2/\pi^2$; integration by parts bounds the
finite-$n$ tail by $\pi^2/T^2$. The limiting tail is at most $4/T^2$.
This proves uniform convergence and continuity of $\mathcal A$.
Furthermore
\[
 \mathcal A(0)=\frac{6^{1/3}}{3\pi}\Gamma(1/3)\cos(\pi/6)>0.
\]
Choose $\delta>0$ so small that, for all large $n$,
\begin{equation}\label{eq:J-positive-band}
 J_m(n)\ge\frac{\mathcal A(0)}{4n^{1/3}}
 \quad\text{whenever }n\le m\le n+\delta n^{1/3}.
\end{equation}

Let
\[
 L=\left\lfloor\frac{\delta n^{1/3}}8\right\rfloor,
 \qquad M=\left\lceil\frac n2+\frac{\delta n^{1/3}}4\right\rceil.
\]
The Fej\'er kernel
\[
 \mathcal F_L(u)=\sum_{|j|<L}\left(1-\frac{|j|}{L}\right)e^{iju}
              =\frac1L\left(\frac{\sin(Lu/2)}{\sin(u/2)}\right)^2
\]
is nonnegative and has normalized integral one. Set
$K_n(\theta)=\mathcal F_L(2\theta)\cos(2M\theta)$.
Its normalized $L_1[-\pi,\pi]$ norm is at most one, and all its Fourier
frequencies have absolute value greater than $n$, since
$2(M-L+1)>n$. Thus it annihilates $p(n\sin\theta)$ for $p\in\Pn$.
The Fourier expansion is
\[
 \cos(n\sin\theta)=\sum_{k\in\mathbb Z}
                         J_{2k}(n)e^{2ik\theta}.
\]
Expanding $K_n$ and using the symmetry of the Fej\'er weights together
with $J_{-2k}(n)=J_{2k}(n)$ gives
\begin{align*}
 \norm{\cos-p}{[-n,n]}
 &\ge\left|\frac1{2\pi}\int_{-\pi}^\pi
                 \cos(n\sin\theta)K_n(\theta)\dd\theta\right|\\
 &=\sum_{|j|<L}\left(1-\frac{|j|}{L}\right)J_{2(M+j)}(n)
 \ge\frac{\mathcal A(0)L}{4n^{1/3}}.
\end{align*}
All indices in the sum lie in the range
\eqref{eq:J-positive-band} for large $n$, and the weights sum to $L$.
Since $L/n^{1/3}\to\delta/8$, taking the infimum over $p$ proves
\eqref{eq:cos-critical}.
\end{proof}

\begin{proposition}[The range for polynomial growth]
\label{prop:polynomial-fraction}
Restricted to continuous convex targets of polynomial real-axis growth,
assertion \eqref{eq:fixed-assertion} holds for every such target if and
only if $0<\rho<1$. Failure at $\rho=1$ is witnessed by
\[
 f(x)=(\sigma x)^2+\cos(\sigma x),\qquad
 f''(x)\ge\sigma^2,\qquad \Ac_\sigma(f)=0.
\]
\end{proposition}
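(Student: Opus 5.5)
The proof has two parts: sufficiency of $0<\rho<1$, and failure of \eqref{eq:fixed-assertion} at and beyond $\rho=1$ for a single explicit target.

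\textbf{Sufficiency for $0<\rho<1$.} Fix such $\rho$ and a convex $f$ of polynomial growth. If $\Ac_\sigma(f)=\infty$, the assertion is immediate. Otherwise, take any $F\in\Bc_\sigma$ with $\norm{f-F}{\R}<\infty$. Since $f$ has polynomial growth, so does $F$. Lemma~\ref{lem:fixedfraction} applies, because polynomial growth implies \eqref{eq:subexp-growth}, and it gives convex polynomials $q_n$ with
\[
\norm{F-q_n}{[-\rho n/\sigma,\rho n/\sigma]}\le Ce^{-cn}.
\]
The triangle inequality then bounds the limsup by $\norm{f-F}{\R}$. Taking the infimum over competitors gives \eqref{eq:fixed-assertion}.

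\textbf{Necessity at and beyond $\rho=1$.} After rescaling $x\mapsto\sigma x$, I may assume $\sigma=1$; the errors scale by $\sigma^{-2}$ after dividing $f$ by $\sigma^2$, and this does not affect divergence or a positive lower bound. Take
\[
f(x)=x^2+\cos x,\qquad f''=2-\cos x\ge1 .
\]
This $f$ is entire of type one, so $\Ac_1(f)=0$. For $n\ge2$, subtracting $x^2$ leaves unconstrained degree-$n$ errors unchanged. Since $\Ec_n\ge E_n$, we get
\[
\Ec_n(f;[-n,n])\ge E_n(\cos x;[-n,n])\ge c_*
\]
by Lemma~\ref{lem:cos-critical}. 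Hence the limsup is at least $c_*>0=\Ac_1(f)$, and \eqref{eq:fixed-assertion} fails at $\rho=1$.

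For $\rho>1$, the interval $[-\rho n,\rho n]$ contains $[-n,n]$, so the errors are monotone in the interval and the same lower bound $c_*$ applies. This settles every $\rho\ge1$.

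\textbf{Main obstacle.} The only delicate point is that the failure witness must be convex and of polynomial growth with zero entire-function error. This is exactly what the quadratic term provides. All the analytic work is contained in Lemma~\ref{lem:cos-critical}, so there is no new obstacle once that lemma is taken as given.
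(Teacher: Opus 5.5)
Your proposal is correct and is essentially the paper's proof. For $0<\rho<1$ you apply Lemma~\ref{lem:fixedfraction} to finite-error competitors, and for $\rho\ge1$ you use the chain $\Ec_n(f)\ge E_n(\cos x;[-\rho n,\rho n])\ge E_n(\cos x;[-n,n])\ge c_*$, obtained by subtracting the quadratic and invoking Lemma~\ref{lem:cos-critical}; one small slip is that for the displayed witness $(\sigma x)^2+\cos(\sigma x)$ the substitution $x\mapsto x/\sigma$ leaves the errors unchanged, so no $\sigma^{-2}$ factor arises, though as you note this is harmless.
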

\begin{proof}
The positive assertion follows from
Lemma~\ref{lem:fixedfraction} applied to finite-error competitors. For
the displayed target and $n\ge2$, subtraction of the quadratic term is
a bijection of $\Pn$. Therefore, after scaling,
\[
 \Ec_n(f;[-\rho n/\sigma,\rho n/\sigma])
 \ge E_n(\cos x;[-\rho n,\rho n])
 \ge E_n(\cos x;[-n,n])\ge c_*
\]
whenever $\rho\ge1$. This proves the sharpness of the range.
\end{proof}

\small
\section*{Declarations}

\noindent\textbf{Author contributions.}
Both authors contributed equally to the paper.

\vspace{0.5mm}
\noindent\textbf{Funding declaration.}
CD was funded by an EPSRC post doctoral award.

\vspace{0.5mm}
\noindent\textbf{Competing interests.}
The authors have no competing interests to declare.

\vspace{0.5mm}
\noindent\textbf{Data availability.}
No datasets were generated or analysed in this theoretical study.

\vspace{0.5mm}
\noindent\textbf{AI declaration.}
The authors formulated the research questions, developed the main mathematical ideas, and directed the investigation. Discussions with ChatGPT 6 contributed suggestions for proof strategies and intermediate arguments, and assisted in identifying gaps and exploring counterexamples. The authors critically evaluated these suggestions, developed and verified the proofs. The authors take responsibility for the mathematical results and the final manuscript.


\begin{thebibliography}{99}
\setlength{\itemsep}{0pt}

\bibitem{BSSS2024}
Berger, D., Schilling, R.L., Shargorodsky, E., Sharia, T.:
\href{https://doi.org/10.4171/JST/509}{An extension of the Liouville theorem
for Fourier multipliers to sub-exponentially growing solutions}.
J. Spectr. Theory \textbf{14}(2), 665--695 (2024)

\bibitem{Bernstein1914}
Bernstein, S.N.:
\href{https://doi.org/10.1007/BF02401828}{Sur la meilleure approximation de
$|x|$ par des polyn\^omes de degr\'es donn\'es}.
Acta Math. \textbf{37}, 1--57 (1914)

\bibitem{Bernstein1938}
Bernstein, S.N.:
\href{https://www.mathnet.ru/eng/im3513}{Sur la meilleure approximation de
$|x|^p$ par des polyn\^omes de degr\'es tr\`es \'elev\'es}.
Izv. Akad. Nauk SSSR Ser. Mat. \textbf{2}(2), 169--190 (1938)

\bibitem{Boas1954}
Boas, R.P., Jr.: Entire Functions. Pure and Applied Mathematics,
vol.~5. Academic Press, New York (1954)

\bibitem{FoucartPowers2017}
Foucart, S., Powers, V.:
\href{https://doi.org/10.1093/imanum/drw014}{Basc: constrained approximation
by semidefinite programming}.
IMA J. Numer. Anal. \textbf{37}(2), 1066--1085 (2017)

\bibitem{Gal2009}
Gal, S.G.:
\href{https://doi.org/10.1142/7426}{Approximation by Complex Bernstein and
Convolution Type Operators}. Series on Concrete and Applicable
Mathematics, vol.~8. World Scientific, Hackensack (2009)

\bibitem{Gal2016}
Gal, S.G.:
\href{https://arxiv.org/abs/1505.02177v2}{On the Bernstein's constant in
convex approximation}. arXiv:1505.02177v2 (2016)

\bibitem{Ganzburg1991}
Ganzburg, M.I.:
\href{https://doi.org/10.1007/BF01060838}{Limit theorems for the best
polynomial approximations in the $L_\infty$ metric}.
Ukrainian Math. J. \textbf{43}(3), 299--305 (1991)

\bibitem{Ganzburg2008}
Ganzburg, M.I.:
\href{https://doi.org/10.1090/memo/0897}{Limit Theorems of Polynomial
Approximation with Exponential Weights}.
Mem. Amer. Math. Soc. \textbf{192}, no.~897 (2008)

\bibitem{Ganzburg2021}
Ganzburg, M.I.:
\href{https://doi.org/10.1007/s40315-021-00364-x}{Asymptotic behaviour of
the error of polynomial approximation of functions like
$|x|^{\alpha+i\beta}$}.
Comput. Methods Funct. Theory \textbf{21}(1), 73--94 (2021)

\bibitem{Ganzburg2023}
Ganzburg, M.I.:
\href{https://doi.org/10.1007/s10476-022-0185-z}{Sharp constants of
approximation theory. IV. Asymptotic relations in general settings}.
Anal. Math. \textbf{49}(1), 79--136 (2023)

\bibitem{Ganzburg2025}
Ganzburg, M.I.:
\href{https://doi.org/10.1016/j.jmaa.2025.129510}{Discretization theorems
for entire functions of exponential type}.
J. Math. Anal. Appl. \textbf{550}(1), article 129510 (2025)

\bibitem{KLPS2011}
Kopotun, K.A., Leviatan, D., Prymak, A., Shevchuk, I.A.:
\href{https://pages.cs.wisc.edu/~deboor/sat/papers/16/}{Uniform and
pointwise shape preserving approximation by algebraic polynomials}.
Surv. Approx. Theory \textbf{6}, 24--74 (2011)

\bibitem{KLS2005}
Kopotun, K.A., Leviatan, D., Shevchuk, I.A.:
\href{https://doi.org/10.4153/CJM-2005-049-6}{Convex polynomial
approximation in the uniform norm: conclusion}.
Canad. J. Math. \textbf{57}(6), 1224--1248 (2005)

\bibitem{KLS2006}
Kopotun, K.A., Leviatan, D., Shevchuk, I.A.:
\href{https://doi.org/10.1007/s10474-006-0010-3}{Coconvex approximation in
the uniform norm: the final frontier}.
Acta Math. Hungar. \textbf{110}(1--2), 117--151 (2006)

\bibitem{KLS2019}
Kopotun, K.A., Leviatan, D., Shevchuk, I.A.:
\href{https://doi.org/10.5802/smai-jcm.54}{Uniform and pointwise shape
preserving approximation (SPA) by algebraic polynomials: an update}.
SMAI J. Comput. Math. \textbf{S5}, 99--108 (2019)

\bibitem{Lubinsky2007}
Lubinsky, D.S.:
\href{https://doi.org/10.1007/s00365-005-0626-5}{On the Bernstein constants
of polynomial approximation}.
Constr. Approx. \textbf{25}(3), 303--366 (2007)

\bibitem{DLMF2026}
Olver, F.W.J., Olde Daalhuis, A.B., Lozier, D.W., Schneider, B.I.,
Boisvert, R.F., Clark, C.W., Miller, B.R., Saunders, B.V., Cohl, H.S.,
McClain, M.A. (eds.):
\href{https://dlmf.nist.gov/}{NIST Digital Library of Mathematical
Functions}. Release 1.2.8 of 15 September 2026

\bibitem{Shisha1965}
Shisha, O.:
\href{https://doi.org/10.2140/pjm.1965.15.667}{Monotone
approximation}. Pacific J. Math. \textbf{15}(2), 667--671 (1965)

\bibitem{Trefethen2019}
Trefethen, L.N.:
\href{https://doi.org/10.1137/1.9781611975949}{Approximation Theory and
Approximation Practice, Extended Edition}. SIAM, Philadelphia (2019)

\end{thebibliography}
\end{document}